\let\subset\subseteq 
\let\eps\varepsilon
\let\rho\varrho
\def\dcup{\dot\cup}
\def\neighbor{\mathrm{N}}
\def\parent{\mathrm{Par}}
\def\Szemeredi{Szemer\'edi}
\def\Lovasz{Lov\'asz}
\newcounter{main}
\newcounter{bipfar}
\newtheorem{theorem}{Theorem}
\newtheorem{lemma}[theorem] {Lemma}   
\newtheorem{conjecture}[theorem] {Conjecture}
\theoremstyle{remark} 
\newtheorem{remark}[theorem] {Remark} 
\newtheorem{AuxiliaryMain}{Claim}[main]
\newtheorem{AuxiliaryBipfar}{Claim}[bipfar]
\newtheorem{AuxiliaryCl}{Claim}[theorem]
\newcommand{\By}[2]{\overset{\mbox{\tiny{#1}}}{#2}}
\newcommand{\ByRef}[2]{   \By{\eqref{#1}}{#2} }
\newcommand{\gByRef}[1]{  \ByRef{#1}{>} }
\newcommand{\leByRef}[1]{ \ByRef{#1}{\le} }
\def\Aut{\mathrm{Aut}} 
\def\fvc{\tau^*} 
\renewcommand{\leq}{\leqslant}
\renewcommand{\le}{\leqslant}
\renewcommand{\geq}{\geqslant}
\renewcommand{\ge}{\geqslant}
\renewcommand{\epsilon}{\varepsilon}
\let\oldmarginpar\marginpar
\renewcommand\marginpar[1]{\-\oldmarginpar[\raggedleft\footnotesize #1]%
{\raggedright\footnotesize #1}}
\def\COMMENT#1{}
\let\COMMENT=\footnote
\title{Hamilton cycles in dense vertex-transitive graphs}
\author{Demetres Christofides}
\address{School of Sciences,
       UCLan Cyprus,
       12--14 University Avenue, 
       Pyla, 7080 Larnaka, Cyprus}
\email{dchristofides@uclan.ac.uk}       
\author{Jan Hladk\'y}
\address{DIMAP and Department of Computer Science,
       University of Warwick,
       Coventry, CV4~7AL, UK}
\email{honzahladky@gmail.com}       
\author{Andr\'as M\'ath\'e}
\address{Mathematics Institute,
       University of Warwick,
       Coventry, CV4~7AL, UK}
\email{A.Mathe@warwick.ac.uk}       
\thanks{Research of DC and JH is supported by DIMAP,
EPSRC award EP/D063191/1. AM is supported by a Leverhulme Trust Early Career Fellowship. JH is an EPSRC Fellow.}
\subjclass[2000]{05C45; 05C25}
\keywords{Hamilton cycles; Lov\'asz conjecture, Regularity Lemma}
\date{\today}
\begin{document}

\begin{abstract}
A famous conjecture of Lov\'asz states that every connected vertex-transitive graph contains a Hamilton path. In this article we confirm the conjecture in the case that the graph is dense and sufficiently large. In fact, we show that such graphs contain a Hamilton cycle and moreover we provide a polynomial time algorithm for finding such a cycle. 
\end{abstract}
\maketitle


\section{Introduction}

The decision problems of whether a graph contains a Hamilton cycle or a
Hamilton path are two of the most famous NP-complete problems, and so it is
unlikely that there exist good characterizations of such graphs. For this
reason, it is natural to ask for sufficient conditions which ensure the existence
of a Hamilton cycle or a Hamilton path. To this direction, the following
well-known conjecture of \Lovasz\ is still wide open.

\begin{conjecture}\label{conj:main}
Every connected vertex-transitive graph has a Hamilton path.
\end{conjecture}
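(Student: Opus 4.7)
The plan is a density dichotomy: fix the constant $\alpha>0$ supplied by the main theorem of this paper and split the connected vertex-transitive graph $G$ (common degree $d$ on $n$ vertices) into the regimes $d\ge\alpha n$ and $d<\alpha n$. In the dense regime the paper already delivers a Hamilton \emph{cycle}, so the remaining task is to produce a Hamilton path in every connected vertex-transitive graph of sublinear degree. I would further separate the Cayley and non-Cayley cases: only a handful of connected non-Cayley vertex-transitive graphs without a Hamilton cycle are known (the Petersen graph, the Coxeter graph, and their truncations), and one hopes that analysing the block system of $\Aut(G)$ together with induction on $|V(G)|$ reduces non-Cayley examples to Cayley examples on a quotient.

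For Cayley graphs $\mathrm{Cay}(\Gamma,S)$ with $|S|=o(|\Gamma|)$ I would induct on $|\Gamma|$ through the normal subgroup lattice. When $\Gamma$ has a proper normal subgroup $N$ with $|\Gamma/N|$ large, one attempts to lift a Hamilton cycle of $\mathrm{Cay}(\Gamma/N,\overline{S})$ to $\Gamma$ by patching together Hamilton paths on $N$-cosets---a construction classical for abelian $\Gamma$ (Chen--Quimpo) but open in general. Where no suitable quotient exists, $\Gamma$ is close to a non-abelian finite simple group and one appeals to the Classification, verifying Hamiltonicity separately for alternating, Lie-type, and sporadic families. Throughout the induction one would have to maintain enough quantitative control on the girth of the generating set to ensure that every time one descends to a quotient the hypothesis "$d<\alpha n$" is preserved, or else pass back into the dense regime and invoke the paper's theorem.

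The dominant obstacle is not any single step: \emph{Conjecture~\ref{conj:main} has been open since 1969 and is a central unresolved problem of algebraic graph theory}, with even its Cayley-graph restriction wide open. The dense half yields to the Regularity Lemma and absorbing techniques because high degree forces pseudorandom behaviour that overwhelms the group structure; in the sparse half the group structure is essentially all there is, and no uniform technique is known that exploits it across arbitrary generating sets and arbitrary finite groups. In particular, the lifting construction sketched above has no known analogue for non-solvable $\Gamma$, and the base case of near-simple groups currently requires a case-by-case handling that resists uniform treatment. Accordingly the honest proposal is to view this paper as settling the dense half of Conjecture~\ref{conj:main} and to regard the complementary sparse half as requiring a genuinely new algebraic idea.
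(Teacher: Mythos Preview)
Your assessment is essentially correct, but the framing needs adjustment: this statement is presented in the paper as an \emph{open conjecture} (attributed to Lov\'asz, 1969), not as a theorem with a proof. There is therefore no proof in the paper to compare your proposal against. The paper's actual contribution is Theorem~\ref{thm:main}, which confirms the conjecture only in the dense regime (valency at least $\alpha n$, for $n$ sufficiently large), and in fact obtains a Hamilton cycle rather than just a path.

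Your final paragraph is the accurate one: the sparse half of the conjecture remains wide open, and the lifting/quotient program you sketch for Cayley graphs is a reasonable description of one line of attack people have pursued, but it is not known to work in general (in particular, the base case of near-simple groups and the lifting step for non-solvable $\Gamma$ are not settled). So there is no genuine proof here to evaluate---only a correct recognition that the paper resolves the dense case and that the full conjecture is beyond current techniques.
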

Let us recall that a graph is \emph{vertex-transitive} if its automorphism group acts transitively upon its vertices.

In contrast to common belief, \Lovasz\
in~1969~\cite{ConferenceCalgary69} asked for the construction of a
connected vertex-transitive graph containing no Hamilton path. Traditionally however, the \Lovasz\ conjecture is always stated in the positive.

At the moment no counterexample is known. Moreover, there are only five known
examples of connected vertex-transitive graphs having no Hamilton cycle. These
are $K_2$, the Petersen graph, the Coxeter graph and the graphs obtained from
the Petersen and Coxeter graphs by replacing every vertex with a triangle. Apart
from $K_2$, the other four examples are not Cayley graphs and this leads to the
conjecture that every connected Cayley graph on at least three vertices is
Hamiltonian. Similarly as with Conjecture~\ref{conj:main} this is
now folklore, and its origin may be difficult to trace back, but probably the first
conjecture in this direction is due to Thomassen (see e.g. \cite{Ber79}), and asserts
that there are only finitely many connected vertex-transitive graphs that do
not have a Hamilton cycle. At the moment however, the best known general result
which is due to Babai~\cite{Babai79} states that every connected vertex-transitive graph on $n$ vertices has a cycle of length at least $\sqrt{3n}$.

The conjecture has attracted a lot of interest from researchers and there is no common agreement as to its validity. For example, in the negative direction, Babai~\cite{Babai95} conjectured that there is an absolute constant $c > 0$ and infinitely many connected Cayley graphs $G$ without cycles of length greater than $(1-c)|G|$.

We will omit any further overview of the vast research these questions have motivated, referring the reader to the following surveys~\cite{Witte&Gallian84,Curran&Gallian96,KutnarMarusic:HamiltonSurvey,Pak&Radoicic09} and their references.

In this paper we prove that every sufficiently large dense connected vertex-transitive graph is Hamiltonian.

\begin{theorem}\label{thm:main}
For every $\alpha>0$ there exists an $n_0$ such that every connected vertex-transitive graph on $n \geqslant n_0$ vertices of valency at least $\alpha n$ contains a Hamilton cycle.
\end{theorem}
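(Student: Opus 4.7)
The plan is to combine the Szemer\'edi Regularity Lemma with the following structural fact about connected vertex-transitive graphs: by a theorem of Watkins, the vertex-connectivity satisfies $\kappa(G)\ge \tfrac{2}{3}(d+1)$, so in our setting $\kappa(G)=\Omega(\alpha n)$. This high connectivity is the feature we will rely on whenever Dirac-type arguments do not suffice.

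First, apply the Regularity Lemma with parameters depending on $\alpha$, obtaining a small exceptional set $V_0$ together with clusters $V_1,\dots,V_k$ of equal size, and a reduced graph $R$ with $\delta(R)\ge(\alpha-o(1))k$. The strategy is then to locate in $R$ a Hamilton cycle $C_1C_2\cdots C_kC_1$ with the additional property that each consecutive pair $(V_{C_i},V_{C_{i+1}})$ is super-regular; by the Blow-up Lemma this lifts to a Hamilton cycle of $G[V_1\cup\cdots\cup V_k]$, and the vertices of $V_0$ are absorbed via short vertex-disjoint paths supplied by the $\Omega(\alpha n)$-connectivity of $G$.

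When $\alpha\ge \tfrac12$ the reduced graph has minimum degree above $|R|/2$, so Dirac's theorem finishes the cycle-in-$R$ step. For the \emph{non-extremal} regime $\alpha<\tfrac12$, $R$ inherits the high connectivity of $G$ (a small vertex cut in $R$ would lift to a small cut in $G$ modulo $V_0$), and standard rotation-extension together with Chv\'atal--Erd\H{o}s-type ideas should produce the spanning cycle and the required super-regularity after mild adjustment of the partition.

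The crux, and the main obstacle, is the \emph{extremal} regime in which $R$ is close to a non-Hamiltonian structure such as an unbalanced bipartite graph or a blow-up of a short cycle. Here vertex-transitivity must be used in an essential way: any such quasi-partition of $G$ is forced by symmetry to be (approximately) respected by $\Aut(G)$ and therefore to come from an imprimitivity block system with blocks of size $\Theta(n)$. The induced quotient is itself a vertex-transitive graph on far fewer vertices, and one can combine an ad-hoc traversal inside each block (using once more the $\Omega(\alpha n)$-connectivity of $G$) with a Hamilton cycle of the quotient to assemble a Hamilton cycle of $G$. Formulating this block-structure analysis so that it works for all dense vertex-transitive graphs, rather than only for Cayley graphs where the group structure is manifest, is where I expect the main technical effort to lie. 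Once this case is handled, all tools used---the Regularity Lemma, the Blow-up Lemma, connectivity-based path-finding and the block decomposition---are constructive and yield the promised polynomial-time algorithm.
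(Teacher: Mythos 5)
The central gap is the claim that ordinary vertex connectivity transfers to the reduced graph. It does not. Take $G$ to be two disjoint copies of $K_{n/2}$ joined by a perfect matching: this is connected, vertex-transitive, of valency about $n/2$, and has vertex connectivity $\Theta(n)$, yet the matching contributes only $O(1)$ edges per vertex — well below any Regularity Lemma density threshold — so the reduced graph is two disjoint cliques and is \emph{disconnected}. The underlying problem is that the Regularity Lemma discards an edge set of bounded maximum degree (edges meeting $V_0$, inside irregular pairs, and inside low-density pairs), and vertex connectivity is not stable under such deletions. The paper's fix is the notion of \emph{iron connectivity} — connectivity surviving simultaneous removal of a bounded-size vertex set and a bounded-maximum-degree edge set. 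Theorem~\ref{lem:goodIslands} shows that $G$ decomposes into a bounded number of isomorphic vertex-transitive pieces, each highly iron-connected, and iron connectivity does pass to the reduced graph. The block decomposition you reserve for the ``extremal'' case is precisely this island decomposition, but it is needed for \emph{all} $\alpha$, not only the extremal regime; the example above already shows that the case $\alpha$ close to $1/2$ is not saved by Dirac in $R$.

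Beyond this, ``rotation-extension plus Chv\'atal--Erd\H{o}s should produce the spanning cycle and the required super-regularity'' is an announcement, not an argument. The paper does not find a Hamilton cycle in $R$ at all: it finds an almost-perfect \emph{matching} in the reduced graph via LP-duality (the fractional vertex cover number of a non-empty vertex-transitive graph is exactly $n/2$, Lemma~\ref{lem:CoverVertexTr}, and this survives deletion of a sparse edge set, Lemma~\ref{lem:coverAfter}), and then runs \L uczak's connected-matching method, absorbing $V_0$ and linking the super-regular pairs through an auxiliary digraph $R^*$ whose high strong connectivity again rests on iron connectivity of each piece. You also do not address the almost-bipartite case: if a piece $G[V_i]$ is close to bipartite it cannot be Hamilton-connected, so the gluing you envisage cannot be carried out freely; the paper needs the separate notion of $\ell$-bipathitionability (Theorem~\ref{thm:RobustOKbip}) together with the structural Lemma~\ref{lem:almostBipBalanced}, which shows that the near-bipartition is balanced and respected by $\Aut(G)$. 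Your outline — Regularity, Blow-up, a connectivity bound and a block decomposition — is in the right territory, but without the iron-connectivity / fractional-matching / bipathitionability machinery the middle of the proof does not close.
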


\addtocounter{main}{\value{theorem}}


\subsection{Relation to previous results.}

As said above, we do not aim to survey results related to Conjecture~\ref{conj:main}. However, it turns out that Theorem~\ref{thm:main} is implied in several settings by other results. We want to describe these and pinpoint some situations when the Hamiltonicity given by Theorem~\ref{thm:main} was not known before. We will restrict the discussion to the family of Cayley graphs.

Recall that Fleischner's Theorem~\cite{Fleischner:Square} asserts that the
(distance-)square of a 2-connected graph is Hamiltonian. Suppose that $G$ is a
connected Cayley graph over a group $\Gamma$ with a generating set $X$.
2-connectedness is easily shown to be implied by connectedness for Cayley graphs. If we find a set  $Y\subset X$ which
generates $\Gamma$, and such that $Y^2\subset X$, then Fleischner's
Theorem applies and the Hamiltonicity of $G$ follows. This is a `typical'\footnote{In the sense that most examples that come to mind are of this sort} situation when $X$ is dense in $\Gamma$. However, there are examples, when the set $Y$ does not exist.

There are two important classes of groups where Hamiltonicity of the corresponding Cayley graph follows by other methods. One class is abelian groups. In the abelian setting, the Hamiltonicity of the Cayley graph is known for all generating sets. The argument has been pushed further by Pak and Radoi\v{c}i{\'c}~\cite{Pak&Radoicic09} to groups which are close to abelian. Another important class is groups with no non-trivial irreducible representations of low dimension. This family for example, contains all non-abelian simple groups. For these groups, Gowers~\cite{Gowers:QuasirandomGrops} proved that the corresponding Cayley graph is quasirandom (in the sense of Chung-Graham-Wilson~\cite{ChGraWi89:Quasirandom}), no matter what the set $X$ of generators is taken to be (provided that $X$ is dense). In this case, the Hamiltonicity follows from the well-known fact (see e.g.~\cite[Proposition 4.19]{KriSu:Pseudorandom}) that dense pseudorandom graphs are Hamiltonian. However, there are groups which are very far from abelian and yet have non-trivial low-dimensional representations. Soluble groups are one such example.


\subsection{Overview}

Here is an overview of the rest of the paper. Section~\ref{sec:notation} contains some
notation that we are going to use. Our proof will use \Szemeredi's Regularity
Lemma. In using the Regularity Lemma, we would like some properties of
the original graph $G$ to be inherited by the reduced graph obtained from the
application of the lemma. In Section~\ref{sec:matching} we discuss some results from matching theory in this
direction. These results will enable us to show that the reduced graph (after a
minor modification) contains an almost perfect  matching. In Section~\ref{sec:connectivity}
we discuss two non-standard notions of connectivity: robustness and
iron connectivity. The main result of Section~\ref{sec:connectivity} is
Theorem~\ref{lem:goodIslands} which says that $G$ can be partitioned into a
bounded number of isomorphic vertex-transitive pieces each of which is
iron connected. This is a much stronger notion than the standard notion of
vertex connectivity. In particular, iron connectivity is inherited by the
reduced graph as well. It will turn out that if $G$ `looks very much like a
bipartite graph' then there are some additional difficulties that need to be
overcome. In Section~\ref{sec:bipartite} we quantify what we mean by the phrase
`looks very much like a bipartite graph' and prove that in this case the vertex
set of $G$ can be partitioned into two equal parts such that every automorphism
of $G$ respects this partition. In Section~\ref{sec:regularity} we collect all the tools
needed for the application of the Regularity Lemma. In Section~\ref{sec:HCinIronConnected} we apply
the Regularity Lemma to show that every sufficiently large iron connected vertex-transitive graph
contains a Hamilton cycle. In fact, we will need and prove a somewhat stronger
property. Finally, in Section~\ref{sec:proof} we put all the pieces together. We first
partition $G$ into the bounded number of vertex-transitive, iron connected
pieces, then find a Hamilton cycle in each of these pieces, and then show how to
glue these pieces together. It turns out that what we need for the glueing is
not Hamilton cycles but rather more general objects which we call $\ell$-pathitions. Their
existence is also guaranteed from our work in Section~\ref{sec:HCinIronConnected}.

It turns out that all the steps of our proof of Theorem~\ref{thm:main} can be
performed algorithmically. In Section~\ref{sec:algorithmic} we discuss how to
turn the proof into a polynomial time algorithm for finding a Hamilton cycle in
dense vertex-transitive graphs.


\section{Notation and preliminaries}\label{sec:notation}

Given a positive integer $m$ we will often denote the set $\{1,\ldots,m\}$ of the first $m$ positive integers by $[m]$.

If every vertex of a graph $G$ has the same degree $k$ then we say that $G$ has
\emph{valency $k$}, and write $\deg(G) = k$. For a set $E'\subset E(G)$ we write
$\Delta(E')$ for the maximum degree of the subgraph induced by $E'$. Further,
for two disjoint sets $A,B\subset V(G)$ we write $\Delta_G(A,B)$ for the maximum
degree of the bipartite graph $G[A,B]$. For a vertex $v \in V(G)$ and a subset
$A \subseteq V(G)$ we write $\neighbor_{A}(v)$ for the set of neighbours of $v$
which lie in $A$. We denote the size of $\neighbor_{A}(v)$ by $\deg(v,A)$.

We denote the automorphism group of $G$ by $\Aut(G)$. We will usually denote the elements of $\Aut(G)$ by $f$ or $g$. 

Recall that a graph $G$ is \emph{Hamilton-connected} if for any pair of distinct
vertices $x,y$ there is a Hamilton path with $x$ and $y$ as terminal vertices.
Another important connectivity notion is that of linkedness: $G$ is
\emph{$\ell$-linked} if for any set of distinct vertices
$x_1,\ldots,x_\ell,y_1,\ldots,y_\ell\in V(G)$ there exist vertex-disjoint paths
$P_1,\ldots,P_\ell$ such that $x_i$ and $y_i$ are terminal vertices of $P_i$.
For our proof of Theorem~\ref{thm:main}, we will need a combination of the two
notions above. Given a graph $G$ and a subset $U$ of the vertex set of $G$, we say that $G$ is \emph{$\ell$-pathitionable with
exceptional set $U$} if for any $\ell'\in[\ell]$, and for any set of distinct
vertices $x_1,\ldots,x_{\ell'},y_1,\ldots,y_{\ell'}\in V(G)\setminus U$ there
exist vertex-disjoint paths $P_1,\ldots,P_{\ell'}$ such that $x_i$ and $y_i$ are
terminal vertices of $P_i$. Furthermore, we require that the paths
$P_1,\ldots,P_{\ell'}$ cover all the vertices of $G$. So a graph is
$1$-pathitionable with exceptional set $\emptyset$ if and only if it is
Hamilton-connected.

Observe that for example the complete bipartite graph $K_{n,n}$ is not
1-pathitionable. Indeed, we cannot connect two vertices of the same colour class
of $K_{n,n}$ by a Hamilton path. Yet, we will need to deal with graphs which are
bipartite or even almost bipartite. To this end we introduce a modification of
pathitionability to bipartite setting. Suppose that a graph $G$ together with a
partition $V(G)=A\dcup B$ is given. We say that $G$ is
\emph{$\ell$-bipathitionable with exceptional set $U$ with respect to the partition $A\dcup B$} if for any $\ell'\in[\ell]$, and for any set of distinct vertices $x_1,\ldots,x_{\ell'},y_1,\ldots,y_{\ell'}\in V(G)\setminus U$ such that
\begin{equation}\label{eq:BIrequirement}
 |\{x_1,\ldots,x_{\ell'},y_1,\ldots,y_{\ell'}\}\cap
A|=|\{x_1,\ldots,x_{\ell'},y_1,\ldots,y_{\ell'}\}\cap B|
\end{equation}
there exist vertex-disjoint paths $P_1,\ldots,P_{\ell'}$ such that $x_i$ and $y_i$ are terminal vertices of $P_i$. Furthermore, we require that the paths $P_1,\ldots,P_{\ell'}$ cover all the vertices of $G$.

Suppose that $\mathcal S=\{P_1,\ldots, P_\ell\}$ is a system of vertex-disjoint
paths in a graph $G$. We then say that a system of paths $\mathcal
S'=\{P'_1,\ldots,P'_\ell\}$ is an \emph{extension of $\mathcal S$} if the paths
$P'_i$ are vertex-disjoint, and for each $i\in[\ell]$ we have $V(P'_i)\supset
V(P_i)$, and $P_i$ and $P'_i$ have the same endvertices. If $\mathcal S'$ covers
all the vertices of $G$ then we say that $\mathcal S'$ is a \emph{complete
extension}.

Given a graph $G$ and a natural number $\ell$, the \emph{$\ell$-blow-up of $G$}, denoted $\ell\times G$ is the graph in which every vertex of $G$ is replaced by an independent set of size $\ell$, and each edge of $G$ is replaced by a complete bipartite graph between the two corresponding independent sets.

As an auxiliary tool we will need to work with \emph{digraphs} as well. For basic terminology about digraphs we refer the reader to~\cite{DigraphBook}. In particular we do not allow loops or multiple edges. (We do however allow edges between the same two vertices which have different direction.) Recall that a digraph $G$ is strongly connected if for any pair of distinct vertices $a,b\in V(G)$ there is a directed walk from $a$ to $b$. We will also need the following extension of the notion of strong connectedness: we say that a digraph $D$ is \emph{$\ell$-strongly connected} if for every set $U\subset V(D)$, $|U|\le \ell$ and for any pair of distinct vertices $a,b\in V(G)\setminus U$ there exists a directed walk from $a$ to $b$ avoiding $U$. 

Given a (finite) set $X$ and a function $f:X \to \mathbb{R}$ we will write $\|f\|_1$ for the sum $\sum_{x \in X} |f(x)|$. 

Finally, to avoid unnecessarily complicated calculations, we will sometimes omit floor and ceiling signs and treat large numbers as if they were integers.


\section{Some matching theory}\label{sec:matching}

Let us recall that a function $f:V\rightarrow [0,1]$ is a \emph{fractional vertex cover} of a graph $G=(V,E)$ if $f(x)+f(y)\ge 1$ for every $xy\in E$. We write $\fvc(G)$ for the weight of the minimum fractional vertex cover, i.e.
\[
\fvc(G)=\min\{\|f\|_1 : \mbox{$f$ is a fractional vertex cover of $G$}\}\;.
\]
A function $M:E\rightarrow [0,1]$ is  a \emph{fractional matching} of a graph $G=(V,E)$ if for every $v\in V$ we have $\sum_{e \ni v}M(e)\le 1$, where the summation is taken over all edges $e\in E$ containing the vertex $v$. We write $\nu^*(G)$ for the weight of the maximum fractional matching, i.e.
\[
\nu^*(G) = \max\{\|M\|_1 : \text{$M$ is a fractional matching of $G$}\}.
\]
The fractional matching $M$ is said to be \emph{half-integral} if $M(e)\in\{0,\frac12,1\}$ for every $e\in E$.

It is easy to see that for every graph $G$ we have $\fvc(G) \geqslant \nu^*(G)$. The duality of linear programming guarantees that in fact we have equality. Moreover, the 
half-integrality property of fractional matchings (cf.~\cite[Theorem~30.2]{Schr03a}) says that there is a half-integral matching with weight $\nu^*(G)$. 

\begin{theorem}\label{thm:HalfIntegralMatching} $ $
\begin{enumerate}[label=(\alph{*})]
\item\label{it:Ma} For every graph $G$ we have $\fvc(G) = \nu^*(G)$.
\item\label{it:Mb} For every graph $G$ there is a half-integral matching $M$ of
$G$ with $\|M\|_1 = \nu^*(G)$.
\end{enumerate}
\end{theorem}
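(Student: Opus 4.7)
My plan is to combine linear programming duality with a standard characterization of extreme points of the fractional matching polytope.

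For part~\ref{it:Ma}, I would write the fractional matching problem as the linear program
\[
\nu^*(G)=\max\bigl\{\mathbf{1}^\top M : A^\top M\le \mathbf{1},\ M\ge 0\bigr\},
\]
where $A$ is the vertex--edge incidence matrix of $G$. Its dual is
\[
\min\bigl\{\mathbf{1}^\top f : Af\ge \mathbf{1},\ f\ge 0\bigr\}.
\]
Any optimal dual solution $f$ can be truncated to $\min(f,\mathbf{1})$ without losing feasibility (the edge constraint $f(u)+f(v)\ge 1$ is preserved whenever we only lower coordinates already $\ge 1$) or increasing the objective. Hence the dual optimum equals $\fvc(G)$, and strong LP duality yields $\fvc(G)=\nu^*(G)$.

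For part~\ref{it:Mb}, I would use the fact that the feasible region above is a bounded polytope, so the LP attains its optimum at an extreme point $M^*$. The key claim is that any extreme point is half-integral; more precisely, its support decomposes into components each of which is either a single edge carrying value $1$, or an odd cycle whose edges all carry value $\tfrac12$. This immediately yields~\ref{it:Mb}.

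The structural claim is proved by contradiction. Suppose $M^*$ is extreme and some component $C$ of its support is not of the above form. I would then exhibit a nonzero signed edge-weighting $\delta\colon E(C)\to\mathbb{R}$ with $\sum_{e\ni v}\delta(e)=0$ at every vertex $v$ at which the matching constraint is tight, by walking along an even cycle in $C$, along a path between two vertex-disjoint odd cycles of $C$, or along a path between two vertices at which the constraint is slack, assigning alternating $\pm 1$ to consecutive edges. For sufficiently small $\varepsilon>0$, both $M^*+\varepsilon\delta$ and $M^*-\varepsilon\delta$ are then feasible fractional matchings whose average is $M^*$, contradicting extremality. Ruling out all such $C$ forces every component to be either a single edge (necessarily of value $1$, else one can perturb along it) or an odd cycle (with all values forced to $\tfrac12$ by the tight vertex constraints).

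The main obstacle is the structural claim in part~\ref{it:Mb}; the duality part is routine. This half-integrality of extreme fractional matchings is a classical theorem of Balinski, so in practice one may simply invoke~\cite[Theorem~30.2]{Schr03a}, as the paper itself does, rather than carry out the perturbation argument from scratch.
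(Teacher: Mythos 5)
Your proposal is correct and follows the standard route: LP duality for~\ref{it:Ma}, and Balinski's characterization of extreme points of the fractional matching polytope (supports are disjoint unions of edges of value $1$ and odd cycles of value $\tfrac12$) for~\ref{it:Mb}. The paper itself offers no proof of this theorem --- it simply invokes LP duality and cites~\cite[Theorem~30.2]{Schr03a}, which is exactly the half-integrality statement you sketch; your perturbation case analysis omits a couple of corner cases (a slack vertex attached to an odd cycle by a path, and two odd cycles sharing a vertex), but as you yourself note, in practice one just cites Schrijver here, as the paper does.
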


The next lemma asserts that removal of a small fraction of edges from a
vertex-transitive graph $G$ does not decrease $\fvc(G)$ much.

\begin{lemma}\label{lem:coverAfter}
Let $G$ be a vertex-transitive graph on $n$ vertices. Suppose $G'$ is a spanning subgraph of $G$ such that $e(G')\ge (1-\delta)e(G)$. Then $\fvc(G') \geqslant (1 - \delta)\fvc(G)$.
\end{lemma}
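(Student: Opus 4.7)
The plan is to apply LP duality (Theorem~\ref{thm:HalfIntegralMatching}\ref{it:Ma}) twice: once on $G$ to pin down $\fvc(G)$ exactly, and once on $G'$ so that lower-bounding $\fvc(G')$ reduces to exhibiting a heavy fractional matching in $G'$. The key leverage from the hypothesis is that vertex-transitivity forces $G$ to be $d$-regular with $d = 2e(G)/n$, and regularity is exactly what is needed to make a uniform edge-weighting feasible.

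First I would compute $\fvc(G)$ exactly. The constant function $e \mapsto 1/d$ on $E(G)$ is a fractional matching of $G$ (at each vertex the sum of weights equals $d/d = 1$) of total weight $e(G)/d = n/2$. On the other hand, $v \mapsto 1/2$ is a fractional vertex cover of weight $n/2$. Combined with Theorem~\ref{thm:HalfIntegralMatching}\ref{it:Ma} this forces $\fvc(G) = \nu^*(G) = n/2$.

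Next I would define $M' : E(G') \to [0,1]$ by $M'(e) = 1/d$ for every $e \in E(G')$. Since $G' \subseteq G$ and $G$ is $d$-regular, $\deg_{G'}(v) \leq d$ at every vertex $v$, so the capacity constraint $\sum_{e \ni v} M'(e) \leq 1$ is satisfied and $M'$ is a legitimate fractional matching of $G'$. Its weight is
\[
\|M'\|_1 \;=\; \frac{e(G')}{d} \;\geq\; \frac{(1-\delta)\,e(G)}{d} \;=\; (1-\delta)\,\frac{n}{2}.
\]
Applying Theorem~\ref{thm:HalfIntegralMatching}\ref{it:Ma} now to $G'$ yields $\fvc(G') = \nu^*(G') \geq \|M'\|_1 \geq (1-\delta)\,\frac{n}{2} = (1-\delta)\,\fvc(G)$, which is the claim.

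There is no real obstacle to this plan; the only place where vertex-transitivity of $G$ is used is to conclude regularity, which is precisely what licenses the uniform fractional matching construction in both $G$ and $G'$. In particular, no averaging over $\Aut(G)$ or appeal to half-integrality is needed — $\fvc(G)$ is already determined by the regular degree $d$.
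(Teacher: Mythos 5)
Your proof is correct, but it takes a genuinely different route from the paper. The paper stays entirely on the cover side of the LP: starting from an arbitrary fractional vertex cover $f$ of $G'$, it symmetrizes by averaging over $\Aut(G)$, observes the result is constant, and argues by contradiction that the constant value must be at least $(1-\delta)/2$, so scaling by $1/(1-\delta)$ produces a cover of $G$. You instead cross to the matching side via LP duality: you pin down $\fvc(G)=n/2$ exactly (essentially reproving the paper's subsequent Lemma~\ref{lem:CoverVertexTr}) using the constant fractional matching $1/d$ and the constant fractional cover $1/2$, and then observe that restricting that same uniform weighting to $E(G')$ remains feasible because $G'\subseteq G$ and $G$ is $d$-regular, giving $\nu^*(G')\geq e(G')/d\geq(1-\delta)n/2$. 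One consequence worth noting is that your argument uses only the regularity of $G$, not the full vertex-transitivity, so it actually establishes the lemma for arbitrary regular graphs; the paper's symmetrization argument is tied to the automorphism group. The one thing you should add is a line handling the degenerate case $E(G)=\emptyset$ (so $d=0$ and the weighting $1/d$ is undefined): there $\fvc(G)=0$ and the conclusion $\fvc(G')\geq 0$ is trivial, so nothing is lost.
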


\begin{proof}
Let $f:V(G)\rightarrow [0,1]$ be an arbitrary fractional vertex cover of $G'$. To prove the lemma, it suffices to show that there is a function
$f':V(G)\rightarrow [0,1]$ such that
\begin{enumerate}[label=(\alph{*})]
\item\label{it:A} $\|f\|_1=\|f'\|_1$;
\item\label{it:B} $f'(x)+f'(y) \geqslant 1-\delta$ for every edge $xy \in
E(G)$.
\end{enumerate}
Indeed, if the above hold then the function $g:V(G) \to [0,1]$ defined by $g(x) = f'(x)/(1-\delta)$ is a fractional vertex cover of $G$ with $(1-\delta)\|g\|_1 = \|f\|_1$ and the claim of the lemma follows.

To show that such an $f'$ exists, we define 
\[
f'(v)=\frac1{|\Aut(G)|}\sum_{g\in\Aut(G)}f(g(v))\;.
\]
As $G$ is vertex-transitive, $f'$ is constant. Further,~\ref{it:A} is satisfied. Suppose for
contradiction that~\ref{it:B} fails for some edge $xy$ of $G$. Since $f'$ is
constant, we get that~\ref{it:B} fails for every edge of $G$. Thus,
\begin{equation}\label{eq:plug}
\sum_{uv\in E(G)}(f'(u)+f'(v))<(1-\delta)e(G)\le e(G')\le \sum_{uv\in
E(G')}(f(u)+f(v))\;,
\end{equation}
where the last inequality follows from the fact that $f$ is a fractional vertex cover of $G'$. Plugging the defining formula for~$f'$ in~\eqref{eq:plug} we get
\[
\sum_{g\in\Aut(G)}\sum_{uv\in E(G)}(f(g(u))+f(g(v)))<
\sum_{g\in\Aut(G)}\sum_{uv\in E(G')}(f(u)+f(v))\;,
\]
Observe that the sum $\sum_{uv\in E(G)}(f(g(u))+f(g(v)))$ does not depend on $g$. Therefore, $\sum_{uv\in E(G)}(f(u)+f(v))<\sum_{uv\in E(G')}(f(u)+f(v))$, a contradiction.
\end{proof}

The following lemma asserts that $\fvc(G)=\frac n2$ for every non-empty vertex-transitive graph of order $n$. This is easy and well-known; nevertheless we include a proof for completeness.

\begin{lemma}\label{lem:CoverVertexTr}
Suppose that $G$ is a vertex-transitive graph of order $n$ and at least one
edge. Then $\fvc(G)=\frac n2$. 
\end{lemma}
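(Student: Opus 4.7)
The plan is to prove the two inequalities $\fvc(G) \le n/2$ and $\fvc(G) \ge n/2$ separately.

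For the upper bound, I would simply exhibit the constant function $f_0 \equiv 1/2$ as a fractional vertex cover. Since $f_0(x) + f_0(y) = 1$ for every edge $xy$, this is valid, and its weight is $n/2$. Thus $\fvc(G) \le n/2$.

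For the lower bound, I would mimic the symmetrization trick used in Lemma~\ref{lem:coverAfter}. Let $f : V(G) \to [0,1]$ be any fractional vertex cover of $G$, and define
\[
f'(v) = \frac{1}{|\Aut(G)|} \sum_{g \in \Aut(G)} f(g(v)).
\]
Because $G$ is vertex-transitive, the value $f'(v)$ does not depend on $v$; call this common value $c$. Averaging preserves $\ell_1$-norm, so $\|f'\|_1 = \|f\|_1 = nc$. Also, $f'$ is itself a fractional vertex cover, since each summand $v \mapsto f(g(v))$ is (as $g$ is an automorphism) and the cover condition is preserved under convex combinations. As $G$ has at least one edge, there exists some edge $xy \in E(G)$, and by vertex-transitivity every vertex is incident to at least one edge, so applying the covering inequality to this edge yields $2c \ge 1$, i.e.\ $c \ge 1/2$. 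Consequently $\|f\|_1 = nc \ge n/2$, and taking the minimum over all $f$ gives $\fvc(G) \ge n/2$.

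I do not expect any real obstacle here: the only subtlety is to ensure that the averaging produces a valid cover (clear from linearity of the constraint) and that the edge-existence hypothesis is used to rule out the trivial cover $f' \equiv 0$. Combining the two bounds proves $\fvc(G) = n/2$. Alternatively, one could deduce the lower bound from Theorem~\ref{thm:HalfIntegralMatching}\ref{it:Ma} by exhibiting a fractional matching of weight $n/2$ (namely $M \equiv 1/\deg(G)$ on $E(G)$, which has total weight $e(G)/\deg(G) = n/2$), but the automorphism-averaging argument is more in keeping with the rest of the section.
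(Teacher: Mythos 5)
Your proof is correct and uses essentially the same argument as the paper: exhibiting the constant $1/2$ function for the upper bound, and automorphism-averaging a given cover to a constant one and then applying the cover constraint on one edge for the lower bound. The paper phrases the lower bound as a proof by contradiction, but the content is identical; your alternate LP-duality remark via the uniform fractional matching $M \equiv 1/\deg(G)$ is also valid.
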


\begin{proof}
The constant one-half function is a fractional vertex cover of $G$, thus
establishing $\fvc(G)\le \frac n2$.

Suppose for contradiction that there exists a fractional vertex cover $f:V(G)\rightarrow [0,1]$ such that $\|f\|_1<\frac n2$. The function $f':V(G)\rightarrow [0,1]$ defined by $f'(v)=\frac1{|\Aut(G)|}\sum_{g\in\Aut(G)}f(g(v))$ is a constant function, which is a fractional vertex cover. Since $\|f'\|_1=\|f\|_1<\frac n2$, we have $f'(v)<\frac12$ for each $v\in V(G)$. In particular, $f'(x)+f'(y)<1$ for an edge $xy\in E(G)$, a contradiction.
\end{proof}

The next lemma asserts that 2-blow-up graphs contain an integral matching which is
twice the weight of the maximum fractional matching of the original graph.
\begin{lemma}\label{lem:matchingBlowUp}
There exists a matching of weight $2\nu^*(H)$ in the graph $2\times H$.
\end{lemma}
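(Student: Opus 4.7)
The plan is to invoke Theorem~\ref{thm:HalfIntegralMatching}\ref{it:Mb} to obtain a half-integral matching $M$ of $H$ with $\|M\|_1 = \nu^*(H)$, and then to convert $M$ into an ordinary matching of $2 \times H$ of twice the weight by handling each ``structural piece'' of its support separately.

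First I would record the standard structural fact about half-integral matchings. Writing $E_1 := \{e \in E(H) : M(e) = 1\}$ and $E_{1/2} := \{e \in E(H) : M(e) = 1/2\}$, the set $E_1$ forms an (ordinary) matching, the subgraph $(V(H), E_{1/2})$ has maximum degree at most $2$, and no vertex is saturated by both $E_1$ and $E_{1/2}$ (otherwise the fractional constraint $\sum_{e \ni v} M(e) \le 1$ at that vertex would fail). Consequently $E_{1/2}$ decomposes as a vertex-disjoint union of paths and cycles in $H$.

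Denoting the two clones of $v \in V(H)$ in $2 \times H$ by $v_1$ and $v_2$, I would then assemble a matching $N$ in $2 \times H$ piece by piece. For each edge $uv \in E_1$, add both $u_1 v_1$ and $u_2 v_2$. For each path $w^1 w^2 \cdots w^k$ in $E_{1/2}$, add the $k-1$ edges $w^i_1 w^{i+1}_2$ for $i = 1,\ldots,k-1$, which visibly form a matching. For each cycle $w^1 w^2 \cdots w^k w^1$ in $E_{1/2}$, I would exhibit a perfect matching of size $k$ on its $2k$ clones; this exists because the induced subgraph of $2 \times H$ on those clones is $2 \times C_k$ and contains the Hamilton cycle
\[
w^1_1 \to w^2_1 \to \cdots \to w^k_1 \to w^1_2 \to w^k_2 \to w^{k-1}_2 \to \cdots \to w^2_2 \to w^1_1,
\]
from which one extracts every second edge.

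It remains to observe that the three kinds of local constructions act on pairwise disjoint subsets of $V(2 \times H)$, so their union $N$ is indeed a matching in $2 \times H$, and then to count:
\[
|N| = 2|E_1| + \sum_{\text{paths}}(k-1) + \sum_{\text{cycles}} k = 2\Bigl(|E_1| + \tfrac12\sum_{\text{paths}}(k-1) + \tfrac12\sum_{\text{cycles}} k\Bigr) = 2\|M\|_1 = 2\nu^*(H).
\]
The only step containing any actual content is the cycle case: a naive two-colouring / alternating pairing in $2 \times C_k$ breaks down on odd cycles, but the $K_{2,2}$-gadget structure of each blown-up edge provides just enough slack to close up into a Hamilton cycle, and hence a perfect matching on all $2k$ clones, regardless of the parity of $k$.
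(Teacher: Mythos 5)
Your proposal is correct and follows essentially the same route as the paper's proof: invoke Theorem~\ref{thm:HalfIntegralMatching}\ref{it:Mb} to get a half-integral matching, split its support into weight-$1$ edges and a weight-$\tfrac12$ part decomposing into paths and cycles, duplicate the weight-$1$ edges across the two layers, and lift each path and cycle to a matching on its $2k$ clones of the right size. The only cosmetic difference is in the local constructions (the paper gives explicit ``horizontal'' edges $v_j^s v_{j+1}^s$ with $j+s$ even plus one closing edge whose endpoints depend on the parity of $r$, whereas you use ``diagonal'' edges for paths and extract a perfect matching from a Hamilton cycle of $2\times C_k$ for cycles), and your parity remark correctly identifies the point the paper also has to treat separately.
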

\begin{proof} 
Suppose that each vertex
$v$ in $H$ was replaced by two vertices $v^1$ and $v^2$ in the graph $2\times
H$.

Consider a half-integral matching $M$ in the graph $H$ of weight $\nu^*(H)$.
Such a matching exists by Theorem~\ref{thm:HalfIntegralMatching}\ref{it:Mb}. We
now construct an integral matching (i.e.~a matching) $M'$ in $2\times H$ of
weight $2\nu^*(H)$ as follows: For any edge $uv$ with weight~1 in $M$,
we add the edges $u^1v^1$ and $u^2v^2$ in $M'$. The set of edges with weight
$\frac12$ in $M$ form a subgraph of $R$ which is a union of paths and cycles.
For every such path $v_1 \cdots v_r$ we add in $M'$ all edges of the form
$v_j^sv_{j+1}^s$ with $1 \leqslant s \leqslant 2, 1 \leqslant j \leqslant
r-1$ and $j+s$ even. Finally, for every such cycle $v_1 \cdots
v_rv_1$ we add in $M'$ all edges of the form $v_j^sv_{j+1}^{s}$
with $1 \leqslant s \leqslant 2, 1 \leqslant j \leqslant r-1$ and $j+s$ even,
together with either the edge $v_r^1 v_1^2$ if $r$ is odd or the edge
$v_r^2 v_1^2$ if $r$ is even. It is immediate by the construction that
$M'$ is indeed a matching of $2\times H$ of weight $\|M'\|_1=2\|M\|_1=
2\nu^*(H)$.
\end{proof}

The next lemma says that the property of containing a large matching is
inherited by the reduced graph as well. Here we formulate it without referring
to the Regularity lemma (and the notion of the reduced graph, both
notions introduced only in Section~\ref{sec:regularity}).
\begin{lemma}\label{lem:LargeMatchingInherited}
Suppose that a graph $\tilde{R}$ is given and let $\tilde{G}$ be a subgraph
of its $m$-blow-up. Then
$\nu^*(\tilde{R})\ge\frac{\nu^*(\tilde{G})}{m}$.
\end{lemma}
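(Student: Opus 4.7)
The plan is to construct a fractional matching of $\tilde{R}$ directly from a fractional matching of $\tilde{G}$ by averaging over the fibers of the blow-up projection. Label the vertices of $\tilde{R}$ as $v_1,\ldots,v_k$, and for each $i$ let $v_i^1,\ldots,v_i^m$ be the $m$ copies of $v_i$ sitting in $V(m\times \tilde{R})\supset V(\tilde{G})$. Every edge of $\tilde{G}$ is of the form $v_i^sv_j^t$ for some edge $v_iv_j\in E(\tilde{R})$; this gives a natural projection $\pi: E(\tilde{G})\to E(\tilde{R})$.

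Let $M$ be a fractional matching of $\tilde{G}$ with $\|M\|_1=\nu^*(\tilde{G})$, which exists by (the easy direction of) Theorem~\ref{thm:HalfIntegralMatching}. Define $M':E(\tilde{R})\to[0,1]$ by
\[
M'(v_iv_j)\;=\;\frac{1}{m}\sum_{(s,t)\in[m]\times[m]} M(v_i^sv_j^t),
\]
with the convention that $M(e')=0$ whenever $e'\notin E(\tilde{G})$. To verify that $M'$ is a fractional matching of $\tilde{R}$, fix any vertex $v_i$ and compute
\[
\sum_{e\ni v_i} M'(e)\;=\;\frac{1}{m}\sum_{s=1}^{m}\sum_{e'\in E(\tilde{G}),\; e'\ni v_i^s} M(e')\;\le\;\frac{1}{m}\cdot m\cdot 1\;=\;1,
\]
where each inner sum is at most $1$ because $M$ is a fractional matching of $\tilde{G}$ and $v_i^s$ is a vertex of $\tilde{G}$ (or the sum is empty, in which case it is $0$). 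To ensure the values $M'(e)$ lie in $[0,1]$, observe that the same computation applied to either endpoint of $e$ shows $M'(e)\le 1$. Finally,
\[
\|M'\|_1\;=\;\frac{1}{m}\sum_{e'\in E(\tilde{G})} M(e')\;=\;\frac{\nu^*(\tilde{G})}{m},
\]
which yields $\nu^*(\tilde{R})\ge \|M'\|_1 = \nu^*(\tilde{G})/m$.

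There is essentially no obstacle here: the argument is a one-line averaging, and the bound $m$ in the denominator reflects exactly that each vertex of $\tilde{R}$ carries $m$ fiber copies each of which can contribute weight up to $1$ to $M$. No use of half-integrality is needed; the existence of a fractional matching of weight $\nu^*(\tilde{G})$ suffices.
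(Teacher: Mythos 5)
Your proof is correct and matches the paper's argument exactly: average the given fractional matching of $\tilde{G}$ over the fibers of the projection $\pi$ to obtain a fractional matching of $\tilde{R}$ of weight $\nu^*(\tilde{G})/m$. (One trivial quibble: the existence of a weight-$\nu^*(\tilde{G})$ fractional matching comes directly from the definition of $\nu^*$ as an attained maximum, not from Theorem~\ref{thm:HalfIntegralMatching}; as you note yourself, half-integrality is not needed.)
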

\begin{proof}
Suppose that a fractional matching $M$ in $\tilde{G}$ is given. We can then
define a fractional matching $M_{\tilde{R}}$ in $\tilde{R}$ by defining its
weight on an edge $AB\in E(\tilde{R})$ as 
$$M_{\tilde{R}}(AB)=\frac1{m}\sum_{a\in A,b\in B,ab\in
E(\tilde{G})}M(ab)\;.$$ This is indeed a fractional matching as 
for each $A\in V(\tilde R)$ we have
\[
\sum_{B: AB\in E(\tilde R)}M_{\tilde{R}}(AB) = \frac{1}{m}\sum_{a \in
A}\sum_{b: ab \in E(\tilde{G})}M(ab) \leqslant \frac{1}{m}\sum_{a \in
A}\sum_{b:ab \in E(\tilde{G})}M(ab)\le \frac{1}{m}\sum_{a \in
A}1\le 1\;.
\] 
Moreover, 
\[ 
\|M_{\tilde R}\|_1 = \frac 1m \sum_{e \in E(\tilde G)} M(e)
\;,
\]
and the lemma follows.
\end{proof}


\section{Robustness and iron connectivity}\label{sec:connectivity}

We introduce two non-standard notions of connectivity: robustness and iron connectivity. These notions turn out to be suitable in combination with  the Regularity Lemma --- roughly speaking, when a graph has high iron connectivity, then the reduced graph corresponding to it also has high iron connectivity.

We say that a graph $G$ is \emph{$\ell$-robust} if $G$ remains connected even after removal of an arbitrary set $E'\subset E(G)$ with $\Delta(E')\le \ell$. We say that $G$ is \emph{$\ell$-iron} if $G$ stays connected after simultaneous removal of an arbitrary edge-set $E'\subset E(G)$ with $\Delta(E')\le \ell$ and an arbitrary vertex-set $U\subset V(G)$ with $|U|\le \ell$.

Our main aim in this chapter is to show that every dense vertex-transitive graph can be partitioned into not too many isomorphic vertex-transitive subgraphs which have high iron connectivity. This is stated in the following theorem.

\begin{theorem}\label{lem:goodIslands}
For every $\alpha>0$ there exist $\beta,R,N_0>0$ such that the following holds: Suppose $G$ is a vertex-transitive graph of order $n>N_0$ and valency at least $\alpha n$. Then there exists a partition $V(G)=V_1 \dcup \cdots \dcup V_r$ into $r<R$  parts such that all the graphs $G[V_i]$ are isomorphic to a graph $G'$ which is vertex-transitive and $(\beta n)$-iron. Furthermore, for each $g \in \Aut(G)$ and each $1 \leqslant j \leqslant r$ we have $g(V_j) \in \{V_1,\ldots,V_r\}$. 
\end{theorem}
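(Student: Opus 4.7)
My plan rests on the theory of systems of imprimitivity for the transitive action of $\Aut(G)$ on $V(G)$. A system of imprimitivity is a partition $\{V_1,\ldots,V_r\}$ of $V(G)$ into equal-sized non-empty blocks that is permuted by $\Aut(G)$; equivalently, it arises as the orbit partition of a subgroup lying between a vertex stabiliser and $\Aut(G)$. For any such partition the setwise stabiliser of $V_i$ acts transitively on $V_i$, hence $G[V_i]$ is vertex-transitive and all the $G[V_i]$ are mutually isomorphic. Thus the clauses of the theorem about $G'$ being vertex-transitive and about $\Aut(G)$ preserving $\{V_1,\ldots,V_r\}$ come for free from any system of imprimitivity, and only the iron-connectivity of the blocks requires real work.

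Set $\gamma:=\alpha/2$ and let $\mathcal{F}$ be the family of systems of imprimitivity $\{V_1,\ldots,V_r\}$ in which every block satisfies $\delta(G[V_i])\ge\gamma n$. The trivial partition $\{V(G)\}$ lies in $\mathcal{F}$, and any member of $\mathcal{F}$ has block size at least $\gamma n+1$, giving at most $R:=\lceil 2/\alpha\rceil$ blocks. Choose $\mathcal{P}^*=\{V_1,\ldots,V_r\}\in\mathcal{F}$ with the smallest block size. The main claim is that for a suitably small $\beta\ll\gamma$ every block $G[V_i]$ of $\mathcal{P}^*$ is $(\beta n)$-iron. I argue by contradiction: suppose $G[V_1]$ is not $(\beta n)$-iron, so there are $U\subset V_1$ and $E'\subset E(G[V_1])$ with $|U|,\Delta(E')\le\beta n$ such that $G[V_1]-U-E'$ is disconnected with components $C_1,\ldots,C_s$, $s\ge 2$. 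Writing $\Gamma:=\Aut(G)_{V_1}$ for the setwise stabiliser of $V_1$, which acts transitively on $V_1$, the aim is to produce a proper $\Gamma$-invariant refinement of $V_1$ into equal-sized sub-blocks each of internal minimum degree at least $\gamma n$: transporting such a refinement through $\Aut(G)$ yields an element of $\mathcal{F}$ strictly finer than $\mathcal{P}^*$, contradicting minimality.

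To construct this $\Gamma$-invariant refinement I would symmetrise the cut over $\Gamma$. One concrete approach is to look at the $\Gamma$-orbit of the cut data $(U,E')$ and, from the collection of induced component partitions, distill a $\Gamma$-invariant equivalence on $V_1$; because $\Gamma$ acts transitively on $V_1$, it permutes the resulting equivalence classes transitively, so these classes are equipotent and form a genuine system of imprimitivity. Non-triviality is ensured by the cut $(U,E')$ itself: two vertices in distinct components $C_i$, $C_j$ disjoint from $U$ cannot end up in the same new block, and such vertices exist because each $C_i$ has size at least $\gamma n-\beta n$ by the internal minimum-degree hypothesis on $G[V_1]$. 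Showing that every new sub-block retains internal min-degree at least $\gamma n$ is the most delicate part: it relies on an edge count exploiting $|U|,\Delta(E')\le\beta n\ll\gamma n$, together with vertex-transitivity of $G[V_1]$ under $\Gamma$ to average the loss across the $\Gamma$-orbit of the cut.

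The main obstacle is precisely this minimum-degree bookkeeping: converting the sparse local cut $(U,E')$ into a uniform lower bound on the internal degree of each new block is the technical heart of the argument. One is forced to choose the witness cut $(U,E')$ extremally, for instance minimising some natural measure of its size, and then quantify how much internal degree can be shaved off by $\Gamma$-symmetrisation. Calibrating the parameter hierarchy $\beta\ll\gamma\ll\alpha$ so that this bookkeeping closes and the refined partition genuinely lies in $\mathcal{F}$ is where the bulk of the work resides.
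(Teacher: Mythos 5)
Your framing via systems of imprimitivity is a correct and natural way to phrase what the paper does implicitly (the $\ell$-islands of the paper are exactly such a system), but the proposal has two genuine gaps that prevent it from closing, and the paper avoids both by taking a two-stage route.

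\textbf{The vertex-removal problem.} You want to build a $\Gamma$-invariant equivalence on $V_1$ by intersecting, over all $g\in\Gamma$, the component partitions of $G[V_1]$ after removing the shifted cut $(g^{-1}(U),g^{-1}(E'))$. But an iron cut removes vertices as well as edges, and the components of $G[V_1]-g^{-1}(U)-g^{-1}(E')$ do not contain the vertices of $g^{-1}(U)$. Since $\Gamma$ is transitive on $V_1$ and $U\neq\emptyset$, \emph{every} vertex of $V_1$ lies in $g^{-1}(U)$ for some $g$, so the naive ``put removed vertices in singleton classes and take the common refinement'' collapses to the all-singletons partition. Any ad hoc fix that ignores those $g$ for which $g(u)\in U$ or $g(v)\in U$ destroys transitivity of the relation. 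The paper sidesteps this entirely: it first defines $\sim_{(\ell)}$ using \emph{edge}-removal only (Lemma~\ref{lem:robustlyadjacent}), so components genuinely partition the vertex set and the relation is automatically an $\Aut(G)$-invariant equivalence; it then proves \emph{separately} that a robust vertex-transitive graph is iron (Lemma~\ref{lem:RobustIron}), via an entirely different iterated ``intersect a separator with an automorphic image of itself'' argument that shrinks $A_i$ geometrically.

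\textbf{The fixed-threshold problem.} Your family $\mathcal{F}$ demands internal minimum degree at least $\gamma n$ with a \emph{fixed} $\gamma=\alpha/2$ measured against the original $n$, and your contradiction requires the refined partition to remain in $\mathcal{F}$. Since the refinement necessarily shaves off a positive amount of degree (each vertex loses its edges to the other new sub-blocks, at least $\Omega(\beta n)$ of them), the internal minimum degree strictly decreases; a minimal $\mathcal{P}^*\in\mathcal{F}$ could have blocks with internal minimum degree exactly $\gamma n$, and then no proper refinement can stay in $\mathcal{F}$, so minimality yields no contradiction. No hierarchy $\beta\ll\gamma\ll\alpha$ fixes this, because the obstruction is the hard wall at $\gamma n$, not the size of the loss. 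The paper resolves this by tracking the \emph{relative} density $\alpha_i$ (internal degree over \emph{block} size), which strictly increases by a factor $4/3$ at each refinement (Lemma~\ref{lem:IFnot}), guaranteeing termination after $O(\log(1/\alpha))$ rounds; the absolute internal degree $\alpha_i n_i$ is allowed to drop. Your approach tracks the wrong quantity, and without something like the codeg-graph island-size lower bound (Lemma~\ref{lem:robustlyadjacent}\ref{it:RA4}) plus the escape-hatch ``$\alpha_i\geq 1$ forces termination,'' the bookkeeping you identify as ``the heart of the argument'' cannot close.
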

A typical example of a connected vertex-transitive graph $G$ with very low iron connectivity (and even robustness) is a graph formed by two disjoint cliques of order $n/2$, say on vertex sets $V_1$ and $V_2$, with a perfect matching between $V_1$ and $V_2$. The sets $V_1$ and $V_2$ are likely to be the decomposition of $G$ given by Theorem~\ref{lem:goodIslands} and indeed this is the decomposition our proof would give.

The first step towards the proof of the above theorem would be to gather
together vertices of $G$ which cannot be separated from the removal of an edge
set of small maximum degree. To this end, given two vertices $u$ and $v$ of $G$
we say that $u$ and $v$ are \emph{$\ell$-robustly adjacent} if whenever we
remove from $G$ an arbitrary set $E'\subset E(G)$ with $\Delta(E')\le \ell$ then $u$ and $v$ are still in the same connected component. We write $u\sim_{(\ell)}v$ in this case.

We shall also associate to a graph $G$ an auxiliary graph $H$, called
\emph{$k$-codeg graph of $G$}. $H$ is on the same vertex set as $G$. Two
distinct vertices $v_1, v_2\in V(H)$ are adjacent in $H$ if and only if
$|\neighbor_G(v_1)\cap \neighbor_G(v_2)|\ge k$.

The following lemma summarizes properties of the relation $\sim_{(\ell)}$, and
of $k$-codeg graphs.

\begin{lemma}\label{lem:robustlyadjacent} $ $
\begin{enumerate}[label=(\alph{*})]
  \item\label{it:RA1} The relation $\sim_{(\ell)}$ is an equivalence relation on $V(G)$.
  The equivalence classes of $\sim_{(\ell)}$ are called \emph{$\ell$-islands}.
  \item\label{it:RA2} Suppose that a vertex $v$ of $G$ has more than $\ell$
  neighbors in some $\ell$-island $L$. Then $v\in L$.
  \item\label{it:RA3} If $G$ is vertex-transitive then all $\ell$-islands induce
  mutually isomorphic, vertex-transitive graphs.
  \item\label{it:N1} If $G$ is vertex-transitive then the $k$-codeg graph $H$ of $G$ is vertex-transitive as well. We have $\deg(H)\ge \frac{\deg(G)^2}n-k$.
  \item\label{it:RA4} Suppose that $n\ge 10\alpha^{-2}$. If $G$ is a vertex-transitive graph on $n$ vertices with valency at least $\alpha n$ then each  $(\alpha^2 n/5)$-island contains at least $\alpha^2 n/2$ vertices.
\end{enumerate}
\end{lemma}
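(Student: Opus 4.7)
Parts~\ref{it:RA1} and~\ref{it:RA2} should be essentially immediate. For~\ref{it:RA1}, reflexivity and symmetry are built into the definition, while for transitivity, if $u \sim_{(\ell)} v$ and $v \sim_{(\ell)} w$ then for any $E' \subset E(G)$ with $\Delta(E') \leq \ell$, both $u$ and $w$ lie in the component of $G - E'$ that contains $v$. For~\ref{it:RA2}, given $v$ with more than $\ell$ neighbors in an $\ell$-island $L$, I would pick an arbitrary $u \in L$ and verify that $v \sim_{(\ell)} u$: for any $E'$ with $\Delta(E') \leq \ell$, at most $\ell$ of the edges from $v$ into $L$ lie in $E'$, so some neighbor $v' \in L$ of $v$ has $vv' \notin E'$; then in $G - E'$, $v$ is joined to $v'$, which lies in the same component as $u$ by virtue of $v' \sim_{(\ell)} u$.

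For~\ref{it:RA3}, the main observation is that every $g \in \Aut(G)$ preserves the relation $\sim_{(\ell)}$: if $E'$ has $\Delta(E') \leq \ell$ then so does $g(E')$, and a $u$--$v$ path in $G - E'$ maps to a $g(u)$--$g(v)$ path in $G - g(E')$. Hence automorphisms permute the $\ell$-islands. For any two islands $L, L'$ and any choice of $u \in L$, $u' \in L'$, vertex-transitivity of $G$ gives a $g \in \Aut(G)$ with $g(u) = u'$, and then $g(L)$ is an island containing $u'$, forcing $g(L) = L'$; hence $G[L] \cong G[L']$. The same argument with $L' = L$ gives, for any $u_1, u_2 \in L$, an automorphism of $G$ mapping $u_1$ to $u_2$ that also fixes $L$ setwise, so $G[L]$ is vertex-transitive.

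For~\ref{it:N1}, vertex-transitivity of $H$ follows because any $g \in \Aut(G)$ preserves common neighborhoods and therefore induces an automorphism of $H$. The degree bound comes from the double-counting identity
\[
\sum_{u \in V(G)} |\neighbor_G(v) \cap \neighbor_G(u)| \;=\; \sum_{w \in \neighbor_G(v)} \deg_G(w) \;=\; \deg(G)^2.
\]
Splitting the left-hand sum according to whether $|\neighbor_G(v) \cap \neighbor_G(u)| \geq k$ or not, bounding each term in the first (``close'') group trivially by $n$ and each term in the second group by $k-1$, and noting that there are at most $\deg_H(v) + 1$ close $u$'s (including $u = v$), I obtain $\deg(G)^2 \leq (\deg_H(v)+1)n + n(k-1)$, which rearranges to $\deg_H(v) \geq \deg(G)^2 / n - k$.

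Part~\ref{it:RA4} is where the ideas come together, and is the only step with any real content. The plan is to apply~\ref{it:N1} with $k = \alpha^2 n / 2$, yielding $\deg(H) \geq \alpha^2 n - k = \alpha^2 n / 2$, and to take $\ell = \alpha^2 n / 5$, so that $k > 2\ell$. The crucial step is then to show that $H$-adjacency implies $\ell$-robust adjacency in $G$: if $u$ and $v$ share at least $k$ common $G$-neighbors, then for any $E' \subset E(G)$ with $\Delta(E') \leq \ell$, at most $\ell$ common neighbors $w$ can have $uw \in E'$ and at most $\ell$ can have $vw \in E'$, so since $k > 2\ell$ some common neighbor $w$ provides a surviving path $u$--$w$--$v$ in $G - E'$, giving $u \sim_{(\ell)} v$. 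Thus the $\ell$-island through $v$ contains $\{v\} \cup \neighbor_H(v)$, which by the degree bound has size at least $\deg(H) + 1 \geq \alpha^2 n / 2$. The mild hypothesis $n \geq 10 \alpha^{-2}$ is there only to ensure that the various thresholds are nontrivially positive integers; the main obstacle, such as it is, lies in choosing $k$ and $\ell$ so that the degree estimate in~\ref{it:N1} is strong enough while still leaving $k > 2 \ell$, and the choice above does this with room to spare.
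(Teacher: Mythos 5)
Your proposal is correct and follows essentially the same route as the paper: parts (a)--(c) are handled by the same elementary observations, the codeg degree bound in (d) is the same double-counting argument (the paper sums over all ordered pairs and uses vertex-transitivity to deduce the per-vertex bound, while you fix $v$ directly, which is equivalent), and (e) applies (d) with $k=\alpha^2n/2$ and then deduces $\ell$-robust adjacency from $H$-adjacency exactly as in the paper. One small remark on the role of the hypothesis $n\ge10\alpha^{-2}$: it is not merely there to keep quantities positive. The paper needs it to ensure $k=\alpha^2n/2\ge 2\ell+1$, which is the precise threshold guaranteeing a surviving common neighbour after deleting $E'$; your phrasing with the strict inequality $k>2\ell$ combined with the integrality of degree and codegree counts happens to sidestep this, but it would be cleaner to note that what is really being verified is $k-2\ell\ge1$, i.e.\ $\alpha^2n/10\ge1$, which is exactly $n\ge10\alpha^{-2}$.
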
 

\begin{proof}
Parts~\ref{it:RA1}--\ref{it:RA2} are trivial. For part~\ref{it:RA3}, note that each automorphism of $G$ maps an $\ell$-island again onto an $\ell$-island. In particular, all $\ell$-islands induce mutually isomorphic graphs. Moreover, taking the set
$A\subset \Aut(G)$ of automorphisms of $G$ which map a given $\ell$-island $L$ onto
itself and considering the restriction $A_{|L}:=\{g_{|L}\::\:g\in A\}$ on $L$, we get a subgroup $A_{|L}\le \Aut(G[L])$ which witnesses vertex-transitivity of $G[L]$.

The first part of~\ref{it:N1} is obvious. For the second part we count the number of triples $(x,y,z)$ with $z$ adjacent to both $x$ and $y$ in two different ways to get
\begin{align*}
n\deg(G)^2&=\sum_{x,y\in V(G)}|\neighbor_G(x)\cap\neighbor_G(y)|\\
&\le \sum_{x,y\in
V(G),xy\in E(H)} (n-2)+ \sum_{x\in V(G)}(n-1)+\sum_{x,y\in V(G),x\neq y,xy\not\in E(H)}(k-1)\\
&=n(n-2)\deg(H)+n(n-1)+n(n-1-\deg(H))(k-1) \\
&\leqslant n^2 \deg(H) + n^2k\;,
\end{align*}
and the claim follows. 	

To prove Part~\ref{it:RA4}, consider the $(\alpha^2 n/2)$-codeg graph $H$ of $G$.
 By Part~\ref{it:N1}, $H$ is vertex-transitive of valency $\deg(H) \geqslant
\alpha^2 n/2$. Observe now that if
$|\neighbor_G(u)\cap\neighbor_G(v)|\ge2\frac{\alpha^2n}5+1$ then $u$ and $v$ lie
in the same $(\alpha^2 n/5)$-island; in particular, the conclusion applies when
$uv$ is an edge of $H$. Since $\deg(H) \geqslant \alpha^2 n/2$ we deduce that
each $(\alpha^2 n/5)$-island of $G$ contains at least $\alpha^2 n/2$ vertices.
\end{proof}

As a corollary of Lemma~\ref{lem:robustlyadjacent} we get the following.

\begin{lemma}\label{lem:IFnot}
Suppose $G$ is a vertex-transitive graph on $n$ vertices with valency at least $\alpha n$. If $G$ is not $(\alpha^4 n/40)$-robust, then there exists a partition $V(G)=V_1\dcup\ldots \dcup V_r$ with $2 \leqslant r \leqslant \frac2{\alpha^2}$ such that all the graphs $G[V_i]$ are isomorphic to the same vertex-transitive graph $G'$ of order  $n'$ and valency at least $4\alpha n'/3$.
\end{lemma}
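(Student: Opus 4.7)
The plan is to take $V_1,\ldots,V_r$ to be the $\ell$-islands of $G$ for $\ell := \alpha^4 n/40$, and to read off everything we need from Lemma~\ref{lem:robustlyadjacent}. Since $G$ is not $\ell$-robust, some edge set of maximum degree at most $\ell$ disconnects $G$, so there exist two vertices that fail to be $\ell$-robustly adjacent; the partition into $\ell$-islands given by Lemma~\ref{lem:robustlyadjacent}\ref{it:RA1} therefore has $r\ge 2$ parts. Lemma~\ref{lem:robustlyadjacent}\ref{it:RA3} immediately yields a single vertex-transitive graph $G'$ with $G[V_i]\cong G'$ for every $i$, of common order $n':=n/r$.

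To bound $r$ from above, note that $\alpha\le 1$ gives $\ell\le \alpha^2 n/5$. Since the relation $\sim_{(\cdot)}$ becomes finer as its parameter grows, each $\ell$-island is a union of $(\alpha^2 n/5)$-islands; by Lemma~\ref{lem:robustlyadjacent}\ref{it:RA4} each such island contains at least $\alpha^2 n/2$ vertices, giving $n'\ge \alpha^2 n/2$ and hence $r\le 2/\alpha^2$.

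Finally, to bound the valency, pick any $v\in V_i$; by Lemma~\ref{lem:robustlyadjacent}\ref{it:RA2}, $v$ has at most $\ell$ neighbours in each other island, so
\[
\deg(G') \;=\; \deg_{G[V_i]}(v) \;\ge\; \alpha n-(r-1)\ell.
\]
It remains to verify $\alpha n-(r-1)\ell\ge 4\alpha n/(3r)=4\alpha n'/3$; after clearing denominators this becomes $40(3r-4)\ge 3r(r-1)\alpha^3$, and using $3r-4\ge r$ (valid for $r\ge 2$) together with $r-1\le 2/\alpha^2$ it is implied by the trivially true $40\ge 6\alpha$. The only real ``obstacle'' is this numerical bookkeeping: the constant $1/40$ in the definition of $\ell$ is calibrated precisely so that the up to $(r-1)\ell\le \alpha^2 n/20$ edges a vertex can lose to other islands are absorbed by the gap between $\alpha n$ and $4\alpha n/(3r)$ even in the worst case $r\approx 2/\alpha^2$; beyond that the proof is just repeated citation of Lemma~\ref{lem:robustlyadjacent}.
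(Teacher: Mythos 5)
Your proof is correct and follows the same route as the paper: take the $(\alpha^4 n/40)$-islands, invoke Lemma~\ref{lem:robustlyadjacent} parts (RA1)--(RA4) exactly as you do, and check the arithmetic. The only (cosmetic) difference is that the paper splits the final verification into two cleaner pieces --- showing $\deg(v,V_1)\ge 2\alpha n/3$ independently of $r$ and separately using $n'=n/r\le n/2$ --- whereas you solve the inequality $\alpha n-(r-1)\ell\ge 4\alpha n/(3r)$ in one go; both are fine.
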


\begin{proof}
Let $V_1\dcup \ldots\dcup V_r$ be the $(\alpha^4 n/40)$-islands of $G$. If $r=1$ then $G$ is $(\alpha^4 n/40)$-robust and there is nothing to prove. Thus we assume that $r>1$.

Observe that since $\alpha^4/40 < \alpha^2/5$, each $(\alpha^4 n/40)$-island consists of several $(\alpha^2 n/5)$-islands. In conjunction with Part~\ref{it:RA4} of Lemma~\ref{lem:robustlyadjacent}, we get that $r \leqslant 2\alpha^{-2}$. By Part~\ref{it:RA2} of Lemma~\ref{lem:robustlyadjacent} each vertex $v\in V_1$ sends at most $\alpha^4 n/40$ edges to $V_i$ for $i\neq 1$. It follows that
\[
\deg(v,V_1)\ge\alpha n-(r-1)\frac{\alpha^4 n}{40} \ge \alpha n - \frac{\alpha^2 n}{20} \geqslant \frac{2\alpha n}3\;.
\]
On the other hand, for $n'=|V_1|$ we have $n'=\frac{n}{r}\le \frac n2$. Therefore the valency of the graph $G'=G[V_1]$ is at least $4\alpha n'/3$. This proves the lemma.
\end{proof}

Lemma~\ref{lem:IFnot} says that if $G$ is not robust then we can partition it into a few island each having higher (by a constant factor) density than $G$. Repeating this process, it will follow that every dense vertex-transitive graph can be partitioned in a symmetric way into a bounded number of robust graphs.

\begin{lemma}\label{lem:goodIslands-robust}
For every $\alpha>0$ there exist numbers $R,N_0$ and $\mu\in(0,\alpha/2)$ such that the following holds: Suppose $G$ is a vertex-transitive graph of order $n>N_0$ and valency at least $\alpha n$. Then there exists a partition $V(G)=V_1 \dcup \cdots \dcup V_r$, into $r<R$ parts such that all the graphs $G[V_i]$ are isomorphic to a graph $G'$ which is vertex-transitive and $(\mu n)$-robust. Furthermore, for each $g \in \Aut(G)$ and each $1 \leqslant j \leqslant r$ we have $g(V_j) \in \{V_1,\ldots,V_r\}$. 
\end{lemma}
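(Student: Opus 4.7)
The plan is to iterate Lemma~\ref{lem:IFnot}. Define $\alpha_0:=\alpha$ and $\alpha_{k+1}:=\tfrac{4}{3}\alpha_k$; since valency cannot exceed order, the sequence satisfies $\alpha_k\le 1$ for all $k$, so the iteration must terminate within $K:=\lceil\log_{4/3}(1/\alpha)\rceil+1$ steps. I would refine a partition of $V(G)$ iteratively as follows. At step~$0$ the partition is the trivial one $\{V(G)\}$. At step~$k$, suppose every current part induces an isomorphic copy of a vertex-transitive graph $G_k$ on $n_k$ vertices of valency at least $\alpha_k n_k$. Apply Lemma~\ref{lem:IFnot} to $G_k$: either $G_k$ is already $(\alpha_k^4 n_k/40)$-robust, in which case we output the current partition as the final one, or $G_k$ decomposes into at most $2/\alpha_k^2$ parts isomorphic to a common vertex-transitive graph $G_{k+1}$ with valency at least $\alpha_{k+1}n_{k+1}$; in the latter case refine the current partition by decomposing each of its parts accordingly (using any fixed isomorphism to $G_k$ to transport the island decomposition) and continue to step~$k{+}1$.

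The bookkeeping is straightforward. The total number of parts is bounded by $R:=\prod_{k<K}(2/\alpha_k^2)$, a constant depending only on $\alpha$; each final part has order $n_{k^*}\ge n/R$; and the final graph $G'$ is vertex-transitive and $(\alpha_{k^*}^4 n_{k^*}/40)$-robust. Since $\alpha_{k^*}\ge\alpha$ and $n_{k^*}\ge n/R$, this robustness is at least $\alpha^4 n/(40R)$, so one may take $\mu:=\alpha^4/(40R)$. Taking $N_0$ sufficiently large ensures the size-threshold hypothesis of Lemma~\ref{lem:IFnot} is satisfied at every level of the iteration (which requires $n_k\ge N_0/R$ at level~$k$).

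It remains to verify that each $g\in\Aut(G)$ permutes the final parts, and this is the one place requiring genuine care. I would prove it by induction on the iteration level. The key observation is that the refinement performed at each step is \emph{canonical}: within every current part $V_i$ the refinement decomposes $G[V_i]$ into its $(\alpha_k^4 n_k/40)$-islands, which are an intrinsic invariant of $G[V_i]$. Any $g\in\Aut(G)$ sending $V_i$ to some $V_j$ (which exists by the inductive hypothesis applied to the previous level) restricts to an isomorphism $G[V_i]\to G[V_j]$ and must therefore carry the islands of $G[V_i]$ bijectively onto those of $G[V_j]$. Composing with the inductive hypothesis shows that $g$ permutes the refined parts as well, closing the induction. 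The main obstacle, as much as there is one, is precisely this canonicity bookkeeping; all the numerical content of the statement is inherited directly from Lemma~\ref{lem:IFnot} together with the geometric growth of $\alpha_k$.
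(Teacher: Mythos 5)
Your proposal is correct and follows essentially the same approach as the paper: iterate Lemma~\ref{lem:IFnot}, bound the number of iterations by $O(\log_{4/3}(1/\alpha))$ using the geometric growth of the density, and take $R$ and $\mu$ accordingly. The one point the paper leaves as "easily checked" — that every automorphism of $G$ permutes the final parts — you handle explicitly via the canonicity of the island decomposition, which is the right argument (and note the parenthetical about transporting the decomposition through a chosen isomorphism is harmless precisely because the $\ell$-island decomposition is intrinsic to each part).
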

\begin{proof}
We first set up necessary constants. Let $Q=\lceil\log_{4/3}(\frac1\alpha)\rceil$, and $\alpha_i=(4/3)^i\alpha$ for $i=0,1,\ldots$. Let $R=\prod_{i=0}^Q(2\alpha_i^{-2})$, and $\mu=\alpha^4/(40R)$. Last, let $N_0$ be sufficiently large.

Set $G_0=G$, and $n_0=n$. Inductively, in steps $i=0,1,\ldots$ we either get that $G_i$ is $(\alpha_i^4n_i/40)$-robust, or by Lemma~\ref{lem:IFnot} that there is a partition $V(G_i)=V_{i,1}\dcup\ldots\dcup V_{i,r_i}$ (with $r_i\le 2/\alpha_i^2$) such that each graph $G_i[V_{i,j}]$ ($j=1,\ldots,r_i$) is isomorphic to a vertex-transitive graph $G_{i+1}$ of order $n_{i+1}$, thus allowing a next step of the iteration. By induction, and the properties of the partition output by Lemma~\ref{lem:IFnot} the vertex set of the original graph $G$ can be partitioned into vertex-sets inducing graphs isomorphic to $G_{i+1}$. Observe that it is guaranteed by Lemma~\ref{lem:IFnot} and induction that $G_{i+1}$ has valency at least $\alpha_{i+1}n_{i+1}$.

Since $\alpha_Q\ge 1$, the above procedure must terminate in step $i_\mathrm{stop}< Q$. It is easily checked that the partition of $V(G)$ into copies of $G_{i_\mathrm{stop}}$ satisfies the assertions of the lemma.
\end{proof}

Observe that $\ell$-iron connectivity implies $\ell$-robustness. If the converse was true then we could immediately deduce Theorem~\ref{lem:goodIslands} from Lemma~\ref{lem:goodIslands-robust}. However, the converse is very far from being true. For example, the union of two cliques of size $2m$ having exactly one common vertex is $(m-1)$-robust but it is not even $1$-iron as the common vertex of the two cliques is a cut-vertex. The following lemma gives a partial converse for the class of vertex-transitive graphs.

\begin{lemma}\label{lem:RobustIron}
Let $G$ be a $(\mu n)$-robust vertex-transitive graph of order $n$ and valency at least $\alpha n$, for some $\alpha$, $\mu$, with $2\alpha/3>\mu>0$. Let $\lambda = \min\left\{\frac{\alpha}{2^{3 + 2/\alpha}}, \frac{\mu}{2^{2 + 2/\alpha}} \right\}$. Then $G$ is $(\lambda n)$-iron.
\end{lemma}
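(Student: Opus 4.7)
The approach is by contradiction. Suppose $G$ is not $(\lambda n)$-iron; then there exist $U \subseteq V(G)$ with $|U| \le \lambda n$ and $E' \subseteq E(G)$ with $\Delta(E') \le \lambda n$ such that $G - U - E'$ is disconnected. Let $V(G) \setminus U = A \sqcup B$ be a bipartition for which every $G$-edge between $A$ and $B$ lies in $E'$, and assume $|A| \le |B|$. A standard double-count using the valency $\ge \alpha n$ gives $|A|, |B| \ge \alpha n - 2\lambda n$: each $a \in A$ sends at most $\lambda n$ edges into $B$ (all such edges must lie in $E'$) and at most $|U| \le \lambda n$ edges into $U$, leaving $\ge \alpha n - 2\lambda n$ edges inside $A$.

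The goal is to construct a bipartition $V(G) = A^* \sqcup B^*$ extending $A, B$ whose edge cut $E'' := E_G(A^*, B^*)$ satisfies $\Delta(E'') \le \mu n$; then $G - E''$ is disconnected, contradicting $(\mu n)$-robustness. Every $a \in A$ (and symmetrically $b \in B$) will have cut-degree at most $\lambda n + |U| \le 2\lambda n \le \mu n$, so the work is to assign each $u \in U$ to a side without producing a large cut-degree at $u$. If either $\deg(u, A)$ or $\deg(u, B)$ is small the assignment is clear; the difficulty is handling the \emph{central} vertices with large neighborhoods on both sides.

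I handle central vertices by a doubling iteration. Set $A_0 := A$, $B_0 := B$, $U_0 := U$, and at round $i$ use threshold $T_i := 2^i \lambda n$. Move each $u \in U_i$ with $\deg(u, B_i) \le T_i$ to $A_{i+1}$, and each $u$ with $\deg(u, A_i) \le T_i$ to $B_{i+1}$; the surviving vertices form $U_{i+1}$. A direct bookkeeping shows that the cut-degree in the eventual $E''$ at a vertex moved in round $i$ is at most $T_i + |U|$, while at original $A$ or $B$ vertices it remains bounded by $2\lambda n$. The key observation driving termination is
\[
\deg(u, A_i) + \deg(u, B_i) \ge \deg_G(u) - |U_i| \ge \alpha n - \lambda n,
\]
so as soon as $T_i \ge (\alpha n - \lambda n)/2$, no $u \in U_i$ can satisfy both $\deg(u, A_i) > T_i$ and $\deg(u, B_i) > T_i$, forcing $U_{i+1} = \emptyset$.

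The main obstacle is aligning the doubling threshold with the requirement $\Delta(E'') \le \mu n$. Because $T_i$ doubles each round, termination occurs at a round $i^* = O(\log(\alpha/\lambda))$; the choice $\lambda \le \alpha / 2^{3+2/\alpha}$ is calibrated so that $i^* \le 2 + 2/\alpha$, giving $T_{i^*} \le 2^{2+2/\alpha}\lambda n$. Combined with $\lambda \le \mu / 2^{2+2/\alpha}$, this yields $T_{i^*} + \lambda n \le \mu n$, whence $\Delta(E'') \le \mu n$ and the desired contradiction follows. The two constraints on $\lambda$ play complementary roles: the first controls how many rounds the iteration may need before it terminates, while the second ensures that the final threshold remains below $\mu n$.
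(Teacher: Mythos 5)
Your approach is genuinely different from the paper's, and unfortunately it has a gap that the paper's use of vertex-transitivity is specifically designed to plug.

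The core problem is that your doubling iteration is a purely degree-counting argument that does not invoke vertex-transitivity at all (you only use regularity, i.e.\ that every vertex has degree at least $\alpha n$). Nothing in the argument prevents a vertex $u\in U$ from being genuinely \emph{central}, with $\deg(u,A)\approx\deg(u,B)\approx \alpha n/2$. Such a vertex survives in $U_i$ until the threshold $T_i$ reaches roughly $(\alpha n-\lambda n)/2$, at which point it is forcibly assigned, and its cut-degree in $E''$ is of order $\alpha n/2$. This exceeds $\mu n$ whenever $\mu<\alpha/2$, so you get no contradiction with $(\mu n)$-robustness. Your arithmetic conceals this: you bound $i^*\le 2+2/\alpha$ (and hence $T_{i^*}\le 2^{2+2/\alpha}\lambda n$) using the inequality $\lambda\le\alpha/2^{3+2/\alpha}$, but that inequality runs the \emph{wrong} way for this purpose --- it is an upper bound on $\lambda$, whereas bounding $i^*$ from above requires a \emph{lower} bound on $\lambda$. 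When the second term $\mu/2^{2+2/\alpha}$ in the minimum is the binding one (i.e.\ when $\mu$ is small compared to $\alpha$), $\lambda$ can be much smaller, the iteration runs for many more rounds, and $T_{i^*}$ stays pinned near $(\alpha n-\lambda n)/2$ rather than near $\mu n$. Also note that your choice of $\lambda$ plays no actual role in controlling $T_{i^*}$: the forced-termination threshold $T_{i^*}$ is always within a factor $2$ of $(\alpha n-\lambda n)/2$, irrespective of $\lambda$, so the right-hand side $\mu n$ is simply the wrong target unless $\mu\ge\alpha/2$.

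The paper's proof is structured precisely to avoid this scenario. It runs a different iteration --- not assigning vertices of $U$ by degree thresholds, but repeatedly replacing $(A_i,U_i,B_i)$ by $(A_i\cap g(A_i),\,U_i\cup g(U_i),\,\cdots)$ using an automorphism $g$ chosen to witness a failure of a certain ``termination condition''. This intersection step strictly \emph{shrinks} $A_i$ by at least $\alpha n/2$ each round, which is what gives the $2/\alpha$ bound on the number of rounds. Once the termination condition holds, a second application of vertex-transitivity (Claim on splitting $U$ in the paper) shows that \emph{every} $u\in U$ has either $\deg(u,A)\ge d-\tfrac34\mu n$ or $\deg(u,B)\ge d-\tfrac34\mu n$; that is, there are no central vertices. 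This is exactly the fact your argument silently needs but cannot establish, and it is false for general regular graphs --- it is the one place where vertex-transitivity (beyond mere regularity) is indispensable. Without it, no assignment rule for the vertices of $U$ can guarantee the cut stays below $\mu n$, and indeed the lemma itself is not expected to hold for regular non-vertex-transitive graphs.
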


Before diving into the proof of Lemma~\ref{lem:RobustIron} let us give a
heuristic why the lemma ought to hold. The graph $G$ is robust by the
assumptions of the lemma. On the other hand it is
known~(\cite[Theorem~3.4.2]{Godsil:Book}) that connected
vertex-transitive graphs of high valency have high vertex connectivity. Therefore one can hope for a combination of the two properties, that is for iron connectivity.

\begin{proof}[Proof of Lemma~\ref{lem:RobustIron}]
Let $d\ge \alpha n$ be the valency of $G$. Suppose for contradiction that $G$ is not $(\lambda n)$-iron. That is, we have a partition $V(G)=A_0\dcup U_0\dcup B_0$, $|U_0|\le \lambda n$, $\Delta_G(A_0,B_0)\le \lambda n$. We proceed with an iterative procedure described below. For $i\ge 0$ we are given a partition $V(G)=A_i\dcup U_i\dcup B_i$. We further have the following properties:
\begin{enumerate}
  \item[$\textrm{(I1)}_i$] $|U_i|\le 2^i\lambda n$,
  \item[$\textrm{(I2)}_i$] $\Delta_G(A_i,B_i)\le 2^i\lambda n$, and
  \item[$\textrm{(I3)}_i$] $0<|A_i|\le n-\frac{i\alpha n}2$.
\end{enumerate}
We terminate this iterative procedure when for each $g\in \Aut(G)$, if there is
an $a\in A_i$ such that $g(a)\in A_i$ then for each $b\in B_i$ we have that
$g(b)\not\in A_i$. Otherwise, as we shall show below, we can produce a partition $V(G)=A_{i+1}\dcup U_{i+1}\dcup B_{i+1}$ satisfying $\textrm{(I1)}_{i+1}$, $\textrm{(I2)}_{i+1}$, and $\textrm{(I3)}_{i+1}$. Note that from $\textrm{(I3)}$ it follows that we must terminate in $i_\mathrm{stop}<\frac2\alpha$ steps.
  
Suppose we did not terminate in step $i$. Then there exists $g\in \Aut(G)$, $a\in A_i$, $b\in B_i$ such that $g(a), g(b)\in A_i$. Observe that $\textrm{(I2)}_i$ gives $|\neighbor(b)\setminus (B_i\cup U_i)|\le 2^i\lambda n$, and consequently with the help of $\textrm{(I1)}_i$ we have $|\neighbor(b)\setminus B_i|\le 2^{i+1}\lambda n$. Similarly, $|\neighbor(g(b))\setminus A_i|\le 2^{i+1}\lambda n$. We conclude that
\begin{equation}\label{eq:AigBi}
\begin{aligned}
|A_i\cap g(B_i)| &\ge |\neighbor(g(b))|-|\neighbor(g(b))\setminus A_i|-|\neighbor(g(b))\setminus g(B_i)|\\ 
                 &= d - |\neighbor(g(b))\setminus A_i|-|\neighbor(b)\setminus B_i| \\
                 &\ge \alpha n - 2^{i+2}\lambda n \ge \frac{\alpha n}2,
\end{aligned}
\end{equation}
where the last inequality follows since $\alpha \ge 2^{3 + 2/\alpha} \lambda \ge 2^{3 + i}\lambda$.

Define $A_{i+1}=A_i\cap g(A_i)$, $U_{i+1}=U_i\cup g(U_i)$, and $B_{i+1}=(B_i\cup
g(B_i))\setminus U_{i+1}$. This is a partition of $V(G)$ (see
Figure~\ref{fig:interectionsVT}).
\begin{figure}[ht]
    \centering
    \includegraphics{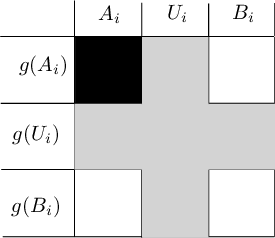}
     \caption{The sets $A_{i+1}$, $U_{i+1}$ and $B_{i+1}$ as
     intersections of the sets $A_i$, $U_i$, $B_i$, $g(A_i)$, $g(U_i)$, and
     $g(B_i)$. The set $A_{i+1}$ is represented by black, $U_{i+1}$ by grey, and $B_{i+1}$ by white.}
\label{fig:interectionsVT}
\end{figure}
$\textrm{(I1)}_{i+1}$ and $\textrm{(I2)}_{i+1}$ are obviously satisfied. The lower bound in $\textrm{(I3)}_{i+1}$ follows from the fact that $g(a)\in A_i\cap g(A_i)$. The upper bound is then established through the following chain of inequalities:
\[
|A_i\cap g(A_i)|\le |A_i|-|A_i\cap g(B_i)|\leByRef{eq:AigBi}|A_i|-\frac{\alpha n}2\;.
\]
This finishes the iterative step.

We now deal with the situation of termination in the step $i_\mathrm{stop}<\frac2\alpha$. For simplicity, we write $A=A_{i_\mathrm{stop}}$, $B=B_{i_\mathrm{stop}}$, and $U=U_{i_\mathrm{stop}}$. We have 
\begin{equation}\label{eq:WH}
|U|\le 2^{i_\mathrm{stop}} \lambda n < 2^{2/\alpha} \lambda n \leqslant \frac14\mu n \quad\mbox{and similarly}\quad \Delta_G(A,B)\le \frac14\mu n\;.
\end{equation}
Furthermore, we have
\begin{equation}\label{eq:star}
\text{For every $g\in \Aut(G)$, if $g(a')\in A$ for some $a'\in A$, then $g(b')\not\in A$ for each $b'\in B$.}
\end{equation}
We first prove that each vertex $u\in U$ has either almost all its neighbors in $A$, or in $B$.

\begin{AuxiliaryCl}\label{AC:splitU}
For each $u \in U$, either $|\neighbor(u)\cap A|\ge d-\frac34\mu n$, or $|\neighbor(u)\cap B|\ge d-\frac34\mu n$.
\end{AuxiliaryCl}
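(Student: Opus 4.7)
The plan is to argue by contradiction: assume a vertex $u \in U$ violating the claim, use vertex-transitivity to transplant $u$ into $A$, and then trigger the termination property~\eqref{eq:star} to clash with the bipartite degree bound.

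First I would convert the hypothesis into a strong lower bound on $u$'s neighbors in both parts. If $|\neighbor(u)\cap A| < d-\tfrac{3}{4}\mu n$ and $|\neighbor(u)\cap B| < d-\tfrac{3}{4}\mu n$, then since $|\neighbor(u)\cap U|\le |U|\le \tfrac14\mu n$ by~\eqref{eq:WH}, decomposing $d = |\neighbor(u)\cap A| + |\neighbor(u)\cap B| + |\neighbor(u)\cap U|$ forces each of $|\neighbor(u)\cap A|$ and $|\neighbor(u)\cap B|$ to exceed $\tfrac12\mu n$.

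Next, invoking the invariant (I3) (which guarantees $A\neq\emptyset$) together with vertex-transitivity, I would pick $g\in\Aut(G)$ with $v:=g(u)\in A$. Since $g$ preserves neighborhoods, $v$ has more than $\tfrac12\mu n$ neighbors in each of $g(A)$ and $g(B)$. Among $v$'s neighbors in $g(A)$, at most $\tfrac14\mu n$ can lie in $B$ (because $v\in A$ and $\Delta_G(A,B)\le \tfrac14\mu n$) and at most $\tfrac14\mu n$ in $U$, so at least one must lie in $A\cap g(A)$. Hence $A\cap g(A)\neq\emptyset$, and~\eqref{eq:star} forces $A\cap g(B)=\emptyset$. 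Every one of $v$'s $>\tfrac12\mu n$ neighbors in $g(B)$ therefore sits in $B\cup U$; discarding those in $U$ leaves more than $\tfrac14\mu n$ neighbors of $v\in A$ in $B$, contradicting $\Delta_G(A,B)\le \tfrac14\mu n$.

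The main obstacle I anticipate is recognising that~\eqref{eq:star} is the tool that must be invoked, and that the correct way to activate it is to displace $u$ into $A$ by an automorphism so as to make the two partitions $(A,B,U)$ and $(g(A),g(B),g(U))$ interact. Once that move is made, the three slacks of size $\tfrac14\mu n$---the bound on $|U|$, the bound on $\Delta_G(A,B)$, and the shortfall $\tfrac34\mu n = 3\cdot\tfrac14\mu n$ built into the failure hypothesis---interlock tightly to yield the contradiction.
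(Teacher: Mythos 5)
Your proposal is correct and follows essentially the same route as the paper: both proofs derive $|\neighbor(u)\cap A|,|\neighbor(u)\cap B|>\tfrac12\mu n$, transport $u$ into $A$ via an automorphism $g$, and use the slacks $|U|\le\tfrac14\mu n$ and $\Delta_G(A,B)\le\tfrac14\mu n$ to conclude $A\cap g(A)\neq\emptyset$ before invoking~\eqref{eq:star}. The only cosmetic difference is that the paper first also shows $A\cap g(B)\neq\emptyset$ and then contradicts~\eqref{eq:star} directly, whereas you apply~\eqref{eq:star} one step earlier to force $A\cap g(B)=\emptyset$ and land the contradiction on the degree bound instead; this is the same calculation re-ordered.
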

\begin{proof}[Proof of Claim~\ref{AC:splitU}]
As $d-\frac34\mu n>\frac d2$, we have that at most one of the assertions of the claim can hold for a given vertex $u\in U$. Suppose now the statement fails for some $u\in U$. Then we have
\begin{align}
|\neighbor(u)\cap A| = |\neighbor(u)| - |\neighbor(u) \cap B| - |\neighbor(u) \cap U| &>\frac\mu2 n\;, \mbox{and}\label{eq:NuA}\\
|\neighbor(u)\cap B| = |\neighbor(u)| - |\neighbor(u) \cap A| - |\neighbor(u) \cap U| &> \frac\mu2 n\;. \label{eq:NuB}
\end{align}
Let $a\in A$ be arbitrary and take a $g\in \Aut(G)$ such that $g(u)=a$. We then
have $\neighbor(a)= \neighbor(g(u)) = g(\neighbor(u))$, and in particular
$g(\neighbor(u)\cap A)\subset \neighbor(a)$.

We claim that there exists an $a'\in \neighbor(u)\cap A$ such that $g(a')\in A$. Indeed, if this was not the case, then $g(x)\in B\cup U$ for each $x\in\neighbor(u)\cap A$. Therefore, we would then have
\begin{align*}
|\neighbor(a)\cap (B\cup U)|&=|g(\neighbor(u))\cap (B\cup U)|\ge |(g(\neighbor(u) \cap A)\cap (B\cup U)|\\
&=|g(\neighbor(u)\cap A)| = |\neighbor(u) \cap A| \gByRef{eq:NuA} \mu n/2\;,
\end{align*}
contradicting~\eqref{eq:WH}.

Similarly, using~\eqref{eq:NuB} and the fact that $g(\neighbor(u)\cap B)\subset g(\neighbor(u)) = \neighbor(a)$, we get that there exists a $b'\in\neighbor(u)\cap B$ such that $g(b')\in A$. The properties of $g,a'$ and $b'$ contradict~\eqref{eq:star}.
\end{proof}

By Claim~\ref{AC:splitU} we have a partition $U=U_A\dcup U_B$, where $U_A=\{u\in U\::\: \deg(u,A)\ge d-\frac34\mu n\}$ and $U_B=\{u\in U\::\: \deg(u,B)\ge d-\frac34\mu n\}$. Define $V_1=A\cup U_A$ and $V_2=B\cup U_B$. We have $V_1,V_2\neq\emptyset$. It is straightforward to verify that $\Delta_G(V_1,V_2)\le \mu n$. This contradicts the fact that $G$ is $(\mu n)$-robust.
\end{proof}

Observe now that Lemma~\ref{lem:RobustIron} together with Lemma~\ref{lem:goodIslands-robust} immediately imply Theorem~\ref{lem:goodIslands}.

\smallskip
We conclude this section with three easy lemmas which are tailored for 
applications later in the proof of Theorem~\ref{thm:RobustOKbipfar}.
\begin{lemma}\label{lem:conK2}
Suppose that a graph $H$ is $\ell$-iron. Then the 2-blow-up $2\times H$ is also
$\ell$-iron.\footnote{In fact it is not much more difficult to show that the 2-blow-up is $2\ell$-iron but $\ell$-iron connectivity is enough for our purposes and has a clearer proof.}
\end{lemma}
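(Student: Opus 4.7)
The plan is to argue by contradiction. Suppose that for $\tilde H := 2 \times H$ there exist $U \subset V(\tilde H)$ with $|U| \le \ell$ and $E' \subset E(\tilde H)$ with $\Delta(E') \le \ell$ such that $\tilde H - U - E'$ is disconnected, and let $C$ be a proper nonempty union of some of its connected components. My aim is to project $(U, E')$ down to a pair $(U'', E^*)$ in $H$ with $|U''| \le \ell$ and $\Delta(E^*) \le \ell$ such that $H - U'' - E^*$ is already disconnected, contradicting the $\ell$-iron hypothesis on $H$.

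For the projection, write $v^1, v^2$ for the two copies of each $v \in V(H)$ in $\tilde H$, set $U'' = \{v \in V(H) : v^1, v^2 \in U\}$ (so trivially $|U''| \le \ell/2$), and for each $v \in V(H) \setminus U''$ let $\sigma(v) = \{i : v^i \notin U\}$ be the nonempty set of available copies. Partition $V(H) \setminus U''$ into three classes: $A$, those vertices all of whose available copies lie in $C$; $B$, those all of whose available copies lie outside $C$; and $M$, those with one available copy on each side of $C$ (which forces $\sigma(v) = \{1,2\}$). Both $A \cup M$ and $B \cup M$ are nonempty, since $C$ is a proper nonempty subset of $V(\tilde H) \setminus U$. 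The key observation is that every edge of $\tilde H - U$ straddling the cut $C$ must lie in $E'$.

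If $A \neq \emptyset$ (the symmetric case $B \neq \emptyset$ is handled identically), take $E^* = E_H(A, B \cup M)$. For $v \in A$, fix any $i \in \sigma(v)$; each neighbour $u$ of $v$ in $B \cup M$ contributes at least one crossing edge incident to $v^i$ (namely $v^i u^j$ where $j$ is any element of $\sigma(u)$ if $u \in B$, or the unique index with $u^j \notin C$ if $u \in M$), and so the degree of $v$ in $E^*$ is at most $\deg_{E'}(v^i) \le \ell$. An analogous count at any $v \in B \cup M$, using a copy of $v$ lying outside $C$, establishes $\Delta(E^*) \le \ell$; meanwhile $H - U'' - E^*$ splits off the nonempty set $A$ from the nonempty set $B \cup M$. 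In the remaining case $A = B = \emptyset$, so $V(H) \setminus U'' = M$, each edge of $H[M]$ contributes two disjoint crossing edges of $\tilde H - U$, giving $\deg_{H[M]}(v) \le \ell$ for every $v \in M$; taking $E^* = E(H[M])$ leaves the edgeless graph on $M$, which is disconnected whenever $|M| \ge 2$. The residual configuration $|M| \le 1$ is ruled out because $\ell$-iron connectivity forces $\delta(H) \ge \ell + 1$ (otherwise a low-degree vertex can be isolated by deleting its $\le \ell$ incident edges), whereas $|V(H)| = |U''| + |M| \le \ell/2 + 1$ is incompatible with this lower bound as soon as $|V(H)| \ge 2$.

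The main obstacle, and the reason the naive projection ``declare $vu$ of $H$ broken when all four lifts lie in $E'$'' does not suffice, is the class $M$ of mixed vertices, in which a single $v \in V(H)$ has one copy on each side of the blow-up cut. The resolution is to let the projected edge cut depend on the shape of the blow-up cut: in the main case we absorb $M$ into the $B$-side so that each mixed vertex costs only one edge per $H$-neighbour to $E^*$, and in the pure-$M$ case we exploit that every $M$-to-$M$ edge of $H$ produces two disjoint crossing edges in $\tilde H$, which is exactly what makes the degree bound $\deg_{H[M]}(v) \le \ell$ tight enough to reach a contradiction with the $\ell$-iron hypothesis.
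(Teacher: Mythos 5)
Your proof is correct, but it takes a genuinely different route from the paper's. The paper argues \emph{directly}: it first observes $\delta(H)\ge 2\ell+1$, then notes that the two ``coordinate'' copies $H_1,H_2$ of $H$ inside $2\times H$ (taking all first copies and all second copies, respectively) each remain connected after deleting any $V',E'$ of the allowed size, by the $\ell$-iron hypothesis applied to each copy; and since $\delta(H)\ge 2\ell+1$, any surviving vertex of $H_1$ still has at least one surviving edge into $H_2$, so the whole graph stays connected. Your argument instead proceeds by \emph{contradiction and projection}: given a disconnecting pair $(U,E')$ for $2\times H$, you pull it back to a disconnecting pair $(U'',E^*)$ for $H$, with the case split $A/B/M$ to handle vertices whose two lifted copies land on opposite sides of the cut. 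Both proofs implicitly exclude the degenerate case $|V(H)|=1$, which is immaterial here. The paper's direct argument is shorter and structurally cleaner (it isolates exactly what makes the blow-up robust: the two copies plus an unkillable cross-edge), while yours is a self-contained pull-back argument with more bookkeeping; it correctly identifies the mixed class $M$ as the obstruction to a naive projection and resolves it, which is the real content of the case analysis. Your proof is valid and would also serve, though the paper's is the more economical of the two.
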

\begin{proof}
Observe first, that the minimum degree of $H$ is at least $2\ell+1$. Indeed, if
there exists a vertex $v$ with $\deg(v)\le 2\ell$ then this vertex can be
isolated from the rest of the graph by deletion of at most $\ell$ edges incident
with $v$, and at most $\ell$ vertices in the neighbourhood of $v$.

Observe that there are two natural vertex disjoint copies of $H$ in $2 \times H$, say $H_1$ and $H_2$. Consider any sets $E'\subset E(2\times H)$,
with $\Delta(E')\le \ell$ and $V'\subset V(2\times H)$ with $|V'|\le \ell$. Since $H$ is $\ell$-iron, both $H_1$ and $H_2$ remain connected after the removal of $V'$ and $E'$. Since the minimum degree of $H$ is at least $2\ell+1$, then every vertex of $H_1$ has at least $2\ell+1$ neighbours in $H_2$. In particular after the removal of $V'$ and $E'$ there is still an edge between $H_1$ and $H_2$ and therefore $(2 \times H) \setminus (V'\cup E')$ is still connected. Therefore $2 \times H$ is $\ell$-iron.
\end{proof}

\begin{lemma}\label{lem:witNOTiron}
Let $R'$ be a graph on $k'$ vertices. Suppose that there exist sets
$L_1,L_2\subset V(R')$ such that $|L_1|\le \sqrt{\rho}k'$, and
$e(L_2,V(R')\setminus (L_1\cup L_2))\le \rho k'^2$. If there exists disjoint
sets $W_1, W_2\subset V(R')\setminus(L_1\cup L_2)$, such that $\neighbor(W_2) \subseteq L_1 \cup L_2$,
and $\min\{|W_1|,|W_2|\}>2\sqrt{\rho}k'$, then $R'$ is not
$(2\sqrt{\rho}k')$-iron.
\end{lemma}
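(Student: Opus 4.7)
The plan is to exhibit an explicit witness to non-iron-connectivity of $R'$: a vertex set $U$ with $|U|\le 2\sqrt{\rho}k'$ and an edge set $E'$ with $\Delta(E')\le 2\sqrt{\rho}k'$ whose removal disconnects $R'$. The key observation is that the hypothesis $\neighbor(W_2)\subseteq L_1\cup L_2$, together with $W_2\cap(L_1\cup L_2)=\emptyset$, forces $W_2$ to be independent in $R'$ and to have all of its edges entering $L_1\cup L_2$. Hence, to isolate a piece of $W_2$ from $W_1$, it suffices to sever its edges into $L_1\cup L_2$, perhaps after first deleting a few troublesome vertices.

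First I would put $L_1$ entirely into $U$, which uses at most $\sqrt{\rho}k'$ of the vertex budget and kills all $W_2$-to-$L_1$ edges. The remaining obstacle is the edge-cut between $L_2$ and $W_2$, whose total size is at most $e(L_2,V(R')\setminus(L_1\cup L_2))\le \rho k'^2$ but which may concentrate at high-degree endpoints on either side. To control this, I would trim the high-degree endpoints on \emph{both} sides of the cut: let $L_2^{\mathrm{high}}$ denote those $v\in L_2$ with $\deg(v,V(R')\setminus(L_1\cup L_2))>2\sqrt{\rho}k'$, and let $W_2^{\mathrm{high}}$ denote those $v\in W_2$ with $\deg(v,L_2)>2\sqrt{\rho}k'$. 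Double counting against the bound $\rho k'^2$ yields $|L_2^{\mathrm{high}}|,|W_2^{\mathrm{high}}|<\sqrt{\rho}k'/2$, so setting $U:=L_1\cup L_2^{\mathrm{high}}\cup W_2^{\mathrm{high}}$ still keeps $|U|\le 2\sqrt{\rho}k'$.

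Finally, I would take $E'$ to be the set of edges of $R'$ between $L_2\setminus L_2^{\mathrm{high}}$ and $W_2\setminus W_2^{\mathrm{high}}$. By the very definitions of the trimmed sets, every vertex is incident to at most $2\sqrt{\rho}k'$ edges of $E'$, so $\Delta(E')\le 2\sqrt{\rho}k'$. To verify that $R'\setminus(U\cup E')$ is disconnected, note that any $w\in W_2\setminus W_2^{\mathrm{high}}$ has no surviving neighbor: edges into $L_1\cup L_2^{\mathrm{high}}\cup W_2^{\mathrm{high}}$ are killed by the vertex deletion, edges into $L_2\setminus L_2^{\mathrm{high}}$ belong to $E'$, and $\neighbor(W_2)\subseteq L_1\cup L_2$ rules out any others (in particular there are no edges inside $W_2$). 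Since $|W_2\setminus W_2^{\mathrm{high}}|\ge|W_2|-\sqrt{\rho}k'/2>0$ and $W_1$ lies entirely outside $U$ and is nonempty, the graph $R'\setminus(U\cup E')$ has at least two components. The only thing to watch is the vertex-budget bookkeeping: the factor-two slack in the conclusion is exactly what lets the three contributions $\sqrt{\rho}k'$, $\sqrt{\rho}k'/2$, $\sqrt{\rho}k'/2$ fit inside $2\sqrt{\rho}k'$.
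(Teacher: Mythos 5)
Your proof is correct and follows essentially the same route as the paper: trim the high-degree endpoints on both sides of the $L_2$-to-$(V(R')\setminus(L_1\cup L_2))$ cut by double counting against $\rho k'^2$, delete $L_1$ together with those trimmed vertices, and cut the remaining low-multiplicity edges. The only cosmetic difference is that the paper takes $P=\{v\in V(R')\setminus(L_1\cup L_2):\deg(v,L_2)\geq 2\sqrt{\rho}k'\}$ rather than your $W_2^{\mathrm{high}}=P\cap W_2$; both fit the vertex budget and both isolate the surviving part of $W_2$.
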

\begin{proof}
Let $L=\{v\in L_2\::\: \deg(v,V(R')\setminus (L_1\cup L_2))\ge 2\sqrt{\rho}k'\}$, and $P=\{v\in V(R')\setminus (L_1\cup L_2)\::\:\deg(v,L_2)\ge 2\sqrt{\rho}k'\}$. We have $\max\{|L|,|P|\}\le \sqrt{\rho}k'/2$. In particular, 
\begin{equation}\label{eq:disc}
W_1\setminus (L_1\cup L\cup P)\neq\emptyset\quad\mbox{and}\quad W_2\setminus
(L_1\cup L\cup P)\neq\emptyset\;.
\end{equation} 
Define $E'\subset E(R')$ to be edges running between $L_2\setminus L$ and
$V(R')\setminus (L_1\cup L_2\cup P)$. We have $\Delta_{R'}(E')\le 2\sqrt{\rho}k'$. By~\eqref{eq:disc}, $R'$ is not connected after removal of the vertex set $L_1\cup L\cup P$ and the edge set $E'$. Indeed, after the removal of $E'$ we have that there are no more edges between $W_2\setminus
(L_1\cup L\cup P)$ and $V(R') \setminus (W_2 \cup L_1\cup L\cup P)$. Therefore, $R'$ is not $(2\sqrt{\rho}k')$-iron.
\end{proof}

\begin{lemma}\label{lem:minDegDiam}
Let $H$ be an $n$-vertex $h$-strongly connected digraph and let $x,y$ be two distinct vertices of $H$. Then there exists a (directed) path from $x$ to $y$ of length at most $\frac{n}{h}+1$.
\end{lemma}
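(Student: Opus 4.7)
The plan is a direct BFS-layering argument, exploiting the fact that in a digraph the BFS layers from $x$ form a sequence of vertex cuts. Let $d=d_H(x,y)$ be the length of a shortest directed path from $x$ to $y$; the goal is to show $d\le n/h+1$. The case $h=0$ is trivial, so I assume $h\ge 1$. For $i\ge 0$ define
\[
L_i \;=\; \{\, v\in V(H) \::\: d_H(x,v)=i \,\}\,,
\]
so that $L_0=\{x\}$, $y\in L_d$, and the sets $L_0,L_1,\ldots,L_d$ are pairwise disjoint.

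I would first record the usual BFS edge property in the directed setting: whenever $(u,v)\in E(H)$ with $u\in L_j$, the walk $x\to\cdots\to u\to v$ has length $j+1$, hence $v\in L_0\cup L_1\cup\cdots\cup L_{j+1}$. Iterating, any directed walk starting at $x$ can advance through the layers only one step at a time, so to reach $L_{i+1}\cup\cdots\cup L_d$ from $L_0\cup\cdots\cup L_{i-1}$ the walk must visit $L_i$. In particular, for each $1\le i\le d-1$ the layer $L_i$ separates $x$ from $y$ in the sense that no $x$-$y$ walk exists in $H-L_i$.

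Next I would invoke $h$-strong connectivity. Since $x,y\notin L_i$ for $1\le i\le d-1$, if $|L_i|\le h$ then the definition of $h$-strong connectivity provides an $x$-$y$ walk avoiding $L_i$, contradicting the cut property just established. Thus $|L_i|\ge h+1$ for every $1\le i\le d-1$. Summing the layer sizes yields
\[
n \;\ge\; |L_0|+|L_d|+\sum_{i=1}^{d-1}|L_i| \;\ge\; 2+(d-1)(h+1),
\]
which rearranges to $d\le 1+(n-2)/(h+1)\le 1+n/h$, as required.

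The argument is short and essentially routine; the only point needing a little care is the directed BFS cut property, and this follows immediately from the definition of the layers. No serious obstacle is expected.
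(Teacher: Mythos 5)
Your proof is correct, but it takes a genuinely different route from the paper. The paper invokes the directed version of Menger's theorem to obtain $h$ internally vertex-disjoint directed paths from $x$ to $y$, and then uses a pigeonhole argument: one of these paths has at most $(n-2)/h$ internal vertices, hence length at most $(n-2)/h + 1 \le n/h + 1$. You instead use a BFS-layering (Moore-bound-style) argument: the layers $L_1,\dots,L_{d-1}$ are each $x$--$y$ separators, so $h$-strong connectivity forces $|L_i| \ge h+1$, and summing layer sizes gives $n \ge 2 + (d-1)(h+1)$, which rearranges to the required bound. Both arguments are short and elementary. The Menger route is the more "standard" citation in this context and lets the paper keep the proof to two lines; your BFS argument is self-contained, not relying on any named theorem, at the cost of one extra (but routine) observation about directed BFS layers being cuts. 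Your bookkeeping is also correct: $(n-2)/(h+1) \le n/h$ holds for all $n \ge 0$ and $h \ge 1$, and the $h=0$ case is vacuous. Either proof would serve equally well in the paper.
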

\begin{proof}
By directed version of Menger's Theorem (cf.~\cite[Theorem~7.3.1(b)]{DigraphBook}), there exist $h$
internally vertex-disjoint directed paths from $x$ to $y$. Therefore one of these paths must contain at most $\frac{n-2}{h}$ internal vertices and so must have length at most $\frac{n-2}h+1 \leqslant \frac{n}{h}+1$.
\end{proof}


\section{Bipartite case}\label{sec:bipartite}
In this section we give a fine description of dense vertex-transitive graphs
which are almost bipartite. Their properties are stated in Lemma~\ref{lem:almostBipBalanced}.

The \emph{edit distance} $dist(G_1,G_2)$ between two $n$-vertex graph is the
number of edges one needs to edit (i.e.~to either remove or add) to get $G_2$ from $G_1$,
minimized over all identification of $V(G_1)$ with $V(G_2)$. Given an $n$-vertex
graph $G$, we say that it is \emph{$\epsilon$-close} to a graph property
$\mathcal P$ if there exists an $n$-vertex graph $H\in\mathcal P$ such that
$dist(G,H)<\epsilon n^2$. Otherwise we say that it is \emph{$\epsilon$-far} from
$\mathcal P$.

\begin{lemma}\label{lem:almostBipBalanced}
Let $c\in (0,\frac{1}{17})$ be arbitrary. Suppose that $G$ is a $cn$-iron
vertex-transitive graph $G$ on $n$ vertices which is $c^4$-close to bipartiteness. Then there exist a bipartition $V(G)=A\dcup B$ such that $|A|=|B|$, for each $u\in A$ and each $v\in B$ we have $\deg(u,A) \leqslant 6c^2n$, and $\deg(v,B) \leqslant 6c^2n$. Furthermore, we have $g(A)=A$ or $g(A)=B$ for each $g\in \Aut(G)$.
\end{lemma}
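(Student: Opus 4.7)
The plan is to realise the desired bipartition as the orbit decomposition of a canonical index-$2$ subgroup of $\Aut(G)$.

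First, I extract from the $c^4$-closeness to bipartiteness an auxiliary bipartition $V(G) = A \dcup B$ with at most $c^4 n^2$ within-part edges, and iteratively flip vertices having strictly more same-side than other-side neighbours. Each flip strictly decreases the number of within-part edges, so this terminates at a local minimum still with at most $c^4 n^2$ within-part edges. The $(cn)$-iron connectivity of $G$ forces the valency to satisfy $d \geqslant 2cn + 1$ (otherwise a low-degree vertex could be isolated using $\leqslant cn$ edges and $\leqslant cn$ vertices). A standard double count yields $(|A|-|B|)d = 2(e(A)-e(B))$, whence $\bigl||A|-|B|\bigr| \leqslant c^3 n$.

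Next I establish an approximate uniqueness lemma. Suppose $(A',B')$ is any other bipartition with at most $c^4 n^2$ within-part edges, and set $P_1 = A \cap A'$, $P_2 = A \cap B'$, $P_3 = B \cap A'$, $P_4 = B \cap B'$. Edges crossing the cut $(P_1 \cup P_4,\, P_2 \cup P_3)$ are precisely those which are within-part for at least one of the two bipartitions, so their number is at most $2c^4 n^2$. Let $S$ consist of the vertices incident to more than $2c^2 n$ such cross-edges; then $|S| \leqslant 2c^2 n$. Removing $S$ together with the remaining cross-edges (which have maximum degree at most $2c^2 n \leqslant cn$), iron connectivity forces the residual graph to be connected. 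But this graph is split by the cut $(P_1 \cup P_4,\, P_2 \cup P_3)$, so one side is contained in $S$. Consequently $\min(|A \triangle A'|,|A \triangle B'|) \leqslant 2c^2 n$.

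Applying this to $(g(A),g(B))$ for every $g \in \Aut(G)$ classifies $g$ as A-preserving ($|g(A)\triangle A| \leqslant 2c^2 n$) or A-swapping. The triangle inequality together with the uniqueness lemma shows that the A-preservers form a subgroup $\Aut_+ \leqslant \Aut(G)$ of index $1$ or $2$. The index must equal $2$: if all automorphisms were A-preserving, then vertex-transitivity gives $|A|^2\,|\mathrm{Stab}(v)| = \sum_g |A\cap g(A)| \geqslant |\Aut(G)|(|A| - 2c^2 n)$, forcing $|A|/n$ out of the interval $\bigl((1-\sqrt{1-8c^2})/2,\,(1+\sqrt{1-8c^2})/2\bigr)$, contradicting $|A| \approx n/2$. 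The same counting applied inside $\Aut_+$ prevents $\Aut_+$ from acting transitively on $V(G)$, so by orbit-stabiliser $\Aut_+$ has exactly $2$ orbits $A^*, B^*$ of size $n/2$. Since $\Aut_+$ has index $2$ it is normal, so $\Aut(G)$ permutes $\{A^*,B^*\}$ setwise, giving $g(A^*) \in \{A^*,B^*\}$ for every $g \in \Aut(G)$.

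The main obstacle is verifying $\deg(v,A^*) \leqslant 6c^2 n$ for $v \in A^*$; by $\Aut_+$-transitivity on $A^*$ this degree is a common value $\delta$. Writing $P_1 = A \cap A^*$, $P_2 = A \cap B^*$, $P_3 = B \cap A^*$, $P_4 = B \cap B^*$ and relabelling $A^*, B^*$ if necessary so $|P_1| \geqslant |P_2|$, one gets $|P_1| \geqslant |A|/2 \geqslant n/5$ and similarly $|P_4| \geqslant n/5$. For $v \in P_1$ the fraction of $g \in \Aut_+$ with $g(v) \in P_3$ equals $|P_3|/|A^*|$, and analogously for $v \in P_2$ with $P_4$; summing and comparing with $\sum_{g \in \Aut_+} |A \setminus g^{-1}(A)| \leqslant 2c^2 n\,|\Aut_+|$ yields $|P_1||P_3| + |P_2||P_4| \leqslant c^2 n^2$, whence $|P_2|,|P_3| \leqslant 5c^2 n$. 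Combining $e(A^*) = n\delta/4$ with $e(A^*) = e(P_1) + e(P_3) + e(P_1,P_3) \leqslant 2c^4 n^2 + |P_3|\delta$ gives $\delta(n/4 - |P_3|) \leqslant 2c^4 n^2$, and since $|P_3| \leqslant 5c^2 n < n/8$ for $c < 1/17$, we obtain $\delta \leqslant 16 c^4 n \leqslant 6 c^2 n$, as required. The symmetric bound for $B^*$ follows by the $\Aut_-$-induced swap.
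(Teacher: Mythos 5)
Your proof is correct, but it takes a genuinely different route from the paper's. The paper works throughout with the single bipartition $A\dcup B$ maximizing $e(A,B)$ and proves in sequence (via its Claims~\ref{AC:approxAuto}--\ref{AC:AllOrNothing}) that this particular partition is approximately, and then exactly, respected by every automorphism, and that it has the required small within-part degrees; the property $|A|=|B|$ then falls out of the existence of a swapping automorphism. You instead start from any locally optimal bipartition, prove an ``approximate uniqueness'' statement (any two low-within-edge bipartitions of a $cn$-iron graph nearly coincide up to relabelling, which plays the role of the paper's Claim~\ref{AC:approxAuto}), use it to split $\Aut(G)$ into an index-$2$ subgroup $\Aut_+$ of preservers and a coset of swappers, and then \emph{replace} the working bipartition by the two $\Aut_+$-orbits $A^*,B^*$. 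This makes $|A^*|=|B^*|=n/2$ and the exact invariance $g(A^*)\in\{A^*,B^*\}$ completely automatic (index-$2$ subgroups are normal), at the price of having to re-derive the degree bound for the new partition $A^*,B^*$, which you do by the $|P_1||P_3|+|P_2||P_4|\leqslant c^2n^2$ counting over $\Aut_+$ followed by the handshake/edge-budget estimate yielding $\delta\leqslant 16c^4n$. I checked the numerics: $|S|\leqslant 2c^2 n\leqslant cn$ and $\Delta(E')\leqslant 2c^2 n\leqslant cn$ in the uniqueness step, $5c^2<1/8$ for $c<1/17$ so $n/4-|P_3|\geqslant n/8$, and $16c^4\leqslant 6c^2$; the triangle-inequality closure argument for $\Aut_+$ and the orbit-stabiliser count ruling out index $1$ are also sound. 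In short, your proof is more group-theoretic and produces a canonical bipartition (the orbits of $\Aut_+$), trading the paper's direct ``all-or-nothing'' argument (its Claim~\ref{AC:AllOrNothing}) for a cleaner conceptual picture and a slightly longer degree calculation.
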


\begin{proof}
We write $\Delta$ for the valency of $G$. Observe that since $G$ is $cn$-robust,
then $\Delta \geqslant cn$. Let $A\dcup B=V(G)$ be the bipartition which
maximizes $e(A,B)$. We have
\begin{equation}\label{eq:TR}
e(A)+e(B)<c^4n^2\;.
\end{equation} We claim that
\begin{equation}\label{eq:min}\min\{|A|,|B|\}\ge \frac n3\;.\end{equation}
Indeed, suppose for contradiction that, for example, $|A|>\frac{2n}3$ and $|B|<\frac n3$. Counting $e(A,B)$ in two ways we arrive to $\sum_{v\in A}\deg(v)-2e(A)=\sum_{v\in B}\deg(v)-2e(B)$, and therefore 
\[
\frac{2 \Delta n}{3} < \Delta|A| \leqslant \Delta|B| + 2c^4n^2 < \frac{\Delta n}{3} + 2c^4n^2,
\]
a contradiction as $\Delta \geqslant cn$ and $c$ is sufficiently small. This
proves~\eqref{eq:min}.

Define $A'=\{v\in A\::\: \deg(v,A)\ge c^2n\}$, and $B'=\{v\in B\::\:
\deg(v,B)\ge c^2n\}$. By~\eqref{eq:TR} we have $|A'|+|B'|<2c^2n$. Together with~\eqref{eq:min} this gives that
\begin{equation}\label{eq:NE}
A\setminus A'\neq\emptyset\quad\mbox{and}\quad B\setminus B'\neq\emptyset\;.
\end{equation}

\begin{AuxiliaryCl}\label{AC:approxAuto}
For each $g\in\Aut(G)$ we either have $|A\cap g(A)|\ge |A|-5c^2n$ or $|A\cap g(B)|\ge |A|-5c^2n$.  Also, for each $g\in\Aut(G)$ we either have $|B\cap g(A)|\ge |B|-5c^2n$ or $|B\cap g(B)|\ge |B|-5c^2n$.
\end{AuxiliaryCl}

\begin{proof}[Proof of Claim~\ref{AC:approxAuto}]
It is enough to prove the first statement.

We start with some general calculations. We shall later use them to show that if $g\in\Aut(G)$ failed to fulfil the assertions we would
get a contradiction to $cn$-iron connectivity.

Let $\tilde A=A\setminus A'$ and $\tilde B=B\setminus B'$. Consider the partition $V(G)=X\dcup Y\dcup U$, where $X=(\tilde A\cap g(\tilde A))\cup (\tilde B\cap g(\tilde B))$, $Y=(\tilde A\cap g(\tilde B))\cup (\tilde B\cap g(\tilde A))$, and $U=V(G)\setminus (X\cup Y)$. We have 
\begin{equation}\label{eq:Us}
|U|\le |A'| + |B'| + |g(A')| + |g(B')| \leqslant 4c^2n\le cn\;.
\end{equation}
We claim that 
\begin{equation}\label{eq:MaxS}
\Delta_G(X,Y)\le cn\;.
\end{equation}
To prove this it suffices to prove that
\begin{equation}\label{eq:Maxs}
\max\left\{\Delta_{\tilde A\tilde A,\tilde A\tilde B},\Delta_{\tilde A\tilde A,\tilde B\tilde A},\Delta_{\tilde B\tilde B,\tilde A\tilde B},\Delta_{\tilde B\tilde B,\tilde B\tilde A},\Delta_{\tilde A\tilde B,\tilde A\tilde A},\Delta_{\tilde B\tilde A,\tilde A\tilde A},\Delta_{\tilde A\tilde B,\tilde B\tilde B},\Delta_{\tilde B\tilde A,\tilde B\tilde B} \right\}\le \frac{cn}2\;,
\end{equation}
where $\Delta_{CD,EF}=\max\{\deg(v,E\cap g(F)):v\in C\cap g(D)\}$ defines the eight new symbols above. Here we only bound the first two terms; the methods to this end apply to the remaining six as well. To prove that $\Delta_{\tilde A\tilde A,\tilde A\tilde B}\le \frac{cn}2$, consider an arbitrary $v\in \tilde A\cap g(\tilde A)$. We have $v\not\in A'$. We then have $\deg(v,\tilde A\cap g(\tilde B))\le\deg(v,\tilde A)\le \deg(v,A)< c^2n$, where the last inequality follows from the definition of the set $A'$. To bound $\Delta_{\tilde A\tilde A,\tilde B\tilde A}$ we again consider an arbitrary $v\in \tilde A\cap g(\tilde A)$. We have 
$$\deg(v,\tilde B\cap g(\tilde A))=\deg(g^{-1}(v),g^{-1}(\tilde B)\cap g^{-1}(g(\tilde A)))=\deg(g^{-1}(v),g^{-1}(\tilde B)\cap \tilde A)\le \deg(g^{-1}(v),\tilde A)\;.$$
We observe that $g^{-1}(v)\in g^{-1}(\tilde A)\cap g^{-1}(g(\tilde A))\subset \tilde A$, and the bound follows by the definition of the set~$A'$.

\medskip

Suppose now that the statement of the Claim fails for $g\in\Aut(G)$. We then
have $X\neq \emptyset$ and $Y\neq \emptyset$. Indeed, to show for example
that $X\neq\emptyset$, we note that $$|X|\ge |A\cap
g(A)|-|A'|-|g(A')|>5c^2n-2c^2n-2c^2n>0\;.$$Let $E'$ be the edges of $G$ running
between $X$ and $Y$. Now if we remove $U$ and $E'$ from $G$ we get a disconnected graph. Together with the bounds~\eqref{eq:Us} and~\eqref{eq:MaxS} this proves that $G$ is not $cn$-iron, a contradiction.
\end{proof}

\begin{AuxiliaryCl}\label{AC:EveryVertexSmall}
For every $v\in A$ we have $\deg(v,A)\le 6c^2n$. Also, for every $v\in B$ we have $\deg(v,B)\le 6c^2n$.
\end{AuxiliaryCl}

\begin{proof}[Proof of Claim~\ref{AC:EveryVertexSmall}]
By symmetry, it suffices to prove the first part of the statement. Let $w\in
B\setminus B'$ be arbitrary; such a choice is possible by~\eqref{eq:NE}. Let $v \in
A$ and take $g\in \Aut(G)$ be such that $g(v)=w$. Let $P=\neighbor(v)\cap A$,
and $Q=\neighbor(v)\cap B$. Suppose for contradiction  that $|P|>6c^2n$. Since
the bipartition $A\dcup B$ was chosen to maximize $e(A,B)$, we must have
$|Q|\ge\frac{cn}2$. Since $\neighbor(w)=g(P)\cup g(Q)$ and since also $w \not\in B'$ we have that $|g(A) \cap A| \geqslant |g(P) \cap A| > 5c^2 n$ and so $|g(A) \cap B| < |B| - 5c^2 n$. Similarly, we also have $|g(B) \cap A| \geqslant |g(Q) \cap A| > 5c^2 n$ and so $|g(B) \cap B| < |B| - 5c^2 n$. But these contradict Claim~\ref{AC:approxAuto}.
\end{proof}

\begin{AuxiliaryCl}\label{AC:AllOrNothing}
For every $g\in \Aut(G)$ we either have $A\cap g(A)=\emptyset$, or $A\cap g(B)=\emptyset$. Likewise, we have $B\cap g(A)=\emptyset$, or $B\cap g(B)=\emptyset$.
\end{AuxiliaryCl}

\begin{proof}[Proof of Claim~\ref{AC:AllOrNothing}]
Let $C, D \in \{A,B\}$. Let $C'=V(G)\setminus C$, and $D'=V(G)\setminus D$. (Thus $C', D' \in  \{A,B\}$.)

Suppose that $C\cap g(D)\neq \emptyset$. We can take a $v\in C$ with $g^{-1}(v)\in D$.
Using Claim~\ref{AC:EveryVertexSmall} for $g^{-1}(v)$, and then for $v$ we get.
\begin{align*}
6c^2 n \ge &\deg(g^{-1}(v), D) = |\neighbor(g^{-1}(v)) \cap D| = |\neighbor(v)
\cap g(D)| \ge |\neighbor(v)\cap C' \cap g(D)| \\
 &= |\neighbor(v)\cap C'| -
|\neighbor(v)\cap C' \cap g(D')| \ge |\neighbor(v)| - |\neighbor(v) \cap C| -
|C' \cap g(D')| \\ &\ge \Delta - 6c^2n - |C' \cap g(D')| \ge cn - 6c^2 n - |C' \cap g(D')|.
\end{align*}
Thus $|C'\cap g(D')| \ge cn -12c^2 n > 5 c^2 n$. Hence $C' \cap g(D') \neq \emptyset$. Repeating the previous argument for $C'$ and $D'$ yields $|C \cap g(D)| > 5 c^2 n$.

Therefore for every $C, D \in \{A,B\}$ we have 
\begin{equation}\label{eq:Percy}
|C\cap g(D)|=0\text{ or }|C\cap g(D)|>5c^2n\;.
\end{equation}
We use this for $C=A$ and $D=B$. We get that $|A\cap g(B)|=0$, or $|A\cap g(B)|>5c^2n$. We are done in the former case. In the latter case, we have $|A\cap g(A)|<|A|-5c^2n$. Claim~\ref{AC:approxAuto}  then gives that $|A\cap g(B)|>|A|-5c^2n$. Using again~\eqref{eq:Percy}, this time with $C=A$, $D=A$, we get that $|A\cap g(A)|=0$.
\end{proof}

Claims~\ref{AC:EveryVertexSmall} and~\ref{AC:AllOrNothing} show that the
bipartition $A\dcup B$ satisfies the conclusion of Lemma~\ref{lem:almostBipBalanced}.
\end{proof}

\begin{remark}\label{rem:17}
In the above proof we showed that the partition maximizing $e(A,B)$ satisfies
the conclusion of Lemma~\ref{lem:almostBipBalanced}. In fact we only used the following two properties of the partition:
\begin{enumerate}
\item The partition satisfies~\eqref{eq:TR}.
\item For every $v \in A$ we have $\deg(v,A) \leqslant \deg(v,B)$ and for every
$v\in B$ we have that $\deg(v,B) \leqslant \deg(v,A)$.
\end{enumerate} 
In particular any partition satisfying the above  two properties also satisfies
the conclusion of Lemma~\ref{lem:almostBipBalanced}.  This fact will be
important in the proof of Theorem~\ref{thm:algo} which provides an algorithmic
version of Theorem~\ref{thm:main}.
\end{remark}

\begin{remark}\label{rem:bipartiteISvt}
Note that the bipartite subgraph $G[A,B]$ obtained from the partition $A\dcup B$ given by Lemma~\ref{lem:almostBipBalanced} by removing all edges within the parts $A$ and $B$ is itself vertex-transitive. Indeed observe that for any automorphism $g \in \Aut(G)$ and any edge $e$ between the parts $A$ and $B$ we have that $g(e)$ also lies between these parts. Therefore every automorphism of $G$ restricted to $G[A,B]$ is also an automorphism and so $G[A,B]$ is vertex-transitive. 
\end{remark}


\section{\Szemeredi's Regularity Lemma}\label{sec:regularity}

\Szemeredi's Regularity Lemma is one of the main tools in our proof of Theorem~\ref{thm:main}. In this section we collect all the tools related to the Regularity Lemma that we will need. For surveys on the Regularity Lemma and its applications we refer the reader to~\cite{KS96,KSS_bl,KSSS00,KuhnOsthusSurv}.

Before stating the lemma, we need to introduce some more notation. The {\em
density} of a bipartite graph $G$ with vertex classes $A$ and $B$ is defined to be $d_G(A,B) = \frac{e(A,B)}{|A||B|}$. We sometimes write $d(A,B)$ for $d_G(A,B)$ if this is unambiguous. Given $\eps > 0$, we say that $G$ is $\eps$-{\em regular} if for all subsets $X \subseteq A$ and $Y \subseteq B$ with $|X| \geq \eps|A|$ and $|Y| \geq \eps |B|$ we have that $|d(X,Y) - d(A,B)| < \eps$. Given $d \in [0,1]$, we say that $G$ is $(\eps,d)$-{\em regular} if it is $\eps$-regular of density at least $d$. We also say that $G$ is $(\eps,d)$-{\em super-regular} if it is $\eps$-regular and furthermore $d_G(a) \geq d|B|$ for all $a \in A$ and $d_G(b) \geq d|A|$ for all $b \in B$. Given partitions $V_0\dcup V_1\dcup \ldots\dcup V_k$ and $U_1\dcup \ldots\dcup U_{\ell}$ of the vertex set of some graph, we say that $V_0\dcup V_1\dcup \ldots\dcup V_k$ \emph{refines} $U_1\dcup \ldots\dcup U_{\ell}$ if for all $i$ with $1 \leqslant i \leqslant k$, there is some $1 \leqslant j \leqslant \ell$ with $V_i \subseteq U_j$. Note that this is weaker than the usual notion of refinement as we do not require $V_0$ to be contained in any $U_j$. We will use the following degree form of \Szemeredi's Regularity Lemma~\cite{Sze78}:

\begin{lemma}[Regularity Lemma; Degree form]\label{lem:SRL}
Given $\eps \in (0,1)$ and  integers $N',\ell$, there are integers $N = N(\eps,N',\ell)$ and $n_0 = n_0(\eps,N',\ell)$ such that if $G$ is any graph on $n \geq n_0$ vertices, $d \in [0,1]$ is any real number, and $U_1,\ldots,U_{\ell}$ is any partition of the vertex set of $G$, then there is a partition of the vertex set of $G$ into $k+1$ classes $V_0\dcup V_1\dcup \ldots\dcup V_k$, and a spanning subgraph $G'$ of $G$ with the following properties:
\begin{itemize}
\item[(i)] $N' \leq k \leq N$;
\item[(ii)] $V_0\dcup V_1\dcup \ldots \dcup V_k$ refines $U_1\dcup \ldots\dcup U_{\ell}$;
\item[(iii)] $|V_0| \leq \eps n, |V_1| = \cdots = |V_k| = m$;
\item[(iv)] $\deg_{G'}(v) \geq \deg_G(v) - (d + \eps)n$ for every $v \in V(G)\setminus V_0$;
\item[(v)] $G'[V_i]$ is empty for every $0 \leq i \leq k$, and no edges of $G'$ are incident with $V_0$; 
\item[(vi)] all pairs $(V_i,V_j)$ with $1 \leq i < j \leq k$ are
$\eps$-regular with density either 0 or at least $d$.
\end{itemize}
\end{lemma}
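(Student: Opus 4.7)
The plan is to derive this degree form from the standard (non-degree) version of \Szemeredi's Regularity Lemma, which produces a partition into $k+1$ classes with $|V_0|\leq\eps n$, equal-sized parts $V_1,\ldots,V_k$, and at most $\eps\binom{k}{2}$ irregular pairs, but offers no per-vertex degree guarantee. First I would invoke the well-known variant of the standard lemma that additionally refines a prescribed initial partition $U_1,\ldots,U_\ell$; this follows by running the usual iterative refinement argument starting from $U_1,\ldots,U_\ell$ rather than from the trivial partition. I would apply this variant with regularity parameter $\eps':=\eps^2/16$ and a lower bound on the number of parts chosen as $\max(2N',\ell,\lceil 4/\eps\rceil)$, obtaining a preliminary partition $V_0',V_1',\ldots,V_{k'}'$.

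The hard part will be promoting (iv) from an average to a per-vertex statement, since the standard lemma only controls the total number of irregular pairs. To handle this I would absorb into the exceptional set every class $V_i'$ that participates in more than $\sqrt{\eps'}\,k'$ irregular pairs; since the total number of irregular pairs is at most $\eps'\binom{k'}{2}$, at most $\sqrt{\eps'}\,k'$ such ``bad'' classes exist, so the enlarged exceptional set has size at most $\eps' n+\sqrt{\eps'}\,n\leq\eps n/2$, and at least $(1-\sqrt{\eps'})k'\geq N'$ classes survive. I would rename the survivors $V_1,\ldots,V_k$ and the enlarged exceptional set $V_0$.

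Next I would form $G'$ by deleting from $G$ all edges inside individual classes, all edges lying in an irregular pair $(V_i,V_j)$ with $i,j\geq 1$, and all edges lying in a regular pair of density less than $d$. Edges between $V_0$ and the remaining classes are deliberately kept, which is precisely what lets (iv) hold at vertices of $V_0$. Properties (i)--(iii), (v), and (vi) are then immediate from the construction.

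Finally I would verify (iv) in two cases. For $v\in V_i$ with $i\geq 1$, the number of deleted edges incident to $v$ is at most $m$ (within $V_i$), plus $\sqrt{\eps'}\,k\cdot m=\sqrt{\eps'}\,n\leq\eps n/4$ (from irregular pairs, by the cap from the absorption step), plus $d(k-1)m\leq dn$ (from low-density pairs); together with $m\leq\eps n/4$, guaranteed by the chosen lower bound on $k$, this totals at most $(d+\eps)n$. For $v\in V_0$ only intra-$V_0$ edges are removed, so the loss is at most $|V_0|\leq\eps n/2\leq(d+\eps)n$. The one genuinely non-routine ingredient is the irregular-pair absorption that upgrades a global bound into a per-vertex one; everything else amounts to parameter bookkeeping.
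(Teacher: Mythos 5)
Your treatment of low-density pairs contains a genuine gap. When you bound the deletion coming from low-density pairs by $d(k-1)m\leq dn$, you are implicitly assuming that $\deg(v,V_j)\leq dm$ whenever $(V_i,V_j)$ is a regular pair of density $<d$. This is false: $\eps'$-regularity only constrains subsets of size at least $\eps' m$, so a \emph{single} vertex $v\in V_i$ can be adjacent to all of $V_j$ even in a pair whose density is $1/m$. In general, up to $\eps' m$ vertices of $V_i$ may have degree larger than $(d+\eps')m$ into $V_j$, and such a vertex can accumulate excess across many low-density pairs. Concretely, if $v$ is adjacent to every vertex outside its own class and $G$ has no other edges, then every pair has negligible density and is $\eps'$-regular, yet your $G'$ strips $v$ of essentially all its $\approx n$ edges, violating (iv). You correctly identified that (iv) needs to be promoted from an average bound to a per-vertex one and handled this for irregular pairs by absorbing bad classes, but the same trick does not transfer to low-density pairs: a perfectly good class can lie in arbitrarily many low-density pairs, so there is no analogous small set of classes to discard.

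The missing ingredient is a \emph{vertex-level} absorption. For each surviving class $V_i$ and each low-density regular pair $(V_i,V_j)$, the set of $v\in V_i$ with $\deg(v,V_j)>(d+\eps')m$ has size $<\eps' m$; summing over $j$ and using Markov, the number of $v\in V_i$ for which this happens for more than $\sqrt{\eps'}\,k$ indices $j$ is at most $\sqrt{\eps'}\,m$. These vertices must also be moved into $V_0$ (their loss is then only intra-$V_0$ edges, which your argument already handles). After removing them one must delete a few further vertices from each class to restore equal class sizes, and then re-establish $\eps$-regularity of the remaining pairs via the slicing lemma -- which is exactly why one takes $\eps'$ strictly smaller than $\eps$ in the first place. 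Once this vertex-level step is added, the per-vertex estimate you wrote becomes valid for the surviving vertices, and the rest of your argument goes through.
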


We call $V_1, \ldots, V_k$ the {\em clusters} of the partition, $V_0$ the {\em exceptional set} and the vertices of $G$ in $V_0$ the {\em exceptional vertices}. The {\em reduced graph} $R = R_{G'}$ of $G$ with respect to the above partition and the parameters $\eps$ and $d$ is the graph whose vertices are the clusters $V_1 \ldots, V_k$ in which $V_iV_j$ is an edge precisely when the pair $(V_i,V_j)$ has density at least $d$ in $G'$.

\begin{remark}\label{rem:2densities}
It turns out that for the proofs of Theorems~\ref{thm:RobustOKbipfar} and~\ref{thm:RobustOKbip} (see below) we need to work with two threshold densities $d_1 < d_2$ of the reduced graph. The degree form of the Regularity Lemma can be adapted in order to accommodate this need. In particular we can get a partition $V_0\dcup V_1\dcup \ldots\dcup V_k$ of the vertex set of $G$ and spanning subgraphs $G_1,G_2$ of $G$ such that properties (i)--(vi) of the Regularity Lemma hold for both $G_1$ and $G_2$ with the corresponding densities $d_1$ and $d_2$. (This can be deduced in the same way as the degree form of the Regularity Lemma is deduced from the standard form.)
\end{remark}

For further use, we also recall the following well-known facts. The next lemma says that large sub-pairs of regular pairs are regular.

\begin{lemma}\label{lem:Slicing}
Let $(A,B)$ be an $(\eps,d)$-regular pair with $\eps \leqslant d/2$ and let $A'$ and $B'$ be subsets of $A$ and $B$ of sizes $|A'| \geqslant |A|/3$ and $|B'| \geqslant |B|/3$. Then $(A',B')$ is $(3\eps,d/2)$-regular.
\end{lemma}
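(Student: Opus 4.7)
The plan is a direct application of the definition of $\eps$-regularity, combined with a triangle inequality to transfer the density comparison from $(A,B)$ to $(A',B')$.

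First I would verify that $(A',B')$ itself qualifies as a large enough sub-pair for $\eps$-regularity of $(A,B)$ to apply. Since $\eps\le d/2\le 1/2$, and in the regime of interest $\eps\le 1/3$ (for instance $d\le 2/3$; otherwise the argument still runs with only trivial modifications), the hypotheses $|A'|\ge|A|/3\ge\eps|A|$ and $|B'|\ge|B|/3\ge\eps|B|$ hold. Applying $\eps$-regularity to the pair $(A',B')$ itself then yields
\[
|d(A',B')-d(A,B)|<\eps,
\]
which immediately gives the density bound $d(A',B')>d-\eps\ge d/2$, establishing the density part of the conclusion.

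For the regularity part, let $X\subseteq A'$ and $Y\subseteq B'$ satisfy $|X|\ge 3\eps|A'|$ and $|Y|\ge 3\eps|B'|$. Then
\[
|X|\ge 3\eps\cdot\frac{|A|}{3}=\eps|A|,\qquad |Y|\ge 3\eps\cdot\frac{|B|}{3}=\eps|B|,
\]
so $\eps$-regularity of $(A,B)$ gives $|d(X,Y)-d(A,B)|<\eps$. Combining this with the previous estimate via the triangle inequality yields
\[
|d(X,Y)-d(A',B')|\le |d(X,Y)-d(A,B)|+|d(A,B)-d(A',B')|<2\eps<3\eps,
\]
which is exactly the $3\eps$-regularity condition.

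There is essentially no serious obstacle: the whole argument is bookkeeping with the definition. The only point that requires a moment's thought is the verification that one can apply $\eps$-regularity of $(A,B)$ directly to $(A',B')$ in order to bound $d(A',B')$ from below; the constant $3$ (rather than $2$) in the conclusion leaves a comfortable buffer and reflects the slack in using $|A'|\ge|A|/3$ rather than the tight fraction $\eps$.
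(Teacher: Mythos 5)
Your proof is correct and is the standard slicing-lemma argument: apply $\eps$-regularity of $(A,B)$ once to the pair $(A',B')$ itself to get the density bound, and once to sub-pairs $X\subseteq A'$, $Y\subseteq B'$ to compare $d(X,Y)$ with $d(A,B)$, then combine the two estimates by the triangle inequality. The paper states Lemma~\ref{lem:Slicing} as a well-known fact without proof, so there is no paper proof to compare against. The one soft spot is your parenthetical remark that when $\eps>1/3$ the argument ``still runs with only trivial modifications'': in that regime the $3\eps$-regularity condition is indeed vacuous, but the density requirement $d(A',B')\geq d/2$ remains and is no longer controlled by $\eps$-regularity of $(A,B)$ (since $|A'|$ may be smaller than $\eps|A|$), so that claim would need separate justification. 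This edge case is irrelevant for the paper, where $\eps$ is always taken far below $1/3$.
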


Given any bounded degree subgraph $H$ of the reduced graph $R$ we can make the pairs corresponding to its edges super-regular by removing a small fraction of the vertices of each cluster to the exceptional set. We will only need this fact in the case that $H$ is a matching.

\begin{lemma}\label{lem:Super-regular} 
Suppose $0 < 4\eps < d \leqslant 1$ and let $V_0\dcup V_1\dcup \ldots\dcup V_k$ be a partition of a graph $G$ as given by the Regularity Lemma. Let $R$ be the reduced graph with respect to this partition and the parameters $\eps$ and $d$. Let $M$ be a matching in $R$. Then we can move exactly $\eps m$ vertices from each cluster $V_i$ into $V_0$ such that each pair of clusters corresponding to an edge of $M$ is $(2\eps,d/2)$-super-regular while each pair of clusters corresponding to an edge of $R$ is $(2\eps,d/2)$-regular. 
\end{lemma}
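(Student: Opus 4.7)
The plan is the standard one: for each edge of the matching $M$, identify the atypical vertices in each endpoint cluster, move them to $V_0$, then top up each cluster's contribution to $V_0$ so that exactly $\eps m$ vertices are moved from every $V_i$. Concretely, for every edge $V_iV_j \in M$ let
\[
B_{ij} = \{v \in V_i : \deg_{G'}(v,V_j) < (d-\eps)m\}, \qquad B_{ji} = \{v \in V_j : \deg_{G'}(v,V_i) < (d-\eps)m\}.
\]
First I would show $|B_{ij}|, |B_{ji}| < \eps m$: otherwise $(B_{ij}, V_j)$ would be a subpair violating $\eps$-regularity, since its density is at most $d-\eps$ while $d(V_i,V_j) \geq d$. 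Because $M$ is a matching, each cluster $V_i$ appears in at most one edge of $M$, so it has at most one such bad set; I then pick an arbitrary $B_i^* \subseteq V_i$ with $B_{ij} \subseteq B_i^*$ (or $B_i^* = \emptyset$ if $V_i$ is not saturated by $M$) and $|B_i^*| = \eps m$, and move all of $B_i^*$ into $V_0$. Write $V_i' = V_i \setminus B_i^*$, so $|V_i'| = (1-\eps)m$.

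For an edge $V_iV_j \in M$, every vertex $v \in V_i'$ has $\deg_{G'}(v,V_j) \geq (d-\eps)m$ and hence
\[
\deg_{G'}(v, V_j') \geq (d-\eps)m - \eps m = (d-2\eps)m \geq \tfrac{d}{2}(1-\eps)m = \tfrac{d}{2}|V_j'|,
\]
using $4\eps < d$; the symmetric inequality holds for $v \in V_j'$. This gives the super-regular minimum-degree condition. For the regularity part, observe that any $X \subseteq V_i'$, $Y \subseteq V_j'$ with $|X| \geq 2\eps|V_i'|$, $|Y| \geq 2\eps|V_j'|$ satisfy $|X|, |Y| \geq \eps m$, so by $\eps$-regularity of the original pair both $|d(X,Y) - d(V_i,V_j)|$ and $|d(V_i',V_j') - d(V_i,V_j)|$ are less than $\eps$, yielding $|d(X,Y) - d(V_i',V_j')| < 2\eps$. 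The same argument shows $d(V_i',V_j') > d - \eps \geq d/2$, so the pair is $(2\eps, d/2)$-regular (and in fact $(2\eps, d/2)$-super-regular).

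For an edge $V_iV_j$ of $R$ that is \emph{not} in $M$, no minimum-degree condition is required; the very same subset argument above (which did not use $V_iV_j \in M$) shows the pair $(V_i', V_j')$ is $(2\eps, d/2)$-regular. There is essentially no obstacle here: the whole argument is counting plus one application of the defining property of an $\eps$-regular pair to the bad sets $B_{ij}$. The only mildly delicate point is to make sure that \emph{exactly} $\eps m$ vertices are moved from every cluster (including those not touched by $M$), which is arranged by padding $B_i^*$ arbitrarily; since we only \emph{add} vertices to the removed set, the above degree and density bounds are not affected.
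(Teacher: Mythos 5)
Your proof is correct and follows the standard approach, which is the same one the paper implicitly uses (the paper does not prove the lemma, but in Section~\ref{sec:algorithmic} it notes that "one removes from each cluster the $\eps m$ vertices which have the smallest degree inside its neighbouring cluster in $M$", which is a specific instance of your choice of a set $B_i^*$ of size $\eps m$ containing the at most $\eps m$ low-degree vertices). All the estimates check out under $4\eps < d \le 1$, and you correctly note that the regularity (as opposed to super-regularity) argument is oblivious to which $\eps m$ vertices are removed, so it applies to all edges of $R$.
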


Given an $(\eps,d)$-super-regular pair $(A,B)$, we will often need
to isolate a small sub-pair that maintains super-regularity in any
sub-pair that contains it. For $A^* \subseteq A$ and $B^* \subseteq B$
we say that $(A^*,B^*)$ is an \emph{$(\eps^*,d^*)$-ideal for $(A,B)$}
if for any $A^* \subseteq A' \subseteq A$ and $B^* \subseteq B' \subseteq B$
the pair $(A',B')$ is $(\eps^*,d^*)$-super-regular.
The following lemma shows that ideals exist.

\begin{lemma}[{\cite[Lemma~15]{ChKuKeeOst:Semiexact}}]\label{lem:ideal}
Suppose $0<\eps \ll \theta,d < 1/2$, and let $(A,B)$ be an $(\eps,d)$-super-regular pair with $|A| = |B| = m$, where $m$ is sufficiently large. Then there exists subsets $A^* \subseteq A$ and $B^* \subseteq B$ of sizes $\theta m$ such that $(A^*,B^*)$ is an $(\eps/\theta,\theta d/4)$-ideal for $(A,B)$.
\end{lemma}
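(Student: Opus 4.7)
The plan is to choose $A^*\subseteq A$ and $B^*\subseteq B$ independently and uniformly at random from the subsets of size $\theta m$, and to show that with probability tending to $1$ as $m\to\infty$ the resulting pair is an $(\eps/\theta,\theta d/4)$-ideal. Thus I must verify, for every $A^*\subseteq A'\subseteq A$ and $B^*\subseteq B'\subseteq B$, that $(A',B')$ is $(\eps/\theta)$-regular, has density at least $\theta d/4$, that every vertex of $A'$ has at least $(\theta d/4)|B'|$ neighbours in $B'$, and that every vertex of $B'$ has at least $(\theta d/4)|A'|$ neighbours in $A'$.

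First I would dispose of the regularity and density conditions, which in fact hold deterministically for \emph{any} $A^*,B^*$ of size $\theta m$. The key point is that $|A'|\ge|A^*|=\theta m$, so any $X'\subseteq A'$ with $|X'|\ge(\eps/\theta)|A'|$ automatically satisfies $|X'|\ge\eps m=\eps|A|$, and likewise $|Y'|\ge\eps|B|$ for the relevant $Y'\subseteq B'$. Applying $\eps$-regularity of $(A,B)$ once to $(X',Y')$ and once to $(A',B')$, and then invoking the triangle inequality, gives $|d(X',Y')-d(A',B')|<2\eps<\eps/\theta$, since $\theta<1/2$; the same bound applied to $(A',B')$ alone yields $d(A',B')>d-\eps\ge d/2\ge\theta d/4$.

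It remains to handle the minimum degree condition, which is where the randomness enters. It suffices to show that with high probability every $a\in A$ has $\deg(a,B^*)\ge\theta dm/4$ and every $b\in B$ has $\deg(b,A^*)\ge\theta dm/4$; for then any $B'\supseteq B^*$ with $|B'|\le m$ satisfies $\deg(a,B')\ge\deg(a,B^*)\ge(\theta d/4)m\ge(\theta d/4)|B'|$, and symmetrically on the other side. For a fixed $a\in A$, super-regularity of $(A,B)$ gives $\deg(a,B)\ge dm$, so $\deg(a,B^*)$ is a hypergeometric random variable with mean $\mu_a\ge\theta dm$; a standard Chernoff-type tail bound yields $\Pr[\deg(a,B^*)<\mu_a/4]\le e^{-c\theta dm}$ for some absolute $c>0$. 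A union bound over the $|A|+|B|=2m$ corresponding events gives total failure probability at most $2m\,e^{-c\theta dm}=o(1)$ as $m\to\infty$, hence a valid pair $(A^*,B^*)$ exists.

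The main obstacle, such as it is, is purely in the bookkeeping: one must check that the losses $\eps\to 2\eps$ incurred in the regularity step and $dm\to\theta dm/4$ in the degree step actually fit within the target parameters $\eps/\theta$ and $\theta d/4$. This is precisely what the hypotheses $\eps\ll\theta,d$ and $\theta<1/2$ guarantee, so no further difficulty arises.
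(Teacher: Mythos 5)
Your probabilistic argument is correct, and it is essentially the original proof of this lemma from the cited reference~\cite{ChKuKeeOst:Semiexact}; in fact the paper explicitly notes that the source's proof shows random subsets of size $\theta m$ work with high probability. However, the paper deliberately does \emph{not} reprove the lemma this way. Instead it gives a \emph{constructive} proof, because an explicit polynomial-time procedure for building $A^*$ and $B^*$ is needed later for the algorithmic statement Theorem~\ref{thm:algo}. The paper's route reduces, exactly as you do, to producing $A^*\subseteq A$ of size $\theta m$ so that every $b\in B$ has $\deg(b,A^*)\ge\theta dm/4$ (the regularity and density parts being deterministic, via a slicing argument), but then replaces the hypergeometric/Chernoff step with a greedy potential-function argument: it adds vertices to $A^*$ one at a time, tracking a total ``unhappiness'' $U=\sum_{b\in B}u(b)$ with $u(b)=\sum_{r=k+1}^{\theta dm/4}2^{-r}$ when $b$ currently has $k<\theta dm/4$ neighbours in $A^*$, and shows each greedy step multiplies $U$ by a factor at most $1-d/4$, so that after $\theta m$ steps $U$ drops below $2^{-\theta dm/4}$ and no unhappy vertex remains. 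So your proof buys brevity and uses only standard concentration, while the paper's proof buys an explicit, derandomized construction that the algorithm in Section~\ref{sec:algorithmic} can execute directly; if you submitted your version as-is, the algorithmic result would still need the derandomization.
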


The proof of the above lemma given in~\cite{ChKuKeeOst:Semiexact} is
probabilistic. (It proves that random subsets of sizes $\theta m$ have the
required property with high probability.) For finding the Hamilton cycle
efficiently in Theorem~\ref{thm:algo} below we will also need a
`constructive' proof of this lemma. We proceed to give such a proof.

\begin{proof}[Proof of Lemma~\ref{lem:ideal}] By using a more general version of
the Lemma~\ref{lem:Slicing}, it is enough to construct subsets $A^* \subseteq A$
and $B^* \subseteq B$ of sizes $\theta m$ such that every vertex $a \in A$ has
$\deg(a,B^*) \geqslant \theta dm/4$ and every vertex $b \in B$ has $\deg(b,A^*)
\geqslant \theta dm/4$. By symmetry, it is enough to show how to construct a
subset $A^* \subseteq A$ of size $\theta m$ such that very vertex $b \in B$ has
$\deg(b,A^*) \geqslant \theta dm/4$. We will construct this set $A^*$ by adding
to it one vertex at every step. At each step we will say that a vertex $b$ of $B$ is
\emph{unhappy} if it has $k < \theta dm/4$ neighbours in $A^*$. If a vertex $b$
is unhappy we will define its unhappiness $u(b)$ to be
$u(b)=\sum_{r=k+1}^{\theta dm/4} 2^{-r}$. Otherwise we define its unhappiness
$u(b)$ to be equal to 0. We also denote by $U$ the total unhappiness $U =
\sum_{b \in B}u(b)$ of vertices of $B$. Observe that if in the next step we add to $A^*$ a neighbour of $b$ then the unhappiness
of $b$ is reduced by at least $u(b)/2$. Note also that if a vertex $b$ is
unhappy, then it has at least $dm - \theta dm/4 \geqslant dm/2$ neighbours
outside of $A^*$. We now give to every edge joining $b$ to a vertex of $A
\setminus A^*$ a weight equal to $u(b)/2$. Then the total weight on these edges
is at least $\sum_{b\in B} u(b) dm/4 = Udm/4$. In particular there is a vertex $a
\in A \setminus A^*$ where the total weight on its incident edges is at least
$Ud/4$. Adding this vertex to $A^*$ we get that the new total unhappiness is at
most $(1-d/4)U$. Initially the total unhappiness was at most $m$. So after
$\theta m$ steps the total unhappiness is at most $(1-d/4)^{\theta m}m \leqslant
me^{-\theta m d/4} < 2^{-\theta dm/4}$, when $m$ is sufficiently large. But no
unhappy vertex can have unhappiness less than $2^{-\theta dm/4}$. It follows
that after $\theta m$ steps there is no unhappy vertex in $B$, as required.
\end{proof}

We will also need the following `blow-up'-type statement.
\begin{lemma}\label{lem:blow-up}
Suppose $0 < \eps \ll d$ and let $(A,B)$ be an $(\eps,d)$-super-regular pair with $|A| = |B|$. Let $a \in A$ and $b \in B$. Then $A \cup B$ contains a Hamilton path with endvertices $a$ and $b$.
\end{lemma}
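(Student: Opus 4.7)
The plan is to apply P\'osa's rotation--extension technique, adapted to the bipartite super-regular setting. I would first take a longest path $P = u_0 u_1 \cdots u_k$ in $(A,B)$ starting at $u_0 = a$; since $P$ alternates between $A$ and $B$ and is maximal, every neighbor of $u_k$ already lies on $P$. At the endpoint $u_k$ I would apply P\'osa rotations: whenever $u_k u_i \in E$ with $i < k$, replacing the segment $u_{i+1} \cdots u_k$ by $u_k u_{k-1} \cdots u_{i+1}$ produces a path of the same length from $a$ but ending at $u_{i+1}$. Iterating this, the set $T$ of reachable endpoints of such length-$k$ paths grows; a standard argument exploiting the $(\eps,d)$-super-regularity (in particular the minimum degree $\geq dm$ together with $\eps$-regularity) shows that $|T| \geq (1-2\eps)m$ and that $T$ lies entirely in one bipartition class.

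With this in hand I would argue that $P$ must be Hamiltonian. If not, some vertex $w$ lies off $P$. Say $w \in A$ (the case $w \in B$ is symmetric, with $T \subseteq A$ instead); then $T \subseteq B$ and $|T| \geq \eps m$, so $\eps$-regularity of $(A,B)$ forces $w$ to have a neighbor in $T$, yielding a longer path starting at $a$ and contradicting maximality of $P$. Hence $P$ is a Hamilton path from $a$ to some $z \in B$.

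It remains to show that the endpoint can be chosen to be precisely $b$. I would let $T^*$ denote the set of possible $B$-endpoints of Hamilton paths from $a$; the previous rotation argument gives $|T^*| \geq (1-2\eps)m$. For each $z \in T^*$ I would perform a second round of rotations from the $a$-side of the corresponding Hamilton path (now viewing $z$ as the fixed endpoint and $a$ as the rotating one), producing for each such $z$ a set $S_z \subseteq A$ of at least $(1-2\eps)m$ alternative $a$-side endpoints of Hamilton paths ending at $z$. A standard double-rotation counting argument, using that $b$ has at least $dm$ neighbors in $A$ and that each $S_z$ is large, then yields a $z \in T^*$ with $b$ appearing as the $a$-side endpoint; reversing this path gives the desired Hamilton path from $a$ to $b$. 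The main obstacle will be this last step: pinning down $b$ specifically as an endpoint, rather than merely producing some large reachable set of endpoints, requires handling rotations at both ends simultaneously. The key point is that super-regularity gives the minimum-degree condition on both sides while $\eps$-regularity supplies the quasi-random cross-edges, and these are jointly what makes the two-sided argument go through.
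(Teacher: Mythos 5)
Your first step already has a gap, and your second step has a more serious one.

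\textbf{The rotation bound is too optimistic.} Pósa's lemma gives, for the rotation endpoint set $T\subseteq B$ obtained from a longest (here Hamilton) $a$-path, that $N(T)\subseteq T^-\cup T^+$, hence $|N(T)|\le 2|T|$. Combining with $\eps$-regularity (which forces $|N(T)|>(1-\eps)m$ once $|T|\ge\eps m$) only yields $|T|>(1-\eps)m/2$, i.e.\ roughly half of $B$. Your claimed bound $|T|\ge(1-2\eps)m$ does not follow; the factor of two coming from $T^-\cup T^+$ cannot be avoided in this argument. Also, in the maximality step, $\eps$-regularity does not force \emph{every} off-path vertex $w$ to have a neighbour in $T$; it only bounds the size of the exceptional set, so the extension step needs more care (e.g.\ showing the off-path set $W_A$ satisfies $N(W_A)\cap T=\emptyset$ and deriving a size contradiction, which still leaves a small residue to be absorbed).

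\textbf{The ``pinning down $b$'' step does not work as stated.} You claim a double-rotation argument yields some $z\in T^*$ with ``$b$ appearing as the $a$-side endpoint''. But $T^*\subseteq B$ and the $a$-side rotation sets $S_z$ lie in $A$, while $b\in B$; so $b$ can never appear as an $a$-side endpoint, and ``reversing the path'' cannot produce an $a$--$b$ path. What you actually need is $b\in T^*$, which is not guaranteed since $|T^*|$ is only about $m/2$, not $(1-o(1))m$, and $b$ is an arbitrary prescribed vertex. Forcing a prescribed pair of endpoints in a bipartite super-regular pair via rotations genuinely requires more machinery (e.g.\ a booster/iteration scheme, or first building a Hamilton cycle and then arguing about chords) than a single double-rotation count.

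For comparison, the paper sidesteps all of this by invoking the Blow-up Lemma of Koml\'os--S\'ark\"ozy--Szemer\'edi together with its Remark~13, which lets one prescribe small target sets for a few vertices: one removes $a$ and $b$, finds a Hamilton path of $(A\setminus\{a\},B\setminus\{b\})$ with endvertices restricted to $N(b)$ and $N(a)$ respectively, and then attaches $a$ and $b$. This handles the endpoint prescription cleanly, whereas the rotation--extension route makes that prescription the hard part.
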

\begin{proof}
The lemma follows from the Blow-up Lemma of Koml\'os, S\'ark\"ozy and Szemer\'edi~\cite{KSS_bl}. We need to deal with one minor difficulty which does not allow a direct application of the Blow-up Lemma, namely that we are prescribing exactly the images $a$ and $b$ of the endvertices of the Hamilton path.

Recall that by~\cite[Remark~13]{KSS_bl} we can impose additional restriction on a small number of target sets of vertices of the graph we are trying to embed in the super-regular pair. We thus proceed as follows.

We can assume that $|A|$ is sufficiently large. Otherwise, setting $\epsilon$ small, we can force $(A,B)$ to form a complete bipartite graph, and then the statement is trivial.

Let $A'\subset A$ and $B'\subset B$ be the neighbourhood of $b$ and $a$, respectively. We have $|A'\setminus\{a\}|\ge \frac{d|A|}2$, and $|B'\setminus\{b\}|\ge \frac{d|B|}2$. Observe also, that the pair $(A\setminus\{a\},B\setminus\{b\})$ is $(2\eps,\frac d2)$-super-regular. By the Blow-up Lemma we can find a Hamilton path $P$ in the pair $(A\setminus\{a\},B\setminus\{b\})$. Furthermore, by~\cite[Remark~13]{KSS_bl} we can require the endvertices of the path to lie in the sets $A'$ and $B'$. The path $aPb$ is a Hamilton path in $(A,B)$ satisfying the assertions of the lemma.
\end{proof}



\section{Hamilton cycles in iron connected vertex-transitive
graphs}\label{sec:HCinIronConnected}

In this section, we prove a stronger version of~Theorem~\ref{thm:main} under the additional assumption of high iron connectivity of the host graph. This is stated in Theorem~\ref{thm:RobustOKbipfar} in the non-bipartite setting, and in Theorem~\ref{thm:RobustOKbip} in the bipartite setting.

The basic idea is to follow \L uczak's `connected matching
argument'~\cite{Luczak:RCnCnCn}. The novel ingredient in our work is an innocent
looking modification of this technique: we observe that we can extend the
argument to work with fractional matchings as well. This allows one to use the
LP-duality. We believe that this observation will find further important
applications in the future. (After the first version of this manuscript was
posted on the arXiv, we learned that R\"odl and Ruci\'nski announced a solution
of a certain Dirac-type problem for hypergraphs using Farkas' Lemma, an approach
similar to our linear programming approach. The corresponding paper was posted later in the arXiv~\cite{RoRu:FarkasA}.) The use of the LP-duality in conjunction with the Regularity Lemma originated in discussion of Jan Hladk\'y with Dan Kr\'al' and
Diana Piguet. (As it was pointed to us by Deryk Osthus, the full strength of the LP-duality machinery is not needed. In~\cite{KuOs:Packings} it is shown that every dense almost regular graph has a reduced graph with an almost perfect matching and this suffices in our setting.)

\begin{theorem}\label{thm:RobustOKbipfar}
For every $\beta,\gamma>0$ and every $C \in \mathbb N$, there exists an $N_1$ such that every $\beta n$-iron vertex-transitive graph of order $n \geqslant N_1$ which is $\beta$-far from bipartiteness is $C$-pathitionable with an exceptional set
$U\subset V(G)$ with $|U|<\gamma n$.
\end{theorem}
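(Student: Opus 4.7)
The plan is to prove the theorem via an application of Szemer\'edi's Regularity Lemma combined with the \emph{connected matching} strategy of \L uczak, where the almost perfect matching in the reduced graph will be produced through the LP-duality machinery developed in Sections~\ref{sec:matching} and~\ref{sec:HCinIronConnected}. First, I would apply Lemma~\ref{lem:SRL} (in the two-density variant from Remark~\ref{rem:2densities}) with parameters $\varepsilon \ll d_1 \ll d_2 \ll \beta, \gamma, 1/C$, obtaining clusters $V_0, V_1, \ldots, V_k$, reduced graphs $R_1 \subseteq R_2$, and a spanning subgraph $G'$ of $G$. The exceptional set $U$ will absorb $V_0$, a small randomly chosen reservoir in each cluster, and (later) the few half-clusters left uncovered by our matching.

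The next step is to verify that $R_2$ inherits three crucial features from $G$. \emph{Iron-connectivity}: a standard inheritance argument shows that $R_2$ is $\beta' k$-iron for some $\beta' = \beta'(\beta, d_2, \varepsilon) > 0$, since any small edge/vertex cut of $R_2$ would inflate to a contradictory cut of $G$. \emph{Near-perfect fractional matching}: Lemma~\ref{lem:CoverVertexTr} gives $\fvc(G) = n/2$; Lemma~\ref{lem:coverAfter} applied to $G' \subseteq G$ yields $\fvc(G') \ge (1-o(1))n/2$; Theorem~\ref{thm:HalfIntegralMatching} then gives $\nu^*(G') \ge (1-o(1))n/2$; and Lemma~\ref{lem:LargeMatchingInherited} transfers this to $\nu^*(R_2) \ge (1-o(1))k/2$. \emph{Odd cycles of bounded length}: since $G$ is $\beta$-far from bipartite, the graph $G'$ (and hence $R_2$) contains many short odd cycles, which will be used to switch the parity of walks.

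Next, I would upgrade the fractional matching into super-regular structure. Theorem~\ref{thm:HalfIntegralMatching}\ref{it:Mb} supplies a half-integral matching in $R_2$ of weight $(1-o(1))k/2$, and Lemma~\ref{lem:matchingBlowUp} converts it into an integer matching $M'$ of weight $(1-o(1))k$ in the $2$-blow-up $2\times R_2$. Interpreting $2 \times R_2$ as splitting each $V_i$ into halves $V_i^1, V_i^2$, Lemma~\ref{lem:Slicing} guarantees each pair corresponding to an edge of $M'$ is $(3\varepsilon, d_2/2)$-regular, and Lemma~\ref{lem:Super-regular} then makes these pairs super-regular; Lemma~\ref{lem:ideal} reserves a small ideal inside each pair for later flexibility. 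The few unmatched half-clusters are folded into $U$, and by Lemma~\ref{lem:conK2} the graph $2 \times R_2$ remains $\beta' k$-iron.

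The core of the proof is the skeleton construction. Given prescribed endpoint-pairs $x_1, y_1, \ldots, x_{\ell'}, y_{\ell'}$ outside $U$ with $\ell' \le C$, each endpoint lies in some half-cluster; denote these $X_i, Y_i \in V(2 \times R_2)$. I would build walks $W_1, \ldots, W_{\ell'}$ in $2\times R_2$ from $X_i$ to $Y_i$ such that together they traverse every matched edge of $M'$ exactly once, and such that each traversal respects the parity imposed by the bipartite structure of each matched super-regular pair. Consecutive matching edges would be joined using the $\beta' k$-iron-connectivity of $2\times R_2$ together with Lemma~\ref{lem:minDegDiam}, while Lemma~\ref{lem:witNOTiron} is invoked to preclude the bipartite-like obstructions that could block such joinings. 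When the naive connection would force a wrong-parity entry into some matched pair, the walk is detoured through a nearby odd cycle (supplied by the third feature of $R_2$) to flip parity. Once the skeleton is finalised, Lemma~\ref{lem:blow-up} turns each matched pair into a Hamilton path whose endpoints are dictated by the skeleton, and both the reservoir vertices and the $V_0$-vertices are absorbed into these Hamilton paths one by one using the high connectivity of $G$ (each exceptional vertex has $\ge \beta n$ neighbours, hence many places where it can be spliced into a nearby path). The main obstacle I expect is the simultaneous handling of parity and full coverage: the bipartite structure inside each matched super-regular pair imposes a parity on every traversal, and a globally inconsistent parity system would fail precisely when the reduced graph is bipartite---which is exactly what the $\beta$-far-from-bipartite hypothesis rules out via the odd cycles in $R_2$.
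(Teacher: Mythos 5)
Your high-level architecture tracks the paper closely: apply the degree-form Regularity Lemma with two densities, deduce a near-perfect fractional matching in the reduced graph via $\fvc(G)=n/2$, $\fvc$-robustness under edge deletion, LP-duality, and Lemma~\ref{lem:LargeMatchingInherited}; pass to the $2$-blow-up and Lemma~\ref{lem:matchingBlowUp} to integralize; make the matched pairs super-regular (Lemma~\ref{lem:Super-regular}) and reserve ideals (Lemma~\ref{lem:ideal}); inherit iron connectivity in the reduced graph; and finish with the Blow-up Lemma~\ref{lem:blow-up}. All of that is exactly what the paper does, except that the paper is careful to extract the matching from the \emph{low}-density reduced graph $R_1$ while using the \emph{high}-density reduced graph $R_2$ for all connectivity arguments --- a distinction you blur, but that is a minor point.

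The genuine gap is in the skeleton step. You propose to build walks in $2\times R_2$ ``that together traverse every matched edge exactly once'' and to ``detour through a nearby odd cycle'' whenever a walk would arrive at a matched pair with the wrong parity. Two things go wrong. First, the bookkeeping is off: in the paper the walks $Q_i$ connecting the fixed endpoints and the exceptional vertices do \emph{not} traverse the matched edges at all --- one edge $u_{r+i}v_{r+i}$ is planted inside each matched pair directly, the $Q_i$ are short ($O(1/(d_2\beta^2))$) transient connectors that pass through four vertices of any pair they touch, and the matched pairs are only later filled up by Lemma~\ref{lem:blow-up}; an Euler-tour-style traversal of $M'$ is neither what is needed nor what the counting (A2), (A$2'$), (B3) permits. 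Second and more fundamentally, the odd-cycle detour mechanism is left entirely unresolved: you would need short odd cycles to be reachable from essentially every cluster while avoiding the growing list of ``used-up'' clusters, and you give no argument for this. The paper's actual device is different and cleaner: it defines an auxiliary \emph{shifted digraph} $R^*$ on the vertex set of $R_2'$, whose directed edges $\overrightarrow{B_jA_i}$, $\overrightarrow{A_jB_i}$, $\overrightarrow{A_jA_i}$, etc., are chosen so that a directed path in $R^*$ lifts automatically to a walk in $R_2'$ that enters each matched pair on the correct side --- the parity constraint is baked into $E(R^*)$ rather than being patched after the fact. The whole burden then collapses to proving Claim~\ref{AC:R^* highly connected} that $R^*$ is $\Theta(k')$-strongly connected, and it is precisely in Case~B of that claim that the $\beta$-far-from-bipartite hypothesis is invoked (via Lemma~\ref{lem:witNOTiron} in Case~A, and directly in Case~B), so your intuition about \emph{where} the non-bipartiteness enters is correct even though your proposed mechanism for exploiting it is not the one that works.
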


\addtocounter{bipfar}{\value{theorem}}

\begin{theorem}\label{thm:RobustOKbip}
For every $c\in (0,\frac1{17})$, $\gamma > 0$ and $C\in\mathbb N$ there an exists $N_2$ such that for every vertex-transitive graph $G$ of order $n\ge N_2$ the following holds. Suppose $G$ is $cn$-iron  and $c^4$-close to bipartiteness. Let $A\dcup B$ be the bipartition of $G$ given by Lemma~\ref{lem:almostBipBalanced}. Then there exists a set $U\subset V(G)$ with $|U|<\gamma n$ such that $G$ is $C$-bipathitionable with exceptional set $U$ with respect to the partition $A\dcup B$.
\end{theorem}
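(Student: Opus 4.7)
The plan is to adapt the connected-matching argument referenced for Theorem~\ref{thm:RobustOKbipfar} while carefully respecting the bipartition $A,B$ provided by Lemma~\ref{lem:almostBipBalanced}. By Remark~\ref{rem:bipartiteISvt} the bipartite subgraph $G':=G[A,B]$ is vertex-transitive, and Lemma~\ref{lem:almostBipBalanced} gives $|A|=|B|=n/2$ together with minimum degree at least $(c-6c^2)n$ in $G'$. First, I would apply the degree form of the Regularity Lemma (Lemma~\ref{lem:SRL}) to $G'$ with parameters $\epsilon\ll d\ll c, 1/C, \gamma$ and initial partition $\{A,B\}$, so that every cluster is contained in $A$ or in $B$. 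The resulting reduced graph $R$ is bipartite, with color classes $\mathcal V^A,\mathcal V^B$ of equal size $k/2$. Since $G'$ is vertex-transitive, Lemma~\ref{lem:CoverVertexTr} gives $\fvc(G')=n/2$, and hence $\nu^*(G')=n/2$ by Theorem~\ref{thm:HalfIntegralMatching}\ref{it:Ma}. Applying Lemma~\ref{lem:coverAfter} to the subgraph surviving the regularization, followed by Lemma~\ref{lem:LargeMatchingInherited}, yields $\nu^*(R)\ge(1-\epsilon')k/2$; K\"onig's theorem for the bipartite graph $R$ upgrades this to an integer matching $M\subset E(R)$ missing at most $\epsilon' k$ clusters. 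Finally, Lemma~\ref{lem:Super-regular} makes every pair of $M$ into a $(2\epsilon,d/2)$-super-regular pair.

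The next step is to turn the $cn$-iron connectivity of $G$ into a usable connectivity property of $R$. I would argue, via the contrapositive of Lemma~\ref{lem:witNOTiron}, that after contracting each matching pair of $M$ to a single super-vertex, the resulting auxiliary digraph $D$ (with arcs coming from the $\epsilon$-regular pairs of $R$) is $h$-strongly-connected for some $h=h(c,C)$. Lemma~\ref{lem:minDegDiam} then ensures that any two super-vertices are joined by a short directed walk. Because $R$ is bipartite with respect to $\mathcal V^A\cup\mathcal V^B$, every such walk naturally alternates between $\mathcal V^A$ and $\mathcal V^B$, which is precisely the alternation any path in $G'$ must obey.

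To build the bipathition, fix terminals $x_1,\ldots,x_{\ell'},y_1,\ldots,y_{\ell'}\notin U$ with $\ell'\le C$ satisfying the balance condition~\eqref{eq:BIrequirement}. Take $U:=V_0\cup\bigcup\{V_i:V_i\text{ uncovered by }M\}$, which by construction has size less than $\gamma n$. First, route each terminal to a prescribed endpoint inside an endvertex of some super-regular pair by a short path, using the iron connectivity of $G$ to absorb the $U$-vertices on the way (so that every $U$-vertex lies on some $P_i$). Second, use the $h$-strong-connectivity of $D$ (together with Lemma~\ref{lem:minDegDiam}) to select internally disjoint walks $W_1,\ldots,W_{\ell'}$ in $D$ whose union visits every super-vertex exactly once; the balance condition~\eqref{eq:BIrequirement} is exactly what is needed to make such a system of walks feasible. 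Third, apply the Blow-up Lemma (Lemma~\ref{lem:blow-up}) inside each super-regular pair to convert these macro-walks into Hamilton-type paths with the prescribed endpoints, and concatenate them through the regular connecting edges of $R$ to obtain $P_1,\ldots,P_{\ell'}$ which cover $V(G)\setminus U$.

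The main obstacle is to maintain full consistency with the bipartition at every step of the construction: the routing of terminals through $U$, the choice of walks in $D$, the handover between consecutive super-regular pairs, and the final application of Lemma~\ref{lem:blow-up} must all be arranged so that the alternation between $A$ and $B$ never breaks, which is what turns~\eqref{eq:BIrequirement} into a feasibility condition instead of an obstruction. A closely related technical issue is the bookkeeping that ensures every vertex of every super-regular pair (including the $\epsilon m$ vertices displaced into $V_0$ by Lemma~\ref{lem:Super-regular}) is actually covered by exactly one path $P_i$; this requires a careful absorption argument before invoking the Blow-up Lemma, analogous to the one that will appear in the proof of Theorem~\ref{thm:RobustOKbipfar} but adapted to the rigid $A$-versus-$B$ parity of the bipartite setting.
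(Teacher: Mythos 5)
Your overall skeleton is right: apply the Regularity Lemma to $G[A,B]$ with prepartition $\{A,B\}$, extract a large matching $M$ from the reduced graph, translate iron connectivity into strong connectivity of an auxiliary digraph on the matching pairs, then route terminals and blow up. The K\"onig shortcut to an integral matching is legitimate and bypasses the $2$-blow-up trick used in the non-bipartite proof, and your ``contract each matching pair to a super-vertex'' digraph $D$ is, up to reformulation, exactly the restriction $R^*[\mathcal{A}]$ that the paper works with in the bipartite case (where the unrestricted $R^*$ splits into two components, one per side).

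The gap is in your second and third steps. You ask for internally vertex-disjoint walks $W_1,\ldots,W_{\ell'}$ in $D$ whose union visits every super-vertex \emph{exactly once}, i.e.\ a Hamilton path partition of $D$ with prescribed endpoints. Nothing you have — $h$-strong connectivity of $D$ and Lemma~\ref{lem:minDegDiam} — produces that, and you would be solving a Hamilton-type problem in the reduced graph, which is precisely the thing the whole argument is designed to avoid. The paper takes a different route: the routing walks $Q_i$ are deliberately \emph{short} (length $O(1)$ via Lemma~\ref{lem:minDegDiam}) and avoid most clusters; they only serve to connect $x_1,z_1,\ldots,z_r,y_1$ (and $x_j$ to $y_j$ for $j\ge 2$). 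Coverage of the clusters is arranged separately: one edge $u_{r+i}v_{r+i}$ is planted in each super-regular pair $(A_i,B_i)$ as part of the absorbing path $P_1$, then each such seed edge is extended via an ideal subpair (Lemma~\ref{lem:ideal}) and the Blow-up Lemma (Lemma~\ref{lem:blow-up}) to a Hamilton path of the unused vertices of that pair. That decouples ``cover every cluster'' from ``route between clusters'', which your sketch conflates. A related misattribution: the balance condition~\eqref{eq:BIrequirement} is not what makes a Hamilton decomposition of $D$ feasible; in the paper its role (together with the requirement that the connectors $u_i,v_i$ for exceptional $z_i$ land on the opposite side, and that $M$ crosses $\mathcal{A}$--$\mathcal{B}$) is to guarantee that, before the final Blow-up step, each super-regular pair has equally many unused vertices on its $A$-side and its $B$-side — the parity condition Lemma~\ref{lem:blow-up} needs. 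Without restructuring step two along these lines, the proposal does not go through.
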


After proving Theorem~\ref{thm:RobustOKbipfar} in detail below, we indicate necessary changes to make an analogous proof of Theorem~\ref{thm:RobustOKbip} work as well.

\begin{proof}[Proof of Theorem~\ref{thm:RobustOKbipfar}]
We begin by fixing additional constants $\eps,d_1,d_2,\gamma_1,\gamma_2$
satisfying
\[ 
0 < \eps \ll d_1 \ll \gamma_1 \ll \gamma_2 \ll d_2 \ll \gamma,\beta.
\]

Let $N' = 1/\eps$. Let $N(\eps,N',1)$ and $n_0(\eps,N',1)$ be the
numbers given by the Regularity Lemma. Set 
\begin{equation}\label{eq:n0Set}
n_0=\max\left\{\frac{N(\eps,N',1)}{\gamma_1},n_0(\eps,N',1)\right\}\;.
\end{equation}
Let $G$ be any $\beta n$-iron
connected vertex-transitive graph on $n \geqslant n_0$ vertices of valency $\Delta$. Apply the Regularity Lemma (see also Remark~\ref{rem:2densities}) with parameters
$\eps,N',\ell=1$ and $d_1,d_2$ to $G$ to obtain a partition $V_0\dcup V_1\dcup \ldots\dcup V_k$ of
$V(G)$. Let $G_1,G_2\subset G$ be the spanning subgraphs of $G$ given by the
Regularity Lemma corresponding to the densities $d_1$ and $d_2$ respectively. Let also $R_1$ and $R_2$ be the reduced graphs of $G$ with respect
to the above partition, the parameters $\eps$ and $d_1,d_2$ and the subgraphs $G_1$ and $G_2$ respectively. We write $m=|V_1|$.

We first claim that $R_1$ has a large fractional matching.
\begin{AuxiliaryBipfar}\label{AC:RBigM}
$\nu^*(R_1) \geqslant (1 - \frac{\gamma_1}2)\frac k2$. 
\end{AuxiliaryBipfar}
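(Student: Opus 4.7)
The plan is to exploit the linear-programming machinery developed in Section~\ref{sec:matching}. Since $G$ is vertex-transitive, Lemma~\ref{lem:CoverVertexTr} gives $\fvc(G)=n/2$, and Theorem~\ref{thm:HalfIntegralMatching}\ref{it:Ma} therefore yields $\nu^*(G)=n/2$; my goal is to push this essentially-perfect fractional matching down onto $R_1$ in three steps. First, show that the spanning subgraph $G_1$ returned by the Regularity Lemma retains almost all edges of $G$; second, invoke Lemma~\ref{lem:coverAfter} together with LP-duality to conclude $\nu^*(G_1)\approx n/2$; third, pass from $G_1$ to $R_1$ via Lemma~\ref{lem:LargeMatchingInherited} after discarding the exceptional set $V_0$. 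Routing through \emph{fractional} rather than integer matchings is essential, because Lemma~\ref{lem:coverAfter} is a statement about fractional vertex covers.

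For the first step I will observe that $\beta n$-iron connectivity of $G$ forces the valency bound $\Delta\ge 2\beta n+1$: otherwise a vertex $v$ of degree at most $2\beta n$ could be isolated by deleting at most $\beta n$ of its neighbours and at most $\beta n$ of its incident edges, contradicting iron-connectivity once $n$ is large. Hence $e(G)\ge\beta n^2$, and property~(iv) of the Regularity Lemma gives $e(G)-e(G_1)\le(d_1+\eps)n^2/2$, so $e(G_1)\ge(1-\delta)e(G)$ with $\delta:=(d_1+\eps)/(2\beta)$. Lemma~\ref{lem:coverAfter} then delivers $\fvc(G_1)\ge(1-\delta)n/2$, and LP-duality gives the same lower bound for $\nu^*(G_1)$.

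Now let $\tilde G$ denote the subgraph of $G_1$ induced on $V_1\cup\dots\cup V_k$. Restricting an optimal fractional matching of $G_1$ to $\tilde G$ costs at most $|V_0|\le\eps n$ in total weight, since the discarded edges meet $V_0$ and each exceptional vertex carries total matching weight at most $1$; thus $\nu^*(\tilde G)\ge(1-\delta)n/2-\eps n$. Properties~(v) and~(vi) of the Regularity Lemma together with the definition of $R_1$ guarantee that $\tilde G$ sits inside the $m$-blow-up of $R_1$, so Lemma~\ref{lem:LargeMatchingInherited} yields
\[
\nu^*(R_1)\;\ge\;\frac{\nu^*(\tilde G)}{m}\;\ge\;\Bigl(\frac{1-\delta}{2}-\eps\Bigr)\frac{n}{m}\;\ge\;\Bigl(\frac{1-\delta}{2}-\eps\Bigr)k,
\]
using $km\le n$. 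The hierarchy $\eps\ll d_1\ll\gamma_1\ll\beta$ guarantees $\delta+2\eps\le\gamma_1/2$, which rearranges to $(1-\delta)/2-\eps\ge(1-\gamma_1/2)/2$, finishing the claim.

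The main (mild) obstacle is the extraction of a valency lower bound from iron-connectivity; everything downstream is a routine accounting of losses controlled by the parameter $\beta$ rather than a weaker density proxy.
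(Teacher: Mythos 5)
Your proof is correct and follows essentially the same route as the paper's: use Lemma~\ref{lem:CoverVertexTr} and LP-duality (Theorem~\ref{thm:HalfIntegralMatching}\ref{it:Ma}) to get $\nu^*(G)=n/2$, pass to the spanning subgraph $G_1$ via property~(iv) of the Regularity Lemma together with Lemma~\ref{lem:coverAfter}, and project onto $R_1$ via Lemma~\ref{lem:LargeMatchingInherited}. The only cosmetic difference is the treatment of the exceptional set: where the paper asserts $\nu^*(G_1)=\nu^*(G_1-V_0)$ citing property~(v), you instead bound the loss $\nu^*(G_1)-\nu^*(G_1-V_0)\le|V_0|\le\eps n$ directly by noting that each deleted vertex carries fractional matching weight at most $1$, which is slightly more robust and does not rely on $V_0$ being isolated in $G_1$.
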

\begin{proof}[Proof of Claim~\ref{AC:RBigM}]
Observe that by Lemma~\ref{lem:CoverVertexTr} we have that $\fvc(G) = n/2$. We also have that 
\[
e(G_1) \geqslant e(G) - (d_1 + \eps)n^2\ge
\left(1-\frac{\gamma_1}{2}\right)e(G)\;,
\] 
where in the first inequality we used properties~(iii)--(v) of the Regularity Lemma and
in the second one we used the fact that $e(G)\ge \beta n^2/2$. By
Lemma~\ref{lem:coverAfter} we obtain that $\fvc(G_1) \geqslant (1 -
\frac{\gamma_1}{2})\frac{n}{2}$. Observe that
$\nu^*(G_1)=\nu^*(G_1-V_0)$ by property~(v) of the Regularity
Lemma. Therefore, combining Lemma~\ref{lem:LargeMatchingInherited} with
Theorem~\ref{thm:HalfIntegralMatching}\ref{it:Ma} we have
\[\nu^*(R_1)\ge\frac{\nu^*(G_1)}m=\frac {\fvc(G_1)}m\ge
\left(1-\frac{\gamma_1}{2}\right)\frac{n}{2 m}\ge
\left(1-\frac{\gamma_1}{2}\right)\frac k2\;. \qedhere\]
\end{proof}

\smallskip

The density $d_1$ was used to find a large matching in $R_1$ (cf.
Claim~\ref{AC:RBigM}). On the other hand, it is more convenient to work with
the higher threshold density $d_2$ to infer some connectivity properties of certain graphs that will be derived from $R_2$ (most importantly, to deduce Claim~\ref{AC:R^* highly
connected}).


Since $G$ is $\beta n$-iron and $\eps,d_2 \ll \beta$, properties (iii) and (iv)
of the Regularity Lemma (for the density $d_2$) imply that $G_2[V \setminus V_0]$
is $(\beta n/2)$-iron. We claim that the iron connectivity is inherited by the reduced graph $R_2$ as well.
\begin{AuxiliaryBipfar}\label{AC:Riron}
$R_2$ is $(\beta k/2)$-iron.
\end{AuxiliaryBipfar}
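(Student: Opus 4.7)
The plan is to prove the claim by contradiction, lifting a bad separator of $R_2$ to a bad separator of $G_2[V(G)\setminus V_0]$, which we already know is $(\beta n/2)$-iron. So suppose $R_2$ is not $(\beta k/2)$-iron; then there is a vertex set $\mathcal{U}\subset V(R_2)$ with $|\mathcal{U}|\le \beta k/2$ and an edge set $\mathcal{E}'\subset E(R_2)$ with $\Delta(\mathcal{E}')\le \beta k/2$ such that $R_2-\mathcal{U}-\mathcal{E}'$ is disconnected. Fix a nontrivial bipartition $V(R_2)\setminus \mathcal{U}=\mathcal{A}\dcup\mathcal{B}$ having no $\mathcal{E}'$-avoiding edge between $\mathcal{A}$ and $\mathcal{B}$.

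Next, I would lift this separator back to $G$. Set $U^*:=\bigcup_{V_i\in\mathcal{U}}V_i$ and define $A^*:=\bigcup_{V_i\in\mathcal{A}}V_i$ and $B^*:=\bigcup_{V_i\in\mathcal{B}}V_i$; these are nonempty and partition $V(G)\setminus(V_0\cup U^*)$. Let $E^*$ be the set of edges of $G_2$ lying in bipartite pairs $(V_i,V_j)$ with $V_iV_j\in\mathcal{E}'$. Using $km\le n$, we get
\[
|U^*|\le |\mathcal{U}|\cdot m\le \tfrac{\beta k}{2}\cdot m\le \tfrac{\beta n}{2}.
\]
For the edge set, any vertex $v\in V_i$ is incident in $E^*$ only to vertices of clusters $V_j$ with $V_iV_j\in\mathcal{E}'$, so
\[
\deg_{E^*}(v)\le \deg_{\mathcal{E}'}(V_i)\cdot m\le \tfrac{\beta k}{2}\cdot m\le \tfrac{\beta n}{2},
\]
so $\Delta(E^*)\le \beta n/2$.

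The crucial step is verifying that removing $U^*$ and $E^*$ from $G_2[V(G)\setminus V_0]$ disconnects $A^*$ from $B^*$. Take any $V_i\in\mathcal{A}$ and $V_j\in\mathcal{B}$. By construction of $\mathcal{A},\mathcal{B}$, either $V_iV_j\in\mathcal{E}'$, in which case all edges of $G_2$ between $V_i$ and $V_j$ have been removed into $E^*$, or $V_iV_j\notin E(R_2)$. In the latter case property~(vi) of the Regularity Lemma (applied with density $d_2$) forces the $\eps$-regular pair $(V_i,V_j)$ to have density $0$ in $G_2$, i.e.\ it contains no edges. Hence $G_2[V(G)\setminus V_0]-U^*-E^*$ has no edge between the nonempty sets $A^*$ and $B^*$, contradicting the fact established just before the claim that $G_2[V(G)\setminus V_0]$ is $(\beta n/2)$-iron.

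The main subtlety is really only the second step: one has to be careful that both $|U^*|$ and the maximum degree of $E^*$ come out bounded by $\beta n/2$ simultaneously, which is exactly why iron connectivity is defined in terms of $\Delta(E')$ rather than $|E'|$. The rest is routine bookkeeping, relying on the zero-or-$d_2$ dichotomy in property~(vi) to rule out stray edges in $G_2$ that are not accounted for by $\mathcal{E}'$.
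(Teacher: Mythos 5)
Your proof is correct and follows the same route as the paper's: lift a hypothetical bad separator of $R_2$ to a vertex/edge separator of $G_2[V\setminus V_0]$ using $km\le n$ to control $|U^*|$ and $\Delta(E^*)$, and appeal to the $(\beta n/2)$-iron connectivity established just before the claim. Your explicit invocation of property~(vi) (zero-or-$d_2$ density dichotomy) to rule out stray edges is a welcome elaboration of a step the paper leaves implicit, but the argument is the same.
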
 
\begin{proof}[Proof of Claim~\ref{AC:Riron}] Indeed, suppose we could disconnect
$R_2$ by removing a set of clusters $S$ of size at most $\beta k/2$ together
with an edge set $F\subset E(R_2)$ with $\Delta(F)\le\beta k/2$. Let $E'\subset
E(G_2[V \setminus V_0])$ be the set of edges contained in the regular pairs corresponding to $F$.
Then we could also disconnect $G_2[V \setminus V_0]$ by removing all vertices belonging to the clusters of $S$ together with the edge set $E'$. However, the clusters of $S$ contain at most $\beta k m/2 \leqslant \beta n/2$ vertices and also $\Delta(E')\le \beta k m/2 \leqslant
\beta n/2$. This would contradict the $(\beta n/2)$-iron connectivity of $G_2[V
\setminus V_0]$.
\end{proof}

For each $1 \leqslant i \leqslant k$, we arbitrarily partition $V_i$ into two
parts $V_i^1$ and $V_i^2$ of equal sizes. In the case that the $V_i$'s have odd
sizes, then before the partitioning we move an arbitrary vertex from each cluster
into $V_0$. We denote the new exceptional set obtained by $V_0'$. We also define
a new graph $R'_1$ on vertex set $\{V_1^1,V_1^2,\ldots,V_k^1,V_k^2\}$ where
$V_i^{s}$ is adjacent to $V_j^t$ if and only if $V_i$ was adjacent to $V_j$ in
$R_1$. Similarly, we define a graph $R'_2$ on the same vertex set as $R'_1$ to
be the 2-blow-up of $R_2$. By Lemma~\ref{lem:Slicing} every edge of $R'_1$ corresponds
to a $(3\eps,d_1/2)$-regular pair, and every edge of $R'_2$ corresponds
to a $(3\eps,d_2/2)$-regular pair. We have  $R_1'=2\times R_1$,
$R_2'=2\times R_2$, and $R'_2\subset R'_1$. Consider a matching $M$ in $R'_1$ of
weight at least $(1-\frac{\gamma_1}2)k$. Such a matching exists by Claim~\ref{AC:RBigM} and by Lemma~\ref{lem:matchingBlowUp}.

Observe that $R'_1$ is itself a reduced graph of the partition $V_0'\dcup V_1^1\dcup V_1^2\dcup \ldots\dcup V_k^1\dcup V_k^2$ with respect to the parameters $3\eps$ and
$d_1/2$ and some subgraph $G_1'$ of $G$. In particular, we can
apply Lemma~\ref{lem:Super-regular} to $R_1'$ and the matching $M$ to remove
exactly $3\eps m$ vertices from each cluster of $R_1'$ so that every pair of
clusters corresponding to an edge of $M$ is $(6\eps,d_1/4)$-super-regular while
every pair of clusters corresponding to an edge of $R_1'$  is
$(6\eps,d_1/4)$-regular. It also follows that
every pair of these modified clusters corresponding to an edge of $R'_2$  is $(6\eps,d_2/4)$-regular.

\def\k{{k'}}\def\m{{m'}}
We now move all clusters of $R'_1$ which are not incident to the matching $M$
into the exceptional set. This modification is also performed in the graph
$R'_2$.
Let $\k$ be the number of clusters of this modified graph $R'_1$, and $\m$ be
the size of each of its clusters, which are denoted by $V'_1,\ldots,V'_\k$ (and we write $V'_0$ for the exceptional set).

The modified graph $R'_2$ is obtained from $2\times R_2$ by removing a small number of
vertices. From Claim~\ref{AC:Riron} and Lemma~\ref{lem:conK2} we get that
$2\times R_2$ is $(\beta k/2)$-iron. Therefore
\begin{equation}\label{eq:Riron}
\mbox{$R'_2$ is $(\frac{\beta \k}{5})$-iron.}
\end{equation}

From now on, all references to $R_1',R_2'$ and the matching $M$ will be to these modified versions. 

By the above, there is a partition of the vertices of $G$ into $\k+1$ classes
$V'_0\dcup V'_1\dcup \ldots\dcup V'_\k$, and a spanning subgraph $G'$ of $G$ with the following properties:
\begin{itemize}
\item[(a)] $1/\eps \leq \k \leq 2N(\eps,N',1)\le 2\gamma_1 n$ (using the
bound~\eqref{eq:n0Set}).
\item[(b)] $|V'_0| \leq 2\gamma_1n, |V'_1| =
\cdots = |V'_\k| = \m$.
\item[(c)] $\deg_{G'}(v) \geq \deg_G(v) - 3\gamma_1 n$ for every $v \in
V(G)\setminus V'_0$.
\item[(d)] $G'[V'_i]$ is empty for every $0 \leq i \leq \k$.
\item[(e)] All pairs $(V'_i,V'_j)$ with $1 \leq i < j \leq \k$ are
$6\eps$-regular with density either 0 or at least $d_1/4$.
\item[(f)] There is a $\beta k'/5$-iron graph $R'_2$ on vertex set
$V'_1,\ldots,V'_\k$ such that every edge of $R'_2$
corresponds to a $(6\eps,d_2/4)$-regular pair in $G$.
\item[(g)] There is a perfect matching $M$ on the complete graph formed by
the clusters $V'_1,\ldots,V'_\k$. Further,  every edge of $M$
corresponds to a $(6\eps,d_1/4)$-super-regular pair in $G$.
\end{itemize}

If $XY\in M$, then we call $Y$ the \emph{partner} of $X$. Let us denote the edges of $M$ by $A_iB_i$ for $1 \leqslant i \leqslant \k/2$.
Using Lemma~\ref{lem:ideal} with $\theta = d_1^2$ for each $1 \leqslant i
\leqslant \k/2$, we find $A_i^*$ and $B_i^*$ with $|A_i^*| = |B_i^*| = d_1^2\m$,
such that $(A_i^*,B_i^*)$ is an $(6\eps/d_1^2, d_1^3/16)$-ideal for $(A_i,B_i)$.

We now define the exceptional set $U$ in the statement of the theorem as follows:
\[ U = V'_0\cup \bigcup_{i=1}^{\k/2}(A_i^* \cup B_i^*).\]
Observe that 
\[|U| \leqslant 2\gamma_1 n + d_1^2n \leqslant 3\gamma_1n < \gamma n.\] 
Suppose now that we are in the setting of the theorem, that is, we are given distinct vertices $x_1,y_1,\ldots,x_\ell,y_\ell\in V(G)\setminus U$ (where $1\le \ell\le C$), and our task is to find a system $\mathcal S$ of $\ell$ vertex-disjoint of paths that partition $V(G)$. Furthermore it is required that $x_j$ and $y_j$ are the endvertices of the $j$-th path.

Our first aim is to find a system $\mathcal S' = \{P_1,\ldots,P_{\ell}\}$ of $\ell$ vertex-disjoint paths, with the path $P_j$ having endvertices $x_j$ and $y_j$. We want $\mathcal S'$ to meet the following properties:

\begin{itemize}
\item[(A1)] $V(\mathcal S')$ contains all vertices of $V'_0$;
\item[(A2)] for each $i\in [\k/2]$, we have that $|V(\mathcal S') \cap A_i| =
|V(\mathcal S') \cap B_i|$;
\item[(A3)] for each $i\in [\k/2]$, there is an edge of $\mathcal S'$ whose
respective endvertices lie in $A_i$ and $B_i$;
\item[(A4)] for each $i\in[\k/2]$, we have that $|V(\mathcal S') \cap A_i^*| =
|V(\mathcal S') \cap B_i^*| = 0$.
\end{itemize}
Having obtained this system $\mathcal S'$ we can then find a complete extension
$\mathcal S$ of $\mathcal{S}'$ as follows: For each $i\in[k'/2]$ let $e_i =
a_ib_i$ be an edge of $\mathcal S'$ with $a_i \in A_i$ and $b_i \in B_i$ as
guaranteed by~(A3). Let $A_i'=(A_i \setminus V(\mathcal
S'))\cup\{a_i\}$ and $B_i'=(B_i \setminus V(\mathcal
S'))\cup\{b_i\}$. Since $(A_i^*,B_i^*)$ is an $(\frac{6\eps}{d_1^2},
\frac{d_1^3}{16})$-ideal for $(A_i,B_i)$ and since by property~(A4) the system
$\mathcal S'$ does not meet $A_i^* \cup B_i^*$, we have that the pair  $(A_i',B_i')$ is $(\frac{6\eps}{d_1^2},\frac{d_1^3}{16})$-super-regular. By property (A2) we also have that $|A_i'| = |B_i'|$ so we can apply Lemma~\ref{lem:blow-up} to deduce that $G[A_i'\cup B_i']$ contains a Hamilton path $\tilde{P_i}$ with endvertices $a_i$ and $b_i$. We now replace the edges $e_i$ by the paths $\tilde{P_i}$ for each $1 \leqslant i \leqslant \k/2$ to obtain a new system $\mathcal S$ containing all vertices of $V'_1 \cup \cdots \cup V'_\k$. Since by property (A1) it also contains all vertices of $V'_0$, then $\mathcal S$ is a complete extension of $\mathcal S'$ as asserted by the theorem.

It therefore remains to prove that we can find a system  $\mathcal S'$ satisfying the properties (A1)--(A4). In order to prove that, it will  be actually more convenient to demand $\mathcal S'$ to satisfy the following strengthening  of property~(A2) as well:
\begin{itemize}
\item[(A$2'$)] for each $i\in[\k/2]$, we have that $|V(\mathcal S') \cap A_i| =
|V(\mathcal S') \cap B_i| \leqslant 2C\sqrt{\gamma_1}m'$.
\end{itemize}
Finally we set aside disjoint subsets $D_{x_1},D_{y_1},\ldots,D_{x_{\ell}},D_{y_{\ell}}$ of sizes exactly $d_1^2m'$ each as follows: For each vertex $v$ amongst $x_1,y_1,\ldots,x_{\ell},y_{\ell}$ if $v$ belongs to the cluster $X \in V(R_2')$, and $Y$ is the partner of $X$, then we take any subset $D_v$ of $\neighbor_G(v) \cap (Y \setminus Y^*)$ of size exactly $d_1^2m'$ which is disjoint from any other $D_u$'s already defined. This is possible as by the super-regularity of the pair $(X,Y)$, vertex $v$ has at least $d_1m'/4$ neighbours in $Y$ with at most $d_1^2m'$ of them lying in $Y^*$ and with at most $2C d_1^2 m'$ of them lying in other $D_u$'s. These sets will enable us to have a choice of at least $d_1^2 m'$ vertices when choosing the neighbour of $v$ in the path containing $v$ as an endvertex in the system $\mathcal{S}'$. This can be immediately guaranteed provided that we further demand the following:
\begin{itemize}
\item[(A5)] for each $v \in \{x_1,y_1,\ldots,x_{\ell},y_{\ell}\}$, only the path with endvertex $v$ from the system $\mathcal{S}'$ is allowed to meet $D_v$ and furthermore it is only allowed to meet it at the neighbour of $v$ in this path. 
\end{itemize}
Let us write $D$ for the union of all $D_v$ with $v \in \{x_1,y_1,\ldots,x_{\ell},y_{\ell}\}$ and observe that 
\[ |D| = 2\ell d_1^2 m' \leqslant 2Cd_1^2 m'.\]

We will satisfy (A1) by insisting that the first path $P_1$ of $\mathcal S'$ contains all the vertices of $V_0'$. Let us write $r$ for the size of $V_0'$ and $z_1,\ldots,z_r$ for its vertices. 

\begin{AuxiliaryBipfar}\label{AC:neighborsGarbage}
There are clusters $U_1,W_1,\ldots,U_r,W_r$ of $R_2'$ such that:
\begin{itemize}
\item[(i)] For each $1 \leqslant i \leqslant r$, the cluster $U_i$ is different from $W_i$ and from the partner of $W_i$. 
\item[(ii)] For each $1 \leqslant i \leqslant r$, vertex $z_i$ has at least $\beta m'/2$ neighbours in $U_i \setminus (D \cup U)$.
\item[(iii)] For each $1 \leqslant i \leqslant r$, vertex $z_i$ has at least $\beta m'/2$ neighbours in $W_i \setminus (D \cup U)$.
\item[(iv)] Each cluster appears at most $\sqrt{\gamma_1}m'$ times in the list $U_1,W_1,\ldots,U_r,W_r$. 
\end{itemize}
\end{AuxiliaryBipfar}

\begin{proof}[Proof of Claim~\ref{AC:neighborsGarbage}]
The clusters $U_1,W_1,\ldots,U_r,W_r$ can be chosen greedily. We proceed
sequentially for $i=1,\ldots,r$. At any point we will have chosen at most $2r \le 4 \gamma_1 n$ clusters. So at most 
\[ \frac{4\gamma_1 n}{\sqrt{\gamma_1}m'} \le 5\sqrt{\gamma_1}k'\] 
clusters are not allowed to be chosen again because of (iv). So each $z_i$ has at most $3\gamma_1 n$ neighbours in $U$, at most $2Cd_1^2m'$ neighbours in $D$, at most $5\sqrt{\gamma_1}n$ neighbours in clusters which are not allowed to be chosen because of (iv), and at most $2m'$ neighbours which are not allowed to be chosen because of (i). So at the point when we want to choose $U_i$ or $W_i$, vertex $z_i$ has at least $\beta n/2$  neighbours which belong to sets of the form $V \setminus (U \cup D)$ for some cluster $V$.  But there are at most $k'$ such clusters so there is a choice of cluster which does not violate (i)--(iv).
\end{proof}

For each $1 \leqslant i \leqslant r$ we will choose neighbours $u_i,w_i$ of $z_i$ such that $u_i \in U_i \setminus (D \cup U)$ and $w_i \in W_i \setminus (D \cup U)$. These will not be chosen from the beginning but rather during the construction of the system $\mathcal{S}'$. Suppose that at some point during the construction, we want to choose $u_i \in U_i \setminus (D \cup U)$. By condition (ii) of Claim~\ref{AC:neighborsGarbage} there are $\beta m'/2$ neighbours of $z_i$ in $U_i \setminus (D \cup U)$ which we are allowed to choose from. By (A$2'$) at most $2C\sqrt{\gamma_1}m'$ of those vertices have already been used and so there are at least another $\beta m'/3$ which we can freely choose from. We write $Z_i$ for the partner of $U_i$. The pair $\big( U_i , Z_i \setminus (D \cup U)\big)$ is a subpair of the regular pair $(U_i,Z_i)$, and hence itself regular. Thus, all but $6\eps m'$ of those vertices of $U_i$ that we can still choose from, have at least $d_1m'/8$ neighbours in $Z_i \setminus (D \cup U)$ that have not yet been used in the construction of $\mathcal{S}'$. We also choose $w_i$ similarly. 

Also, for each  $i\in[\k/2]$, we will choose an edge $u_{r+i}w_{r+i}\in E(G)$ such that $u_{r+i}\in A_i\setminus (D \cup U)$ and $w_{r+i}\in B_i\setminus (D \cup U)$. Again these vertices will be not be chosen now but during the construction of the system $\mathcal{S}'$. When choosing them, we will further demand that $u_{r+1}$ has at least $d_1m'/8$ neighbours in $ B_i\setminus (D \cup U)$ which have not yet been used in the construction of $\mathcal{S}'$ and similarly $w_{r+1}$ has at least $d_1m'/8$ neighbours in $ A_i\setminus (D \cup U)$ which have not yet been used in the construction of $\mathcal{S}'$. Again this is possible since $(A_i,B_i)$ is $(6\eps,d_1/4)$-regular. 

Set $r'=r+k'/2$. The bounds $|V'_0|\le 2\gamma_1 n$, and
$k'\le 2\gamma_1 n$ (which is implied by~(a)) give that 
\begin{equation}\label{eq:r'bound}
r'\le 3\gamma_1n\;.
\end{equation}

The system $\mathcal S'=\{P_1,\ldots, P_\ell\}$ will be such that the path $P_1$ will contain all the $2$-paths $u_iz_iw_i$ (for $i=1,\ldots,r$) and all edges $u_iw_i$ (for $i=r+1,\ldots,r'$). Therefore, the path $P_1$ alone will guarantee (A1), i.e.~it will absorb all the vertices of $V'_0$. Further, the path $P_1$ alone will guarantee~(A3), i.e., for every $i\in[\k/2]$ there is an edge of $P_1$ between $A_i$ and $B_i$. We first describe in detail the construction of the path $P_1$.

In order to construct the path $P_1$, for each $0 \leqslant
i \leqslant r'$, we aim to find distinct vertices $u_1,w_1,\ldots,u_{r'},w_{r'}$ as described above and for each $0 \le i \le r'$ a path $Q_i$ in $G$ with endvertices $w_i$ and $u_{i+1}$; here $w_0 = x_1$ and $u_{r'+1}=y_1$. The path $P_1$ will be the union of these paths together with the 2-paths $u_iz_iw_i$ (for $i\in [r]$) and the edges $u_iw_i$ (for $i=r+1,\ldots,r'$). To guarantee that $\mathcal S'$ satisfies properties (A1)--(A5) and (A$2'$) we will require that the paths $Q_i$ satisfy the following properties:

\begin{itemize}
\item[(B1)] the paths $Q_i$ are disjoint and do not contain any vertex from $V'_0$;
\item[(B2)] for each $0 \leqslant i \leqslant r'$ and each $1 \leqslant j \leqslant \k/2$ we have that $|V(Q_i) \cap A_j| = |V(Q_i) \cap B_j|$;
\item[(B3)] for each $1 \leqslant j \leqslant \k/2$, we have that $|V(\cup_i Q_i) \cap A_j|, |V(\cup_i Q_i) \cap B_j| \leqslant 2\sqrt{\gamma_1} \m$;
\item[(B4)] for each $0 \leqslant i \leqslant r'$ and each $1 \leqslant j
\leqslant \k/2$ we have that $|V(Q_i) \cap A_j^*| = |V(Q_i) \cap B_j^*| = 0$;
\item[(B5)] the paths $Q_i$ do not meet any vertices of $D$ with the only possible exceptions being the neighbour of $x_1$ in $Q_0$ and the neighbour of $y_1$ in $Q_{r'+1}$.
\end{itemize}
To achieve these properties we will further demand that the following property is also satisfied:

\begin{itemize}
\item[(B6)] for each $0 \leqslant i \leqslant r'$, the path $Q_i$ has length at most $\gamma_1^{-1/3}$. 
\end{itemize}
Let us now show how this can be done. Suppose we have already chosen the vertices $u_1,w_1,\ldots,u_{i-1},w_{i-1}$, and the paths $Q_0,Q_1,\ldots,Q_{i-1}$ and we are now at the stage where we require to choose vertices $w_i$ to $u_{i+1}$ and the path $Q_i$.

We use~(B6) and~\eqref{eq:r'bound} to infer that the paths
$Q_0,Q_1,\ldots,Q_{i-1}$ contain at most $i \gamma_1^{-1/3} \le r'  \gamma_1^{-1/3}\le 3\gamma_1^{2/3}n$ vertices. In
particular, we have the following.
\begin{AuxiliaryBipfar}\label{AC:ToAvoid}
There are at most $3\gamma_1^{2/3}n/(\gamma_1^{1/2}m') \leqslant 4\gamma_1^{1/6}k'$ indices $j\in[k'/2]$ for which $|V(Q_0\cup Q_1 \cup \cdots \cup
Q_{i-1}) \cap A_j|\geqslant \sqrt{\gamma_1} m'$ or $|V(Q_0\cup Q_1 \cup \cdots \cup Q_{i-1}) \cap B_j| \geqslant \sqrt{\gamma_1} m'$.
\end{AuxiliaryBipfar}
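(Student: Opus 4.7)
The plan is a straightforward double-counting argument combined with the length bound from property~(B5). First I would bound the total number of vertices used by the already-constructed paths $Q_0,Q_1,\ldots,Q_{i-1}$. By (B5), each such path has length at most $\gamma_1^{-1/3}$, hence contains at most $\gamma_1^{-1/3}+1$ vertices. The number of such paths is at most $r'$, which is bounded by $3\gamma_1 n$ via~\eqref{eq:r'bound}. Multiplying, and absorbing the additive~$+1$ into the constant for $\gamma_1$ small enough, I obtain that
\[
\Bigl|V\bigl(Q_0\cup Q_1\cup\cdots\cup Q_{i-1}\bigr)\Bigr|\leqslant 3\gamma_1^{2/3}n.
\]

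Now for the main counting step: call an index $j\in[k'/2]$ \emph{bad} if either $|V(Q_0\cup\cdots\cup Q_{i-1})\cap A_j|\geqslant\sqrt{\gamma_1}\,m'$ or $|V(Q_0\cup\cdots\cup Q_{i-1})\cap B_j|\geqslant\sqrt{\gamma_1}\,m'$. Every bad index contributes at least $\sqrt{\gamma_1}\,m'$ vertices to the total count above, and these contributions are over the disjoint clusters $A_j\cup B_j$. Hence the number of bad indices is at most $3\gamma_1^{2/3}n/(\sqrt{\gamma_1}\,m')$, which establishes the first inequality in the claim.

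For the second inequality, I would translate the bound $3\gamma_1^{2/3}n/(\gamma_1^{1/2}m')=3\gamma_1^{1/6}\cdot(n/m')$ into a bound in terms of $k'$. From property~(b) we have $|V'_0|\leqslant 2\gamma_1 n$, so $k'm'=n-|V'_0|\geqslant(1-2\gamma_1)n$, which gives $n/m'\leqslant k'/(1-2\gamma_1)$. For $\gamma_1$ sufficiently small (say $\gamma_1<1/12$), this yields $n/m'\leqslant \tfrac{4}{3}k'$, and so $3\gamma_1^{1/6}(n/m')\leqslant 4\gamma_1^{1/6}k'$, as required. There is no real obstacle here — the claim is a simple volume/pigeonhole argument, and the only subtlety is invoking~\eqref{eq:r'bound} and property~(b) to translate vertex counts in $G$ to cluster counts in $R'_1$.
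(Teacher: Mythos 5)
Your proof is correct and follows the same route the paper takes (implicitly, in the sentence preceding the claim): bound the total number of vertices already used via (B5) and \eqref{eq:r'bound}, pigeonhole over disjoint clusters, and convert $n/m'$ to roughly $k'$ via property (b). The only wrinkle is that the claim as printed writes $\sqrt{\gamma_1}\,m$ where it must mean $\sqrt{\gamma_1}\,m'$, which you correctly read from context.
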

 When
constructing $Q_i$, we will make sure that no vertex of $Q_i$ is contained in such
clusters except possibly the first four and the last four vertices of $Q_i$. (It
might happen that $w_i$ or $u_{i+1}$ belong to such clusters so in this case
$Q_i$ definitely cannot avoid these clusters completely. By using at most four
vertices, and the high minimum degree of $R'_1$ we will be able to get out of these
forbidden clusters and then we will make sure that we never visit them again.)
 If we can achieve this then we can guarantee that for
each $1 \leqslant j \leqslant k'/2$, we have that \[ |V(\cup_i Q_i) \cap A_j|, |V(\cup_i Q_i) \cap B_j| \leqslant \sqrt{\gamma_1} m' + \gamma_1^{-1/3} +
8(r'+1) \leqslant 2\sqrt{\gamma_1}m', \] as required by property (B3).

For finding the paths $Q_i$ we will need to use an auxilary digraph $R^*$, which
should be viewed as a ``shifted version'' of $R'_2$. The vertex set of
$R^*$ is the same as the vertex set of $R'_2$ while its edge set is defined as \[ E(R^*)
= \left\{\overrightarrow{B_jA_i},\overrightarrow{B_iA_j} : A_iA_j \in
E(R'_2)\right\} \cup \left\{\overrightarrow{A_jB_i},\overrightarrow{A_iB_j} :
B_iB_j \in E(R'_2)\right\} \cup
\left\{\overrightarrow{A_jA_i},\overrightarrow{B_iB_j}:A_iB_j \in E(R'_2),i\neq
j\right\}.\]

\begin{AuxiliaryBipfar}\label{AC:R^* highly connected}
The digraph $R^*$ is $\left(\frac{d_2\beta^2 k'}{1000}\right)$-strongly connected.
\end{AuxiliaryBipfar}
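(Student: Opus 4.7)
The plan is to prove the claim by contradiction using the $(\beta k'/10)$-iron-connectivity of $R'_2$ established in \eqref{eq:Riron} together with Lemma~\ref{lem:witNOTiron}. Suppose for contradiction that there exists $U^*\subseteq V(R^*)$ with $|U^*|\le d_2\beta^2 k'/1000$ such that $R^*-U^*$ is not strongly connected. Then there is a partition $V(R^*)\setminus U^*=S\dcup T$ with both classes non-empty and no directed $R^*$-edge from $S$ to $T$. Unpacking the definition of $R^*$, this is equivalent to the statement that $N_{R'_2}(S')\cap T=\emptyset$, where $S'=\{X'\,:\,X\in S\}$ denotes the set of matching partners of $S$. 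Since iron-connectivity implies minimum degree, the minimum degree of $R'_2$ is at least $\beta k'/5$, hence every vertex of $R^*$ has out-degree and in-degree at least $\beta k'/5-1$, and so both $|S|,|T|\ge\beta k'/5-|U^*|\ge\beta k'/6$.

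I would next classify each matching pair $\{A_i,B_i\}$ by which of the sets $S$, $T$, $U^*$ contains each of its two endpoints, obtaining nine pair types which I denote (in natural set-intersection notation) $SS,ST,SU,TS,TT,TU,US,UT,UU$. Setting $L_1:=U^*\cup(U^*)'$ (of size at most $2|U^*|$), the hypothesis $N_{R'_2}(S')\cap T=\emptyset$ shows that within $V(R'_2)\setminus L_1$ every $R'_2$-neighbor of a $TS$-vertex lies in $ST$, and more generally there are no $R'_2$-edges between $SS\cup TS$ and $TS\cup TT$ inside the restricted graph. In particular, removing $L_1\cup ST$ from $R'_2$ separates the remaining vertex set into pieces among which the sets $SS$, $TS$ (as isolated vertices, with no internal edges), and $TT$ are pairwise disconnected.

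If $|ST|\le\beta k'/10-2|U^*|$, then $L_1\cup ST$ has size at most $\beta k'/10$; using the lower bounds $|S|,|T|\ge\beta k'/6$ together with the pair-type bookkeeping (for example, if $SS\cup US$ were empty, then $S\subseteq ST\cup SU$ would force $|S|\le|ST|+|U^*|$, which combined with the bound on $|ST|$ contradicts $|S|\ge\beta k'/6$) one verifies that at least two of the pieces $SS$, $TS$, $TT$ are non-empty, and the resulting disconnection of $R'_2$ contradicts \eqref{eq:Riron} directly. Otherwise $|ST|=|TS|>\beta k'/10-2|U^*|\ge\beta k'/11$. In this second case I would apply Lemma~\ref{lem:witNOTiron} with $\rho=\beta^2/400$ (so that $\sqrt\rho k'=\beta k'/20$ and $2\sqrt\rho k'=\beta k'/10$), taking $L_1$ as above, $L_2=ST$, $W_2=TS$, and $W_1$ equal to whichever of $SS\cup US$ or $TT\cup TU$ exceeds $\beta k'/10$ in size; the lower bounds on $|S|$ and $|T|$ guarantee that one such choice is available. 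The inclusion $N_{R'_2}(W_2)\subseteq L_1\cup L_2$ is immediate from the structural analysis above.

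The main obstacle will be verifying the edge bound $e_{R'_2}(L_2,V(R'_2)\setminus(L_1\cup L_2))\le\rho k'^2=\beta^2 k'^2/400$, since $L_2=ST$ could in principle have many $R'_2$-edges leaving the complement. I plan to establish this by moving the high-$R'_2$-degree vertices of $ST$ into the vertex part $L_1$: concretely, I will redefine $L_1$ to also include the set of $v\in ST$ with $\deg_{R'_2}(v,V\setminus(L_1\cup ST))>\rho k'$, checking via an averaging argument, that draws on the density $d_2$ of the regular pairs underlying edges of $R'_2$, that the number of such heavy vertices stays within the budget $\sqrt\rho k'=\beta k'/20$. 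This is the step where the factor $d_2$ in the claimed bound is crucial, and the conclusion of Lemma~\ref{lem:witNOTiron} will then contradict the $(\beta k'/10)$-iron-connectivity of $R'_2$, completing the proof.
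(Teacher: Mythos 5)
Your setup matches the paper's: your $S$, $T$, $U^*$ are the paper's $S_1$, $S_2$, $S_0$; your sets $SS, ST, TS, TT$ are the paper's $S_1^2, S_1^1, S_2^1, S_2^2$; your $L_1 = U^*\cup(U^*)'$ is the paper's $L_1 = S_0\cup S_1^0\cup S_2^0$; and your structural observation (no $R'_2$-edges among $SS\cup TS$ and $TS\cup TT$ outside $L_1$) is exactly the paper's. So this is not a different route — it is the same decomposition with renamed sets. However, there is a genuine gap in your second case, and it is precisely the place where the paper's proof needs (and uses) the hypothesis that $G$ is $\beta$-far from bipartiteness, a hypothesis your proposal never invokes.

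The gap: you assert that the lower bounds $|S|,|T|\ge\beta k'/6$ guarantee that one of $SS$, $TT$ has size larger than $2\sqrt\rho k'=\beta k'/10$. This is false. Since $|S|=|SS|+|ST|+|SU|$ and $|ST|$ can be close to $|S|$, the bound $|S|\ge\beta k'/6$ tells you nothing about $|SS|$. In fact, the situation $|ST|=|TS|\approx k'/2$ with $|SS|+|TT|\le\beta k'/10$ is fully consistent with $R'_2$ being $(\beta k'/10)$-iron (think of a balanced bipartite graph with parts roughly $ST$ and $TS$ and the matching crossing between them), and in this regime no admissible $W_1\subseteq SS\cup TT$ exists, so Lemma~\ref{lem:witNOTiron} cannot be applied. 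The paper handles exactly this situation as its Case~B: there, it shows that the partition $S_2^1$ (your $TS$) versus everything else exhibits $G$ as $c^4$-close to bipartite, contradicting the $\beta$-far hypothesis. Without this step, the claim is simply not provable from iron-connectivity alone (it is false for near-bipartite $R'_2$). A secondary issue: your planned averaging argument for the edge bound cannot work as stated, because you define heavy vertices via degree into $SS\cup TS\cup TT$, and the degree of $ST$-vertices into $TS$ is typically $\Theta(\beta k')$ (every $TS$-vertex has essentially all of its $R'_2$-degree into $ST$, and handshaking forces the reverse count to be large), so almost every vertex of $ST$ would be heavy and $|L_1|$ would blow up. What does work — and what the paper uses in \eqref{eq:Auxi} — is precisely the handshaking identity to show that $e_{R'_2}(ST,SS\cup TT)$ (with $TS$ \emph{excluded}) is tiny; your averaging should be done relative to $SS\cup TT$ only, and the lemma application then needs $W_2 = TS$ to be excluded from the edge count. (As a side remark, your suspicion that $L_2=ST$ could have many $R'_2$-edges leaving it is correct, and you might notice that the paper's own phrasing around \eqref{eq:Auxi} is a little loose on this point for the same reason; the fix is to exclude $W_2$ from the edge count, which is harmless since $N(W_2)\subseteq L_1\cup L_2$ anyway.)
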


\begin{proof}[Proof of Claim~\ref{AC:R^* highly connected}]
Suppose that $R^*$ is not $\left(\frac{d_2\beta^2 k'}{1000}\right)$-strongly connected.
That means that we can write $V(R^*)=S_0\cup S_1\cup S_2$, where
$|S_0|<d_2\beta^2 \k/1000$, $S_1,S_2\neq\emptyset$, and there are no directed
edges from $S_1$ to $S_2$. We partition further each $S_i$ ($i=1,2$) into three sets:
\begin{align*}
S_i^0&=\{X\in S_i: \mbox{partner of $X$ is in $S_0$}\}, \\
S_i^1&=\{X\in S_i\setminus S_i^0: \mbox{partner of $X$ is in $S_{3-i}$}\}, \\
S_i^2&=\{X\in S_i\setminus S_i^0: \mbox{partner of $X$ is in $S_i$}\}\;.
\end{align*}
(See Figure~\ref{fig:Rprime}.)
For the set $L_1=S_0\cup S_1^0\cup S_2^0$ we have
\begin{equation}\label{eq:S0small}
|L_1|\le\frac{d_2\beta^2 k'}{500}\;.
\end{equation}

\def\wdeg{\mathrm{d\overline{eg}}}
The graph $R'_1$ can be viewed as an edge-weighted graph, where the weight of
each its edge is the density of the corresponding regular pair. Thus the weights used
on the edges of $R'_1$ are in the interval $[d_1/4,1]$. In particular, we
have the notion of weighted degree which is defined for a cluster $X\in V(R'_1)$
as the sum of weights of edges incident with $X$, and is denoted $\wdeg(X)$. Observe that the property that all vertices of $G$ have the same degree gets inherited by the
weighted graph $R'_1$, that is, the valency $\Delta$ of the vertices of $G$ corresponds to weighted degrees of approximately $\Delta\frac{k'}{n}$ of the clusters $V_i'$. Taking into account that the relations $n\approx k'm'$ and $\deg_{G'}(v)\approx \Delta$ are only approximate, we get that each cluster $V_i'$, ($i \in [k']$) satisfies
\begin{equation}\label{eq:R'Reg}
(1 - \gamma_2)\frac{\Delta k'}{n} \leqslant \wdeg(V'_i) \leqslant (1 + \gamma_2)\frac{\Delta k'}{n}\;.
\end{equation}

The set $S_2^1$ is independent in $R'_2$ by the definition of the graph
$R^*$. Indeed, suppose that there is an edge $XY\in E(R'_2)$ inside $S_2^1$.
Then, by the definition of $R^*$, there is a directed edge from the partner of $X$ (which is in $S^1_1$) to $Y$, a contradiction to the assumption that there are no directed edges from $S^1_1$ to $S^1_2$. Further, it can be similarly checked that there are no edges between $S_2^1$ and $S_1^2\cup S_2^2$, or between $S_1^2$ and $S_2^2$. This is depicted on Figure~\ref{fig:Rstar}.
\begin{figure}[t]
     \centering
     \subfigure[Separation of the digraph $R^*$. There are no directed edges crossing from left to right. Vertices of $S_0\cup S_1^0\cup S_2^0$ are omitted from the picture.]{
          \label{fig:Rprime}
          \includegraphics[width=.45\textwidth]{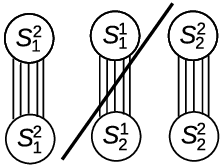}}
     \hspace{.07in}
     \subfigure[The situation in the graph $R'_2$. Allowed edges are
     depicted in grey.]{
          \label{fig:Rstar}
          \includegraphics[width=.45\textwidth]{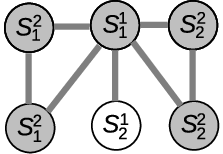}}
     \hspace{.07in}
     \caption{The digraph $R^*$ and the graph $R'_2$. The sets
     $S^2_i$ are split into two according to an arbitrary orientation given by the matching $M$.}
\end{figure}

At this point, we distinguish three cases. Suppose first that $S_1^1=\emptyset$.
Then the set $L_1$ witnesses (using the bound~\eqref{eq:S0small}) that $R'_2$ is not $\left(\frac{d_2\beta^2 k'}{500}\right)$-vertex connected,
and therefore not $\left(\frac{d_2\beta^2
k'}{500}\right)$-iron. This contradicts~\eqref{eq:Riron}. It remains to
consider
\begin{itemize}
\item \underline{Case A:} $S_1^1\neq \emptyset$ and $\max\{|S_1^2|,|S_2^2|\}>
\frac{\beta k'}2$, and
\item \underline{Case B:} $S_1^1\neq \emptyset$ and
$\max\{|S_1^2|,|S_2^2|\}\le\frac{\beta k'}2$.
\end{itemize}
Before diving into Case~A and Case~B separately, we make some calculations
which will turn out to be useful in both cases.

We have
\begin{equation}\label{eq:HS1}
\sum_{W\in S^1_2}\wdeg(W,S^1_1)\ge
 \sum_{W\in S^1_2}\left(\wdeg(W)-|L_1|\right)
 \overset{\eqref{eq:R'Reg},\eqref{eq:S0small}}\ge
 |S^1_2|\left((1- \gamma_2)\frac{\Delta k'}{n}-\frac{d_2\beta^2
 k'}{500}\right)\;.
\end{equation}
Using the facts that $R'_2\subset R'_1$ and that edges of $R'_2$ correspond to 
pairs of density at least $d_2/4$ we have 

\begin{align}\nonumber
e_{R'_2}\left(S_1^2\cup S_2^2,S^1_1\right)+e_{R'_2}\left(S^1_1\right)&\le
\frac4{d_2}\left(\sum_{W\in S^1_1}\wdeg(W)-\sum_{W\in
S^1_1}\wdeg(W,S_2^1)\right)\\ \nonumber &\overset{\eqref{eq:R'Reg}}\le
\frac4{d_2}\left(|S_1^1|(1+ \gamma_2)\frac{\Delta k'}{n}-\sum_{W\in
S^1_1}\wdeg(W,S_2^1)\right)\\
\label{eq:Auxi}
\mbox{[hand-shaking lemma]}\quad&\overset{\eqref{eq:HS1}}\le
\frac{4|S_1^1|}{d_2}\left(2\gamma_2\frac{\Delta k'}n+\frac{d_2\beta^2
k'}{500}\right) \le \frac{\beta^2 k'^2}{100}\;.
\end{align}

We now turn to dealing with Case~A. In this case it is our aim to show that
$R'_2$ is not $(\frac{\beta k'}{10})$-iron.

First, we show that $|S^1_2|>\frac{\beta k'}2$. Indeed, consider $A\in S^1_2$
arbitrary. As $|S_2^1|=|S_1^1|>0$, such an $A$ exists.  As $\deg_{R'_2}(A)\ge(1-2\gamma_2)\frac{\Delta k'}n$, and as $A$ can send edges (in the graph $R'_2$) only to $L_1$ and $S_1^1$, we get 
$$|S_2^1|=|S_1^1|\ge(1-2\gamma_2)\frac{\Delta
k'}n-|L_1|\overset{\eqref{eq:S0small}}> \frac{\beta k'}2\;.$$
We now utilize the assumptions of Case~A. Without loss of generality, assume that
$|S_1^2|>\frac{\beta k'}2$. Set $\rho=\frac{\beta^2}{100}$. The set $L_2=S^1_1$ satisfies by~\eqref{eq:Auxi} that $e_{R'_2}(L_2,V(R'_2)\setminus (L_1\cup L_2))\le \rho
(k')^2$. Further, we have two
disjoint sets $W_1=S^1_2$ and $W_2=S^2_1$ with $\neighbor(W_2) \subseteq L_1 \cup L_2$, and
$\min\{|W_1|,|W_2|\}>2\sqrt{\rho}k'$. Therefore, Lemma~\ref{lem:witNOTiron} applies, and we get that $R'_2$ is not
$(2\sqrt{\rho}k')$-iron. This contradicts~\eqref{eq:Riron}.

\smallskip
It remains to consider Case~B. In this case we get a contradiction by showing
that $G$ is close to a bipartite graph.

Indeed, consider first a partition $W \dcup S_2^1=V(R')$, where
$W=S_1^2\cup S_1^1\cup S_2^2\cup L_1$. The graph $R'_2$ is almost bipartite with respect to
the partition $W\dcup S_2^1$ since $S_2^1$ is independent and
$W$ is very sparse as the following calculation shows:
\begin{align*}
e_{R'_2}(W)&\le k'|L_1| + e_{R'_2}(S_1^2) + e_{R'_2}(S_2^2) + e_{R'_2}(S_1^2,S_2^2) + \left(e_{R'_2}\left(S_1^2\cup S_2^2,S^1_1\right)+e_{R'_2}\left(S^1_1\right)\right)\\
\mbox{[by Case~B,~\eqref{eq:S0small},~\eqref{eq:Auxi}]}\quad&\le
\frac{d_2\beta^2 k'^2}{500} + \frac{\beta^2 k'^2}{8} + \frac{\beta^2 k'^2}{8} + 0 + \frac{\beta^2k'^2}{100}\\
&\le \frac{\beta k'^2}{3}\;.
\end{align*}
The partition $W\dcup S_2^1=V(R'_2)$ induces a partition
$A\dcup B$ of $G$ (placing the  vertices of  $V'_0$ to the sets $A$ and $B$ arbitrarily) such that \[e_G(A)+e_G(B)\le e_{G_2}(A) + e_{G_2}(B) + 2d_2n^2 \le e_{R'_2}(W)(m')^2 + |V'_0|n + 2d_2n^2 <\beta n^2.\]
This is a contradiction to the fact that $G$ is $\beta$-far from bipartiteness.
\end{proof}

Recall that we were looking to choose vertices $w_i$ and $u_{i+1}$ as well as a path $Q_i$ from $w_i$ to $u_{i+1}$. Recall that we want $w_i \in W_i$ and $u_{i+1} \in U_{i+1}$. We write $Z$ for the partner of $U_{i+1}$. By Claim~\ref{AC:ToAvoid} there were at most $4\gamma_1^{1/6}k'$ clusters which we wanted to make sure
that their vertices are avoided by $Q_i$ (except perhaps the first four and last four vertices of $Q_i$). Let us write $S$ for the set of these clusters. 
Since by Claim~\ref{AC:R^* highly connected}  $R^*$ is $\left(\frac{d_2\beta^2 k'}{1000}\right)$-strongly connected and
since also $4\gamma_1^{1/6} k'\ll \frac{d_2\beta^2 k'}{1000}$, we have
that the digraph $R^*-S$ is $\left(\frac{d_2\beta^2
k'}{2000}\right)$-strongly connected. By Lemma~\ref{lem:minDegDiam} there is a directed
path $Q_i'$ in $R^*$ from $W_i$ to $Z$ avoiding $S$ of length 
$t \le \frac{2000}{d_2\beta^2}+1\ll \gamma_1^{-1/3}$.
Suppose $Q_i' = X_1 X_2 \cdots X_t$ where $X_1 = W_i$ and $X_t = Z$. For $i \in
[t]$, let $Y_i$ be the partner of $X_i$. It follows from the definition of
$E(R^*)$ that $Q_i'' = X_1Y_1X_2Y_2\cdots X_tY_t$ is a path in $R'_1$. Observe that by our construction, if a cluster belongs to $S$ then so does its partner. Therefore, since $Q_i'$ avoids $S$, so does $Q_i''$. Observe also that for each $j \in [t]$ the pair $X_jY_j$ is $(6\eps,d_1/4)$-super-regular and for each $j \in [t-1]$ the pair $Y_jX_{j+1}$ is $(6\eps,d_1/4)$-regular. 

We will show how to use $Q_i''$ to find a path 
$Q_i = p_1q_1r_1s_1p_2q_2r_2s_2\cdots p_tq_tr_ts_t$ in $G$, where $p_1 =
w_i,s_t=u_{i+1}$, and for each $j \in [t]$, $p_j,r_j\in X_j$ and $q_i,s_j\in
Y_j$. If we can do this automatically (B2),(B3) and (B6) are satisfied. If we can do it by avoiding all vertices of $Q_1,\ldots,Q_{i-1}$ and all vertices of $U$ then (B1) and (B4) will also be satisfied. By further avoiding all vertices of $D$ except possibly in the case of the neighbours of $x_1$ and $y_1$ then (B5) will also be satisfied.

We will do this as follows: First we will choose $p_1 = w_i \in X_1$ and $s_1 \in Y_1$. The only restrictions apart from the ones mentioned in the previous paragraph are: 
\begin{itemize}
\item[(i)] $p_1 = w_0 = x_1$ if $i=0$, $p_1$ is a neighbour of $z_i$ for $1 \leqslant i \leqslant r$, $p_1$ is a neighbour of $u_i$ (which has been already chosen) if $r+1 \leqslant i \leqslant r'$
\item[(ii)] $p_1$ has at least $d_1m'/8$ neighbours in $Y_1 \setminus (U \cup D)$ which have not yet been used in the construction of $\mathcal{S}'$ except possibly in the case $i=0$.
\item[(iii)] $s_1$ has at least $d_1m'/8$ neighbours in $X_1 \setminus (U \cup D)$ which have not yet been used in the construction of $\mathcal{S}'$
\item[(iv)] $s_1$ has at least $d_1m'/8$ neighbours in $X_2 \setminus (U \cup D)$ which have not yet been used in the construction of $\mathcal{S}'$
\end{itemize}
We have already seen that (i) can be achieved in such a way that (ii) is also satisfied. By the regularity of the pairs $(X_1,Y_1)$ and $(Y_1,X_2)$ conditions (iii) and (iv) give a total of $12\eps m'$ vertices which are not allowed to be chosen. So we can choose such an $s_1$ avoiding also all vertices of $U \cup D$ and all vertices which have been already used. 

Looking now at all neighbours of $p_1$ in $Y_1 \setminus (U \cup D)$ which have not yet been used and all neighbours of $s_1$ in $X_1 \setminus (U \cup D)$ which have not yet been used, by conditions (ii) and (iii) and the regularity of the pair $(X_1,Y_1)$ we can find the vertices $q_1$ and $r_1$ as required except possibly when $i=0$. In this case we can still choose $q_1,r_1$ as required because we are allowed to take any $q_1 \in D_{x_1}$. Since $|D_{x_1}| = d_1^2 m'$ the regularity of the pair $(X_1,Y_1)$ still works to let us find $q_1$ and $r_1$.

An identical argument now works for first finding the vertices $p_2,s_2$ and then the vertices $q_2,r_2$. Here, the equivalent of condition (i) for $p_2$ is that it is a neighbour of $s_1$. Condition (iv) above is the one that lets us choose such a $p_2$ such that the equivalent of condition (ii) for $p_2$ holds. 

The only thing that remains to be checked to complete the argument is what happens with the choice of the last vertex $s_t = u_{i+1}$ of $Q_i$. In the case that $0 \le i \le r-1$ we want $u_{i+1}$ to be a neighbour $z_{i+1}$. We have already seen that the equivalent of condition (iii) for $s_t$ can be guaranteed and there is no need for an equivalent of condition (iv). Finally, if $i=r'$ then $s_t = u_{r'+1} = y_1$ which is already chosen. We can then pick $q_t$ and $r_t$ by allowing $r_t \in D_{y_1}$. The argument is analogous to the one we did in the case $i=0$ for $p_1 = w_0 = x_1$.

So the paths $Q_0,Q_1,\ldots,Q_{r'}$ can be constructed as required and hence so can the path $P_1$. Construction of other paths $P_i$ for $i>1$ again uses the auxiliary graph $R^*$ in the same manner. Recall that for $i > 1$ we want a path $P_i$ from $x_i$ to $y_i$. For its construction, we want to satisfy the following properties:
\begin{itemize}
\item[(C1)] $P_i$ is disjoint from $P_1,\ldots,P_{i-1}$;
\item[(C2)] for each $1 \leqslant j \leqslant \k/2$ we have that $|V(P_i) \cap A_j| = |V(P_i) \cap B_j|$;
\item[(C3)] for each $1 \leqslant j \leqslant \k/2$, we have that $|V(P_i) \cap A_j|, |V(P_i) \cap B_j| \leqslant 2\sqrt{\gamma_1} \m$;
\item[(C4)] for each $1 \leqslant j
\leqslant \k/2$ we have that $|V(P_i) \cap A_j^*| = |V(P_i) \cap B_j^*| = 0$;
\item[(C5)] $P_i$ does not meet any vertex of $D$ with the only possible exception being the neighbours of $x_i$ and $y_i$;
\item[(C6)] $P_i$ has length at most $\gamma_1^{-1/3}$. 
\end{itemize}
These properties are completely analogous to properties (B1)--(B6) we demanded for the paths $Q_j$. (Note that it is not necessary to demand that $P_i$ is disjoint from $V_0'$ as $V_0' \subseteq V(P_1)$.) So the construction of $P_i$ is completely analogous to the construction of the paths $Q_j$ with the only difference being that both the first vertex $x_i$ and the last vertex $y_i$ are fixed. By choosing the neighbour of $x_i$ in $D_{x_i}$ in a similar way as we have chosen the neighbour of $x_1$ in $Q_0$, and by choosing the neighbour of $y_i$ in $D_{y_i}$ in a similar way as we have chosen the neighbour of $y_1$ in $Q_{r'+1}$, we can run the same argument exactly as we did for the paths $Q_j$, unless possibly if $P_i$ must have length 3. In that case we must achieve that the neighbours of $x_i$ and $y_i$ are adjacent in $P_i$. This can be guaranteed as $D_{x_i}$ and $D_{y_i}$ are subsets of a regular pair and their sizes are big enough to guarantee the existence of an edge between them.
\end{proof}


\begin{proof}[Sketch of the proof of Theorem~\ref{thm:RobustOKbip}]
Let $A\dcup B$ be the partition given by Lemma~\ref{lem:almostBipBalanced}. By passing to the subgraph $G[A,B]$ we can assume that the input graph $G$ is bipartite. Remark~\ref{rem:bipartiteISvt} guarantees that this modified graph is still vertex-transitive and Lemma~\ref{lem:almostBipBalanced} guarantees that it has high iron connectivity.

The proof works very similar to the proof of Theorem~\ref{thm:RobustOKbipfar}. We just draw attention to three small differences:

First, the Regularity Lemma must be applied with prepartition $A\dcup B$. Let $\mathcal A$ and $\mathcal B$ be the clusters inside $A$, and $B$, respectively. 


Second, when finding good partners $u_i$ and $w_i$ for exceptional vertex $z_i$, we require that
\begin{equation}\label{eq:reallyAlt}
\mbox{$u_i,v_i\in B$ if $z_i\in A$ and $u_i,v_i\in A$ if $z_i\in B$.} 
\end{equation}

Last, Claim~\ref{AC:R^* highly connected} need not hold in the bipartite
setting. Indeed, typically clusters in $\mathcal A$ form one component and
clusters inside $\mathcal B$ form another component of the auxiliary digraph $R^*$. It can be proven (using the
same methods) that both graphs $R^*[\mathcal A]$ and $R^*[\mathcal B]$ have high
strong connectivity. This is sufficient in the bipartite case. The key for the entire
embedding working is that~\eqref{eq:BIrequirement},~\eqref{eq:reallyAlt} and the
fact that all edges of $M$ cross between $\mathcal{A}$ and $\mathcal{B}$ guarantee that all the paths will automatically occupy the same number of vertices in $A$ as in $B$.
\end{proof}


\section{Proof of Theorem~\ref{thm:main}}\label{sec:proof}

\def\GI{\mathrm{T\ref{lem:goodIslands}}}
\def\NOTBIOK{\mathrm{T\ref{thm:RobustOKbipfar}}}
\def\BIOK{\mathrm{T\ref{thm:RobustOKbip}}}

We first set up constants. Let $\beta_\GI$, $R_\GI$, and $N_0$ be given by Theorem~\ref{lem:goodIslands} for input parameter $\alpha$. Let $N_1$ be given by Theorem~\ref{thm:RobustOKbipfar} for input parameters $\beta_\NOTBIOK=\beta_\GI^4$, $C_\NOTBIOK=R_\GI$, and $\gamma_\NOTBIOK=\frac1{10R_\GI}$. Let $N_2$ be given by Theorem~\ref{thm:RobustOKbip} for input parameters $c_\BIOK=\min\{\beta_\NOTBIOK,\frac{1}{18}\}$ and $C_\BIOK=4R_\GI$. Let
\[
n_0=\max\{N_0,100R_\GI^3,10R_\GI N_1,10R_\GI N_2\}\;.
\]
Suppose now we are in the setting of the theorem.

\smallskip

Consider a partition $V_1\dcup \ldots\dcup V_r$ of $V(G)$ given by
Theorem~\ref{lem:goodIslands}. We have $r<R_\GI$. We call the sets $V_1,\ldots,V_r$
\emph{continents}. If $r=1$ then the existence of a Hamilton cycle follows.
Indeed, consider first the case when $G$ is $c_\BIOK^4$-far from bipartiteness. Let $U_1\subset V(G)$ be the exceptional set given by
Theorem~\ref{thm:RobustOKbipfar}. There exist an edge $xy\in E(G-U_1)$. Using
1-pathitionability of $G$ there exists a Hamilton path from $x$ to $y$. This path
together with the edge $xy$ forms a Hamilton cycle. If on the other hand $G$ is
$c^4_\BIOK$-close to bipartiteness, then an analogous construction using
Theorem~\ref{thm:RobustOKbip} instead of Theorem~\ref{thm:RobustOKbipfar} works.

It remains to consider the case $r>1$. Let $m=|V_1|$. The proof now splits into two cases. The first case deals with the situation
when the graphs $G[V_i]$ are far from bipartiteness. The second case deals with
the setting when the graphs $G[V_i]$ are close to bipartiteness.\footnote{Recall that by Theorem~\ref{lem:goodIslands}, all the graphs $G[V_i]$ have the same distance from being bipartite.}
In both cases one needs to glue paths of the graphs $G[V_i]$ (these paths are guaranteed by pathitionability and bipathitionability, respectively) into one Hamilton cycle.

\smallskip

\noindent\underline{Case~I: \textsl{All the graphs $G[V_i]$ are $c_\BIOK^4$-far from
bipartiteness.}}\\
We write $k=\frac2n\sum_{1\le i<j\le
r}e(V_i,V_j)$. By the symmetry of our partition, each vertex sends exactly $k$
edges outside its own continent. A pair $V_iV_j$ is \emph{fat}
if there exists a matching of size at least $\frac{m}r$ in $G[V_i,V_j]$. If
$e(V_i,V_j)>0$ but $V_iV_j$ is not fat then we say that $V_iV_j$ is \emph{thin}.
Let $k'$ be the number of edges any vertex $v$ sends into thin pairs. By
vertex-transitivity, $k'$ does not depend on the choice of $v$.

\begin{AuxiliaryMain}\label{AC:DenThin}
We have $e(V_i,V_j)<\frac{k'm}r$ for each thin pair $V_iV_j$.
\end{AuxiliaryMain}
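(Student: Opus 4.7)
My plan is to derive the bound from König's theorem applied to the bipartite graph $G[V_i,V_j]$, combined with the elementary observation that no vertex can send more than $k'$ edges into a single thin partner (since $k'$ already counts the total number of edges sent into \emph{all} thin partners).

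More concretely, since $V_iV_j$ is thin, by definition the maximum matching of the bipartite graph $G[V_i,V_j]$ has size strictly less than $m/r$. By König's theorem there is a vertex cover $X \subseteq V_i \cup V_j$ with $|X| < m/r$. Writing $X = X_i \dcup X_j$ with $X_i \subseteq V_i$ and $X_j \subseteq V_j$, every edge of $G[V_i,V_j]$ has an endpoint in $X_i \cup X_j$, so
\[
e(V_i,V_j) \;\leq\; \sum_{v \in X_i} \deg(v, V_j) \;+\; \sum_{v \in X_j} \deg(v, V_i).
\]

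Next I would use vertex-transitivity together with the definition of $k'$. For any $v \in V_i$, the number of edges from $v$ to the union of all thin partners of $V_i$ is exactly $k'$; in particular $\deg(v,V_j) \leq k'$ whenever $V_iV_j$ is thin. The same applies symmetrically to vertices in $V_j$. Substituting these bounds gives
\[
e(V_i,V_j) \;\leq\; |X_i|\,k' + |X_j|\,k' \;=\; |X|\,k' \;<\; \frac{k'm}{r},
\]
which is exactly the claim. I do not anticipate any genuine obstacle: the only subtlety is to make sure the bound $\deg(v,V_j) \leq k'$ is read correctly from the definition of $k'$ (which sums over all thin pairs containing the continent of $v$, of which $V_iV_j$ is one), and that König's theorem is available since the pair $G[V_i,V_j]$ is bipartite.
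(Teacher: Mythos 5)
Your proof is correct and uses the same two ingredients as the paper — König's theorem for $G[V_i,V_j]$ and the bound $\Delta_G(V_i,V_j)\le k'$ coming from the definition of $k'$ — only phrased as a direct argument (start from a small vertex cover) instead of the paper's contrapositive (assume $e(V_i,V_j)\ge k'm/r$ and deduce the pair is fat). The two are essentially identical.
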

\begin{proof}[Proof of Claim~\ref{AC:DenThin}]
Suppose that
\begin{equation}\label{eq:ViVjdense}
e(V_i,V_j)\ge \frac{k'm}r\;.
\end{equation}
We claim that $V_iV_j$ is fat. To this end it suffices by K\"onig's Matching
Theorem to show that there is no vertex cover of $G[V_i,V_j]$ of size less than
$\frac mr$. This is in turn implied by~\eqref{eq:ViVjdense} and by the fact that
$\Delta_G(V_i,V_j)\le k'$.
\end{proof}

\begin{AuxiliaryMain}\label{AC:noThin}
There does not exist any thin pair.
\end{AuxiliaryMain}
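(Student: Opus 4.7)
The plan is to rule out any thin pair by a short double-counting argument that plays the upper bound from Claim~\ref{AC:DenThin} against the trivial upper bound on the total number of pairs of continents.

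First I would use vertex-transitivity to turn the local quantity $k'$ into a global count of thin-pair edges. Since $k'$ is, by definition, the (vertex-independent) number of edges any given vertex sends into thin pairs, summing $k'$ over all $n$ vertices and dividing by $2$ yields
\[
\sum_{V_iV_j\text{ thin}} e(V_i,V_j)\;=\;\tfrac{nk'}{2}.
\]

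Next, let $t$ denote the number of thin pairs. Applying Claim~\ref{AC:DenThin} to each of them and summing gives $\tfrac{nk'}{2}<t\cdot\tfrac{k'm}{r}$. If we suppose for contradiction that at least one thin pair exists, then in particular $k'\ge 1$, so we may cancel $k'$ from both sides; substituting $n=rm$ we obtain $t>r^2/2$. But the total number of unordered pairs among the $r$ continents is only $\binom{r}{2}<r^2/2$, which contradicts $t>r^2/2$. Consequently $k'=0$, so no inter-continent edge lies in a thin pair, and the positive-edge requirement in the definition of `thin' forces the set of thin pairs to be empty.

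I do not anticipate any genuine obstacle here: the entire argument is a one-line double count, and the only ingredients invoked --- vertex-transitivity (to make $k'$ well defined and to convert it into a global edge count), Claim~\ref{AC:DenThin}, and the identity $n=rm$ --- are already in place. The one point worth a sentence of justification is the initial identity $\sum_{V_iV_j\text{ thin}}e(V_i,V_j)=nk'/2$, which is immediate from the definition of $k'$ together with the hand-shaking lemma.
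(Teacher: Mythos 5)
Your proof is correct and uses essentially the same double-counting idea as the paper: combine the vertex-transitive invariance of $k'$ with the bound from Claim~\ref{AC:DenThin}. The only cosmetic difference is that you count all thin-pair edges globally and compare against $\binom{r}{2}$, whereas the paper counts only the thin-pair edges incident to a single continent $V_1$ (getting $mk' \le (r-1)\frac{k'm}{r}$ directly), which sidesteps the need to introduce the parameter $t$; both give $k'=0$ at once.
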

\begin{proof}[Proof of Claim~\ref{AC:noThin}]
Let $K$ be the number of edges in thin pairs incident to $V_1$. We have $K=mk'$.
On the other hand, using Claim~\ref{AC:DenThin}, we have $K\le
(r-1)\frac{k'm}r$. Therefore, $mk'\le \frac{r-1}rmk'$, and consequently $k'=0$.
\end{proof}

We construct an auxiliary graph $H$ on the vertex set $\mathcal
V=\{V_1,\ldots,V_r\}$. The edges of $H$ are formed by fat pairs. From the fact
that $G$ is connected, and from Claim~\ref{AC:noThin} we get that $H$ is
connected. Let $T$ be a spanning tree of $H$. Rooting $T$ at its vertex $V_1$ we
get the notion of \emph{children}  of a continent $V_i$, and of
a \emph{parent} $\parent(V_i)$ of $V_i$ (the parent $\parent(V_i)$ is defined only when
$i\neq 1$).

Let $U_1\subset V_1,\ldots,U_r\subset V_r$ be the exceptional sets given by
Theorem~\ref{thm:RobustOKbipfar}. We have $|U_i|<\gamma_\NOTBIOK m$. Each graph $G[V_i]$ is $C_\NOTBIOK$-pathitionable with exceptional set $U_i$. For each fat pair
$V_iV_j$ let $M_{i,j}\subset G[V_i,V_j]$ be a matching of size at least $\frac
mr$.

\begin{AuxiliaryMain}\label{AC:MatchingEdges}
There exists a family $M$ consisting of two matching edges $x^-_{i,j}y^-_{i,j}, x^+_{i,j}y^+_{i,j}$ from each $M_{i,j}$ with $V_iV_j\in E(T)$ and $V_j=\parent(V_i)$ having the following properties:
\begin{itemize}
  \item $x_{i,j}^-,x_{i,j}^+\in V_i$ and $y_{i,j}^-,y_{i,j}^+\in V_j$ for any
  $V_iV_j\in E(T),V_j=\parent(V_i)$,
  \item $M$ is a matching in $G$, and
  \item $V(M)\cap\bigcup_{i=1}^r U_i=\emptyset$.
\end{itemize}
\end{AuxiliaryMain}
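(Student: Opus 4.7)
The plan is to construct $M$ greedily, processing the $r-1$ edges of the spanning tree $T$ one at a time. For each tree edge $V_iV_j \in E(T)$ with $V_j=\parent(V_i)$, I would pick two edges $x^-_{i,j}y^-_{i,j},x^+_{i,j}y^+_{i,j}$ of the matching $M_{i,j}$, insisting that their four endpoints avoid every exceptional set $U_k$ and every vertex already used by $M$ in earlier steps. The main (and essentially only) obstacle is to verify that such a choice is always available; once that is guaranteed, the matching structure of each $M_{i,j}$ automatically ensures that $M$ stays a matching in $G$.

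For this verification, fix the current tree edge $V_iV_j$. Since $M_{i,j}\subset G[V_i,V_j]$, the only exceptional sets capable of blocking its edges are $U_i$ and $U_j$, and together they forbid at most $|U_i|+|U_j|<2\gamma_\NOTBIOK m=m/(5R_\GI)$ edges of $M_{i,j}$. On the other hand, every previously processed tree edge contributed exactly two vertices to each of its two endpoint continents, and since $V_i$ and $V_j$ each have tree degree at most $r-1$, at any stage at most $2(r-1)$ vertices of $V_i$ and at most $2(r-1)$ of $V_j$ have been used by $M$. This rules out at most a further $4(r-1)<4R_\GI$ edges of $M_{i,j}$. Altogether, strictly fewer than $m/(5R_\GI)+4R_\GI$ edges of $M_{i,j}$ are unavailable.

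Since $|M_{i,j}|\ge m/r\ge m/R_\GI$ and the choice $n_0\ge 100R_\GI^3$ forces $m\ge n/R_\GI\ge 100R_\GI^2$, the unavailable edges constitute only a tiny fraction of $M_{i,j}$, leaving far more than two free edges from which to choose $x^-_{i,j}y^-_{i,j}$ and $x^+_{i,j}y^+_{i,j}$. Hence the greedy procedure goes through and produces the required family $M$. The key design choices here are the bound $r<R_\GI$ from Theorem~\ref{lem:goodIslands} and the small parameter $\gamma_\NOTBIOK=1/(10R_\GI)$ fed into Theorem~\ref{thm:RobustOKbipfar}, both calibrated precisely so that the exceptional sets $U_k$ remain negligible compared with the inter-continental matchings $M_{i,j}$.
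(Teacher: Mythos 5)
Your proposal is correct and takes essentially the same approach as the paper: a greedy selection of two edges from each $M_{i,j}$ (tree edge by tree edge), with the same counting that at most $2\gamma_\NOTBIOK m$ edges are blocked by $U_i\cup U_j$ and at most $4(r-1)$ by previously chosen edges, which is far less than $|M_{i,j}|\ge m/r$.
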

\begin{proof}[Proof of Claim~\ref{AC:MatchingEdges}]
The statement follows by greedily choosing two edges  from each matching
$M_{i,j}$ subject to restrictions above. Since the sets $U_i$ and $U_j$ each forbids at most
$\gamma_\NOTBIOK m$ edges of $M_{i,j}$, and the already chosen edges
$x^-_{i',j'}y^-_{i',j'}, x^+_{i',j'}y^+_{i',j'}$ (where $(i',j')\neq(i,j)$)
forbid at most $4(r-1)$ edges, and since we have $2\gamma_\NOTBIOK m+4(r-1)+2\le
|M_{i,j}|$, the choice of $x^-_{i,j}y^-_{i,j}$ and $x^+_{i,j}y^+_{i,j}$ is
possible.
\end{proof}
Given the family $M=\{x^-_{i,j}y^-_{i,j}, x^+_{i,j}y^+_{i,j}\subset
M_{i,j}\}_{V_iV_j\in E(T),V_j=\parent(V_i)}$ from Claim~\ref{AC:MatchingEdges}
we are now ready to construct the desired Hamilton cycle. The first step is to
decompose each continent $V_i$ into a system of paths $\mathcal S_i$. To
describe $\mathcal S_i$ we need to distinguish three cases based on the position
of $V_i$ in $T$.
\begin{itemize}
  \item \textsl{\underline{$V_i$ is the root of $T$ (i.e., $i=1$).}}\\
Let $V_{i_1}, \ldots, V_{i_p}$ be the children of $V_1$. As $p\le r\le C_\NOTBIOK$, we have that $G[V_i]$ is $p$-pathitionable with exceptional set $U_i$. Define $V_{i_{p+1}}=V_{i_1}$. Let $\mathcal S_1$ be a decomposition of $V_1$ into $p$ paths such that the $j$-th path begins in $y^+_{i_{j},1}$ and ends in
$y^-_{i_{j+1},1}$. Such a system of paths exists thanks to the $p$-pathitionability
of $G[V_1]$.
\item \textsl{\underline{$V_i$ is a leaf of $T$, and $i\neq1$.}}\\
Let $V_{i'}$ be the parent of $V_i$. Let $\mathcal S_i$ consist of a
(single) Hamilton path starting in $x_{i,i'}^-$ and ending in $x_{i,i'}^+$. Such
a path exists thanks to the 1-pathitionability
of $G[V_i]$.
\item \textsl{\underline{$V_i$ is an internal vertex of $T$, and $i\neq1$.}}\\
Let $V_{i'}$ be the parent of $V_i$. Let $V_{i_1}, \ldots, V_{i_q}$ be the
children of $V_1$. As $q< r\le C_\NOTBIOK$, we have that $G[V_i]$ is
$(q+1)$-pathitionable with exceptional set $U_i$. Then let $\mathcal S_i$ consist of $q+1$ paths $P_0,P_1,\ldots,P_q$ which decompose $V_i$. We require that $P_0$ has endvertices $x_{i,i'}^+$ and $y_{i_1,i}^+$. The endvertices of the path $P_j$
($j\in[q-1]$) are required to be $y_{i_j,i}^-$ and $y^+_{i_{j+1},i}$. Last, the endvertices of the path $P_q$
are required to be $y_{i_q,i}^-$ and $x^-_{i,i'}$. Such a system of path exists thanks to the $(q+1)$-pathitionability
of $G[V_i]$.   
\end{itemize}
It can be easily checked that $M$ together with the system
$\{\mathcal S_i\}_{i=1}^r$ forms a Hamilton cycle in $G$. See Figure~\ref{fig:concatenating} for an example. 
\begin{figure}[t]
     \centering
     \subfigure[An example of a partition of $G$ into continents $V_1\dcup \ldots\dcup V_4$ together with tree $T$ (depicted in grey), and edges $x^-_{i,j}y^-_{i,j}$, $x^+_{i,j}y^+_{i,j}$.]{
          \label{fig:step1}
          \includegraphics[width=.45\textwidth]{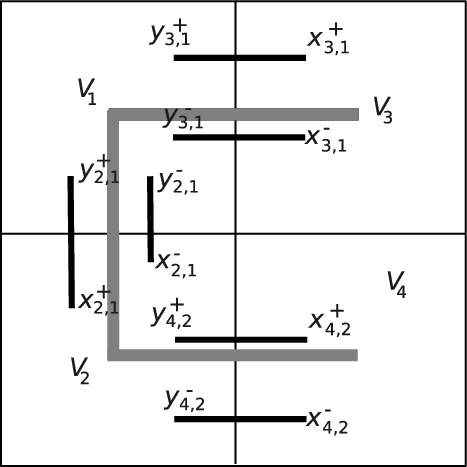}}
     \hspace{.07in}
     \subfigure[The final Hamilton cycle. The systems $\mathcal S_i$ are depicted by dotted lines.]{
          \label{fig:step2}
          \includegraphics[width=.45\textwidth]{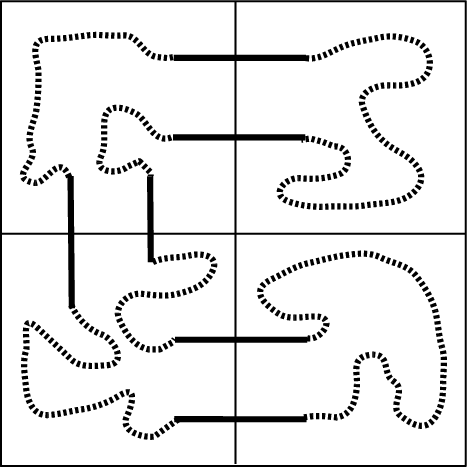}}
     \hspace{.07in}
     \caption{Gluing together the paths $\mathcal S_i$ and $M$.}
     \label{fig:concatenating}
\end{figure}

\bigskip
\noindent\underline{Case~II: \textsl{All the graphs $G[V_i]$ are $c_\BIOK^4$-close to
bipartiteness.}}\\
Let $A_i\dcup B_i$ be the partition of each graph $G[V_i]$ given by Lemma~\ref{lem:almostBipBalanced} with input constant $c_\BIOK$. Let $\mathcal W=\{A_1,B_1,A_2,B_2,\ldots,A_r,B_r\}$. Elements of $\mathcal W$ are called \emph{bicontinents}. A pair $XY$ of elements of $\mathcal W$ is said to be \emph{bifat} if $G[X,Y]$ contains a matching of size at least $\frac{m}{2r}$. If $e(X,Y)>0$ but $XY$ is not bifat then we call the pair $XY$ \emph{bithin}.
\begin{AuxiliaryMain}\label{AC:noBithin}
There does not exist a bithin pair. 
\end{AuxiliaryMain}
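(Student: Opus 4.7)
The plan is to mimic the proof of Claim~\ref{AC:noThin} at the bicontinent level. I begin by defining $k''$ to be the number of edges that an arbitrary vertex of $G$ sends into bithin pairs. To see that this is well-defined, I would first observe that every $g\in\Aut(G)$ permutes the bicontinents in $\mathcal W$: by Lemma~\ref{lem:almostBipBalanced} applied to each $G[V_i]$, the balanced bipartition of $V_i$ is essentially unique up to swap, so if $g\in\Aut(G)$ sends a vertex of $A_i$ into $V_j$, then $g$ must send $\{A_i,B_i\}$ onto $\{A_j,B_j\}$. Since bithin-ness of a pair is invariant under such a permutation, vertex-transitivity of $G$ yields that $k''$ does not depend on the choice of vertex.

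I would then establish the direct analog of Claim~\ref{AC:DenThin}: for every bithin pair $XY\in\binom{\mathcal W}{2}$, one has $e(X,Y)<\frac{mk''}{2r}$. This follows from K\"onig's theorem together with the observation that any vertex in a minimum vertex cover of $G[X,Y]$ sends at most $k''$ edges into $Y$.

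Before turning to the global count, I would verify that the intra-continent pair $A_iB_i$ is bifat for every $i$. Were $A_iB_i$ bithin, a vertex cover of $G[A_i,B_i]$ of size less than $\frac{m}{2r}$, combined with the within-part degree bound $6c_\BIOK^2 m$ supplied by Lemma~\ref{lem:almostBipBalanced}, would disconnect $G[V_i]$ by the removal of few vertices together with a small-maximum-degree set of edges, contradicting the iron-connectivity of $G[V_i]$ inherited from Theorem~\ref{lem:goodIslands}.

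Finally, I would double-count bithin-pair edges incident to $V_1$. Summing the per-vertex quantity $k''$ over the $m$ vertices of $V_1$ gives $mk''$, while, since $A_1B_1$ is bifat, every bithin pair touching $V_1$ is of the form $XY$ with $X\in\{A_1,B_1\}$ and $Y\in\mathcal W\setminus\{A_1,B_1\}$. The hope is to obtain an upper bound strictly smaller than $mk''$, forcing $k''=0$ and completing the proof. The main obstacle I anticipate is that the naive count produces only $mk''\le 4(r-1)\cdot\frac{mk''}{2r}=\frac{2(r-1)}{r}mk''$, which yields a contradiction for $r=2$ but is merely consistent for $r\ge 3$. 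To close the count for larger $r$, I would sharpen it by exploiting that the set of bithin pairs is $\Aut(G)$-invariant, so the $4(r-1)$ potentially bithin pairs fall into symmetric orbits under the stabiliser of $V_1$ in $\Aut(G)$. Combined with the fact that every continent pair $V_iV_j$ with $e(V_i,V_j)>0$ has a matching of size at least $\frac{m}{r}$ in $G[V_i,V_j]$ (by applying verbatim the argument of Claim~\ref{AC:noThin}, which never used the assumption of Case~I), this should restrict how many of the four bicontinent sub-pairs in $V_iV_j$ can be simultaneously bithin and deliver the required contradiction.
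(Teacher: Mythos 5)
Your setup is the right one and you have correctly spotted the real issue: the literal mutatis mutandis count does not close. With bifat meaning ``matching of size at least $m/(2r)$'', the bound for a bithin pair $XY$ is $e(X,Y)<k''m/(2r)$, and counting bithin-pair edges incident to $V_1$ gives $mk''\le 4(r-1)\cdot k''m/(2r)=2(r-1)mk''/r$, i.e.\ $r\le 2(r-1)$, which is no contradiction once $r\ge 3$. The paper's one-line ``translates mutatis mutandis'' glosses over exactly this, so you deserve credit for catching it. However, the fix you sketch in the last paragraph does not work. Continent-level fatness gives a matching of size $m/r$ in $G[V_i,V_j]$, but that matching splits over the four sub-pairs $A_iA_j,A_iB_j,B_iA_j,B_iB_j$, each of which could carry roughly $m/(4r)$ of it; so all four could simultaneously be bithin (w.r.t.\ the $m/(2r)$ threshold), and fatness imposes no further restriction. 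The appeal to orbits of the stabiliser is equally non-binding: it gives equalities among the quantities $e(A_1,Y)$ over an orbit but no extra inequality. So the paragraph ends with a hope, not an argument. The clean repair is simply to lower the bifat threshold, say to $m/(4r)$: this is still far larger than what is needed later (we only need each bifat pair to supply two matching edges avoiding the sets $U_i$, of total size $O(\gamma m)$, and a bounded number of previously chosen edges), and with this threshold the same count reads $(m/2)k'' < 2(r-1)\cdot k''m/(4r)=(r-1)k''m/(2r)$, i.e.\ $1<(r-1)/r$, which is false for every $r\ge 1$, forcing $k''=0$.

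Two smaller points. First, your justification that $A_iB_i$ is bifat via iron connectivity is on shaky ground: the vertex cover you want to delete has size up to $m/(2r)$, but $G[V_i]$ is only known to be $c_\BIOK m$-iron with $c_\BIOK<1/17$, which can be far smaller than $m/(2r)$ for small $r$. The direct route is Remark~\ref{rem:bipartiteISvt}: $G[A_i,B_i]$ is itself vertex-transitive, hence regular of some degree $d>0$, hence (as a positive-degree regular balanced bipartite graph) has a perfect matching of size $m/2\ge m/(2r)$. Second, your argument that $\Aut(G)$ permutes $\mathcal W$ (so that $k''$ is well defined) is essentially right but needs one extra step: Lemma~\ref{lem:almostBipBalanced} only controls automorphisms of $G[V_i]$, not arbitrary isomorphisms $G[V_i]\to G[V_j]$, so one should first argue (via iron connectivity, considering the partition $(A\cap A')\cup(B\cap B')$ versus its complement) that the bipartition of Lemma~\ref{lem:almostBipBalanced} is unique up to swap; then any $g\in\Aut(G)$ sending $V_i$ to $V_j$ must map $\{A_i,B_i\}$ onto $\{A_j,B_j\}$.
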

\begin{proof}[Proof of Claim~\ref{AC:noBithin}]
The proof translates mutatis mutandis from the proof of Claim~\ref{AC:noThin}.
\end{proof}
Let $H$ be a graph on the vertex set $\mathcal W$, where a pair $XY$ forms an edge of $H$ if $XY$ is bifat. Observe that $A_iB_i\in E(H)$ for every $i\in [r]$. In particular, since $G$ is connected, $H$ is connected as well.

As in Case~I we can find matching $M_{XY}=M_{YX}$ for each $XY\in E(H)$ with the following properties:
\begin{itemize}
\item $M_{XY}\subset G[X,Y]$, $|M_{XY}|=2$,
\item $M=\bigcup_{XY\in E(H)}M_{XY}$ is a matching in $G$, and
\item $V(M)\cap\bigcup_{i=1}^r U_i=\emptyset$.
\end{itemize}
As it will turn out the role of the edges in matchings $M_{A_iB_i}$ is somewhat inferior: they are just used to guarantee connectivity of $H$, and -- unlike other matchings $M_{XY}$ -- they are not guaranteed to lie on the resulting Hamilton cycle. Therefore, we write $M'=M\setminus \bigcup_{i=1}^rM_{A_iB_i}$.

Let $H'$ be a clone of $H$ with each original edge of $H$ replaced by two parallel edges. Since $H'$ is connected and all its degrees are even we can find an Eulerian circuit $\mathcal E$ in $H'$. Also, observe that $H'$ is vertex-transitive, and in particular, we have 
\begin{equation}\label{eq:degAB}
\deg_{H'}(A_i)=\deg_{H'}(B_i)\;,
\end{equation}
for any $i\in [r]$.

The aim is to use $\mathcal E$ to find a Hamilton cycle in $G$. To this end we find requirements for systems of paths $\mathcal S_i$ within each graph $G[V_i]$.

We identify (in a natural way) edges of~$H'$ with edges in~$M$. Therefore, $\mathcal E$ may be viewed as moving between bicontinents. During each (say, $j$-th) visit of $X\in\mathcal W$ we remember vertex $a_{X,j}\in V(M)\cap X$ which was used to enter $X$, and vertex $b_{X,j}\in V(M)\cap X$ which was used to leave $X$. We view $\mathcal E$ cyclically. In other words, for the starting bicontinent $Y$ of the circuit $\mathcal E$ the vertex $b_{Y,1}$ is the vertex coming from the first matching edge along $\mathcal E$ while $a_{Y,1}$ coming from the very last step in $\mathcal E$. 

Let $C_X$ be the number of times bicontinent $X$ was visited. We have $C_X<2r$. Observe also that by~\eqref{eq:degAB} we have $C_{A_i}=C_{B_i}$ for each $i\in[r]$. Therefore, by $4r$-bipathitionability of $G[V_i]$ there exist for each $i\in [r]$ a system of $\mathcal S_i$ of $C_{A_i}+C_{B_i}$ paths decomposing $V_i$ such that:
\begin{itemize}
\item The $j$-th path (for $j\in [C_{A_i}]$) starts in vertex $a_{A_i,j}$ and ends in $b_{A_i,j}$.
\item The $(j+C_{A_i})$-th path (for $j\in [C_{B_i}]$) starts in vertex $a_{B_i,j}$ and ends in $b_{B_i,j}$.
\end{itemize}

It can be easily verified that the system $\{\mathcal S_i\}$ together with the matching $M'$ forms a Hamilton cycle in~$G$. 

\section{Algorithmic aspects}\label{sec:algorithmic}

As said in the Introduction, the problem of deciding whether a graph is
Hamiltonian is NP-hard. Even when the hamiltonicity of a graph $G$
is guaranteed, finding a Hamilton cycle in $G$ cannot be done in polynomial time
unless P=NP. Yet in many situation there is an efficient algorithm for
finding a Hamilton cycle in graphs satisfying certain conditions. See for
example~\cite{BondyChvatal,Sarkozy,CKKO}.

In this short section we note that the tools we use to prove Theorem~\ref{thm:main} can be turned into an efficient algorithm for finding a Hamilton cycle in dense vertex-transitive graphs.

\begin{theorem}\label{thm:algo}
For every $\alpha > 0$ there is an $n_0$ such that every connected vertex-transitive graph on $n \geqslant n_0$ vertices and valency at least $\alpha n$
contains a Hamilton cycle. Moreover there is a polynomial time algorithm for finding a Hamilton cycle in such a graph.
\end{theorem}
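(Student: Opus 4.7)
The plan is to verify that every structural step used in the existence proof of Theorem~\ref{thm:main} admits a polynomial-time implementation; since the proof is essentially constructive once one adopts algorithmic versions of \Szemeredi's Regularity Lemma (Alon, Duke, Lefmann, R\"odl and Yuster) and of the Blow-up Lemma (Koml\'os, S\'ark\"ozy and Szemer\'edi, later refined), the theorem then follows by just tracing through Sections~\ref{sec:connectivity}--\ref{sec:proof} and keeping book on the running time. All the constants involved depend only on~$\alpha$, so the constants from the regularity/blow-up phase are absolute and the resulting algorithm really is polynomial in~$n$.

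The first phase is to produce the decomposition of Theorem~\ref{lem:goodIslands}. For a fixed~$\ell$, the relation $\sim_{(\ell)}$ can be decided between any two vertices by a min-cut computation: $u\sim_{(\ell)}v$ is equivalent to the non-existence of a bipartition separating~$u$ from~$v$ whose cut has maximum degree at most~$\ell$, which is a bounded-degree edge-cut problem solvable by standard network flow techniques (in our dense regime one may alternatively iteratively merge components using part~\ref{it:RA2} of Lemma~\ref{lem:robustlyadjacent}). This gives the $\ell$-islands in polynomial time, hence implements Lemma~\ref{lem:IFnot} and, by iteration, Lemma~\ref{lem:goodIslands-robust}; Lemma~\ref{lem:RobustIron} is a proof of existence but for algorithmic purposes we simply read off the islands of the resulting graph. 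If the resulting vertex-transitive piece $G'$ is $c^4_\BIOK$-close to bipartiteness, we need the bipartition supplied by Lemma~\ref{lem:almostBipBalanced}; here Remark~\ref{rem:17} is crucial, as it reduces the task to finding any bipartition satisfying the two stated local conditions. Such a bipartition can be produced by the usual greedy local-search: start from any bipartition with $e(A)+e(B)<c^4n^2$ (obtained by rounding a solution to the Max-Cut LP, or by a standard approximation) and iteratively move a vertex to the other side whenever it has strictly more neighbours on its own side; this monotonically increases $e(A,B)$, so terminates in at most $\binom{n}{2}$ steps, and at termination both conditions of Remark~\ref{rem:17} hold.

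The second phase implements Theorems~\ref{thm:RobustOKbipfar} and~\ref{thm:RobustOKbip} algorithmically. We apply the constructive Regularity Lemma to obtain the clusters, the reduced graph and the subgraph $G'$; the passage to two threshold densities used in Remark~\ref{rem:2densities} is immediate. The half-integral maximum fractional matching in $R_1$ guaranteed by Claim~\ref{AC:RBigM} is the optimum of a polynomial-size linear program and can be computed in polynomial time, and Lemma~\ref{lem:matchingBlowUp} is itself an explicit construction. Lemma~\ref{lem:Super-regular} is algorithmic by an averaging argument, and the proof of Lemma~\ref{lem:ideal} presented in the text is already constructive (indeed, the reason we replaced the original probabilistic proof by the greedy one above was precisely for this section). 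The remaining geometric work of the proof of Theorem~\ref{thm:RobustOKbipfar}---choosing the partners $u_i,v_i$ for exceptional vertices, building the auxiliary digraph $R^*$, finding short directed paths in $R^*$ avoiding a forbidden set of clusters, and converting these into paths in $G$---is a straightforward greedy/BFS procedure. The one place where existence is proved non-algorithmically is the completion step via Lemma~\ref{lem:blow-up}: here we invoke the algorithmic Blow-up Lemma of Koml\'os, S\'ark\"ozy and Szemer\'edi (with prescribed endvertex images obtained by the standard restricted-position trick recalled in Remark~13 of~\cite{KSS_bl}), which runs in polynomial time on dense super-regular pairs.

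The third and final phase is the gluing of Section~\ref{sec:proof}. Given the $\ell$-(bi)pathitions produced by the second phase, everything in Section~\ref{sec:proof}---computing the matchings $M_{i,j}$ in fat pairs (bipartite matching), extracting a spanning tree $T$ of the auxiliary graph $H$ (or an Eulerian circuit of $H'$ in Case~II), and concatenating the path systems $\mathcal S_i$ with the matching edges---is purely combinatorial and obviously polynomial. Collecting everything, we obtain an algorithm whose running time is dominated by the constructive Regularity Lemma together with the constructive Blow-up Lemma, and in particular is polynomial in~$n$. The main obstacle in writing out the algorithm cleanly is verifying that the prescribed-endpoint variant of the Blow-up Lemma used via Lemma~\ref{lem:blow-up} is still covered by the known algorithmic version; this has been addressed in the literature, and we may cite the relevant references rather than reproving it.
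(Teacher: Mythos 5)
Your overall plan---trace the existence proof and replace each tool by its algorithmic counterpart---is the same as the paper's, and most of your phases (algorithmic Regularity Lemma, polynomial LP for the fractional matching, constructive Lemma~\ref{lem:ideal} and \ref{lem:Super-regular}, algorithmic Blow-up Lemma, greedy choice of partners and of paths in $R^*$, Remark~\ref{rem:17} plus local search for the bipartition, and the bookkeeping of the gluing step) match what the paper does. However, there is a genuine gap at the very first step, and it is the one place where the paper has to do something non-obvious.

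You claim that the relation $\sim_{(\ell)}$ can be decided in polynomial time because it is ``a bounded-degree edge-cut problem solvable by standard network flow techniques.'' This is unsupported and, as far as I can tell, incorrect: max-flow/min-cut controls the \emph{size} of a separating edge set, not its \emph{maximum degree}, and the feasibility problem ``is there a bipartition separating $u$ from $v$ whose crossing edges have maximum degree $\le\ell$'' is not a standard flow problem. The paper explicitly flags this: ``the obvious algorithm for testing robust connectivity requires exponentially many steps,'' and then circumvents the issue by a different device. Concretely, the paper shows that some valid continent partition is refined by the components of the $(19\alpha^2 n/20)$-codeg graph $F$ of $G$; since $F$ has at most $20/\alpha^2$ components (by Lemma~\ref{lem:robustlyadjacent}\ref{it:N1}), one enumerates all ways of grouping these components into a partition---a bounded number depending only on $\alpha$---and runs the remaining phases on all candidate partitions concurrently, succeeding on at least one. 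Your alternative suggestion (``iteratively merge components using Lemma~\ref{lem:robustlyadjacent}\ref{it:RA2}'') is closer in spirit but is not fleshed out: part~\ref{it:RA2} lets you certify membership in an island once you already have a large chunk of it, but it does not tell you how to seed the process or when to stop, which is exactly what the codeg-graph construction supplies. Without this ingredient the argument as written does not yield a polynomial-time algorithm.

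As a smaller point, you do not address the subtlety (which the paper does) that one cannot decide in polynomial time whether $G[V_i]$ is exactly $c^4$-close to bipartiteness; the paper resolves this with a two-threshold scheme ($c'<c^4$) driven by the reduced graph, and either branch is acceptable. Your max-cut-PTAS-on-dense-graphs route would likely serve the same purpose, but you should make the two-sidedness of the test explicit rather than assume a clean yes/no oracle.
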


Recall the main steps of the proof of Theorem~\ref{thm:main}: 
\begin{enumerate}
\item[(A)] By Theorem~\ref{lem:goodIslands}, the input graph $G$ is partitioned into the continents $V_1\dcup \ldots\dcup V_r$.
\item[(B)] It is checked whether the graphs $G[V_i]$ are close to bipartiteness or not. In the first case, partitions satisfying the conclusion of Lemma~\ref{lem:almostBipBalanced} are found.
\item[(C)] For each $G[V_i]$, an exceptional set $U_i$ is found so that the consequence of Theorem~\ref{thm:RobustOKbipfar} or Theorem~\ref{thm:RobustOKbip} is satisfied. (Depending on whether $G[V_i]$ is far from bipartiteness or not.)
\item[(D)] A way to connect certain systems of paths into one Hamilton cycle in $G$ is devised. (In \textsl{Case~I and Case~II in the proof of Theorem~\ref{thm:main} in the non-bipartite and the bipartite case, respectively.})
\item[(E)] A system of paths (with prescribed end-vertices) is found in the
graphs $G[V_i]$.  (In \textsl{Theorem~\ref{thm:RobustOKbipfar} and Theorem~\ref{thm:RobustOKbip} in the non-bipartite and the bipartite case, respectively}.)
\item[(F)] A Hamilton cycle is found in $G$. (In \textsl{the final part of the
proof of Theorem~\ref{thm:main}}.)
\end{enumerate}

We now discuss the algorithmic versions of the steps above, thus providing a
proof of Theorem~\ref{thm:algo}. 

For step~(A) observe that in the proof of Theorem~\ref{lem:goodIslands} it was
crucial to be able to tell whether a graph is robustly connected. However, the
obvious algorithm for testing robust connectivity requires exponentially many
steps. We can overcome this obstacle with the help of codeg-graphs. We claim
that there is a partition $V_1\dcup \ldots\dcup V_r$ satisfying the conclusion of
Theorem~\ref{lem:goodIslands} and moreover each $V_i$ is a union of components
of the $(19\alpha^2 n/20)$-codeg graph $F$ of $G$. To see this consider the construction of the partition $V_1\dcup\ldots\dcup V_r$ as given by Lemma~\ref{lem:goodIslands-robust}. Using the notation of the proof of Lemma~\ref{lem:goodIslands-robust}, at step $i$, if $G_i$ is not $(\alpha_i^4 n_i/40)$-robust, then we partition $G_i$ into its $(\alpha_i^4 n_i/40)$-islands. By Lemma~\ref{lem:robustlyadjacent}\ref{it:RA2}, every vertex has at most $r_i \alpha_i^4 n_i/40 \leqslant \alpha_i^2 n_i/20$ neighbours outside its island. Therefore, every vertex will have at most
\[
\sum_{i=0}^{\infty} \frac{\alpha_i^2 n_i}{20} = \frac{\alpha^2 }{20} \sum_{i=0}^{\infty} \left( \frac{16}{9} \right)^{i} n_i \leqslant \frac{\alpha^2 n}{20} \sum_{i=0}^{\infty} \left( \frac{8}{9} \right)^{i} = \frac{9 \alpha^2 n}{20}
\]
neighbours outside its continent. In particular, any two vertices which are
neighbours in the $(19\alpha^2 n/20)$-codeg
graph $F$ must belong to the same
continent. There is an efficient way to construct $F$  and moreover by
Lemma~\ref{lem:robustlyadjacent}\ref{it:N1} every component of $F$ has minimum
degree at least $\alpha^2 n/20$ and so $F$ has at most $20/\alpha^2$ components.
In particular, we can construct a bounded number of partitions (depending only
on $\alpha$ and not on $n$) of the vertex set of $G$ by grouping the components
of $F$ in all possible ways. At least one of these partitions satisfies the
conclusion of Theorem~\ref{lem:goodIslands}. From now on the algorithm will work on all these possible partitions concurrently. We will show that for the partition that satisfies the conclusion of Theorem~\ref{lem:goodIslands} it will only take polynomially many steps to construct a Hamilton cycle.
Note that it might happen that some of the partitions do not satisfy the
conclusion of Theorem~\ref{lem:goodIslands}; the algorithm is not required to
produce a Hamilton cycle for these partitions as we only have to produce one
Hamilton cycle.

For step~(B), given a $cn$-iron vertex-transitive graph $G$ we would
like to decide in polynomial time whether it is $c^4$-close to bipartiteness or
not and in the first case exhibit a partition satisfying the conclusion of
Lemma~\ref{lem:almostBipBalanced}. Unfortunately we cannot do this in polynomial
time but not all is lost. Instead, we will show that there is a $0 < c' < c^4$
and a polynomial time algorithm that either proves that $G[V_i]$ is $c'$-far
from bipartiteness or proves that $G[V_i]$ is $c$-close to bipartiteness and
exhibits a partition which satisfies the conclusion of Lemma~\ref{lem:almostBipBalanced}. If it so happens that $G$ is both $c'$-far from and $c^4$-close to bipartiteness then there is no control as to which of the two possible outcomes will appear. To see how this can be done we apply the Regularity Lemma to $G[V_i]$ for some appropriate parameters. It is well known that the partition guaranteed by the Regularity Lemma can be found in polynomial time~\cite{AlDuLeRoYu:AlgorithmicRegularity}. If the reduced graph is not bipartite (this can be checked in constant time) then the counting lemma shows that $G[V_i]$ is far from bipartite. If on the other hand the reduced graph is bipartite then it is immediate that $G[V_i]$ must be close to bipartite. It remains to show how to exhibit a bipartition satisfying the conclusions of Lemma~\ref{lem:almostBipBalanced}. From the reduced graph we can exhibit a partition $A'\dcup B'$ of $G[V_i]$ that satisfies~\eqref{eq:TR}. If every vertex has at least as many neighbours in the opposite part rather than its own part then by Remark~\ref{rem:17} the partition has the required properties. If this was not the case then we move one such vertex to the opposite part and repeat the process. This process has to end (in polynomially many steps) as after each move the number of edges between the two parts strictly increases.

For step~(C) we have already noted that there is an algorithmic version of the
Regularity Lemma~\cite{AlDuLeRoYu:AlgorithmicRegularity}. There are however two
issues that need to be addressed. The first one is that for our proof of
Theorem~\ref{thm:RobustOKbip} it was important that the partition given by the
Regularity Lemma was a refinement of the partition $A\dcup B$ of the vertex set. The
statement of the algorithmic version of the Regularity Lemma
in~\cite{AlDuLeRoYu:AlgorithmicRegularity} does not deal with this issue. From
the proof of the statement however it is immediate that we can start with any
such prepartition. The second issue is that the algorithmic version of the
Regularity Lemma in~\cite{AlDuLeRoYu:AlgorithmicRegularity} is not stated in
the degree form. The usual argument used to deduce the degree form from the standard form is algorithmic provided one knows which pairs are $\eps$-regular.
In principle, it is not easy to check algorithmically whether a pair is
$\eps$-regular or not and in fact the algorithmic proof of the Regularity Lemma
does not say which pair are $\eps$-regular and which are not. It does however
produce a big enough (but possibly) incomplete list of $\eps$-regular pairs and
this is enough for our purpose of constructing a graph of regular pairs $G'$.
The graphs $R_1,R_2,R_1',R_2'$ in the proof of Theorem~\ref{thm:RobustOKbipfar} can now be
easily constructed algorithmically. It is also well-known that there is a
polynomial-time algorithm for finding a maximum matching and so the matching $M$
of $R_1'$ can be constructed. The next step in our proof of
Theorem~\ref{thm:RobustOKbipfar} is an application of
Lemma~\ref{lem:Super-regular} in order to make the pairs corresponding to the
matching $M$ super-regular. We only stated Lemma~\ref{lem:Super-regular} as an
existence result but in the proof of the result one removes from each cluster the
$\eps m$ vertices which have the smallest degree inside its neighbouring cluster
in $M$. Thus this can also be done algorithmically. Finally, we have already
given an algorithmic proof of Lemma~\ref{lem:ideal} and so the exceptional sets
$U_i$ can be constructed in polynomial time.

For step~(D) we observe that the fat or bifat pairs can be easily recognized and so the auxiliary graph $H$ can be constructed efficiently. The global connections in this step are based either on a spanning tree (in the non-bipartite case), or on an Eulerian circuit (in the bipartite case) in $H$. Since $H$ is bounded these can be found in a bounded number of steps. The large matchings between the fat or bifat pairs can also be found in polynomial time and the matching $M$ of Claim~\ref{AC:MatchingEdges} (or the corresponding matching in the bipartite case) is constructed from these matchings greedily.

For step~(E), the system of paths is constructed from the paths
$P_1,\ldots,P_{\ell}$ using the Blow-up Lemma. An algorithmic version of the
Blow-up Lemma appears in~\cite{KoSaSz:BlowUpAlgo}. For the construction of $P_1$
first note that the clusters $U_1,W_1,\ldots,U_r,W_r$ were chosen greedily according to some restrictions. At each step it is easy to verify which clusters are not allowed to be chosen. To complete the construction of $P_1$ we need to construct some auxiliary paths $Q_i$. Each such path was arising from a path $Q_i'$ which was the shortest path in a subdigraph of $R^*$. The digraph $R^*$ and also the set of vertices of $R^*$ which $Q_i'$ is not allowed to pass can be constructed efficiently and hence so can the path $Q_i'$. It is now immediate how to construct the path $Q_i''$ in $R$. In the construction $Q_i = p_1q_1r_1s_1 \cdots p_tq_tr_ts_t$ whenever we were choosing $p_i$ either the choice was already predetermined or we could efficiently obtain a list of allowed vertices to choose as $p_i$. So we can choose $p_i$ greedily from this list and we can also do the same with the choice of $s_i$. Finally, for the choices of $q_i$ and $r_i$ we can again efficiently construct lists of available vertices for $q_i$ and for $r_i$. We want the choice to be such that $q_ir_i$ is an edge and this can be done greedily. The other paths $P_2,\ldots,P_{\ell}$ are constructed in a similar way.

Finally, step~(F) is just putting steps~(D) and~(E) together.


\section{Acknowledgments}
The idea of using the LP-duality in conjunction with the Regularity
Lemma originated in discussion of JH with Dan Kr\'al' and Diana Piguet on
another (yet unpublished) project.

We thank Peter Allen, Michael Krivelevich, L\'aszl\'o Lov\'asz, Igor Pak,  L\'aszl\'o
Pyber, and Bal\'azs Szegedy for useful discussions, and Deryk Osthus for
carefully reading an earlier version of this manuscript.

Finally we thank two of the anonymous referees for their very detailed comments.  
\bibliographystyle{plain} 
\bibliography{bibl}
\end{document}